\documentclass[11pt,reqno]{amsart}

\usepackage[T1]{fontenc}
\usepackage{lmodern}
\usepackage{microtype}
\usepackage[margin=1.12in]{geometry}
\usepackage{amsmath,amssymb,amsfonts,mathtools}
\usepackage{mathrsfs}
\usepackage{enumitem}
\usepackage{booktabs}
\usepackage{array}
\usepackage{tikz-cd}
\usepackage{xcolor}
\usepackage{aliascnt}
\usepackage{hyperref}
\usepackage[nameinlink,capitalise,noabbrev]{cleveref}
\usepackage{url}
\usepackage{longtable}
\usepackage{tikz}
\usepackage{float}
\usetikzlibrary{arrows.meta,shapes.geometric}
\hypersetup{
  colorlinks=true,
  linkcolor=blue!45!black,
  citecolor=green!35!black,
  urlcolor=blue!50!black,
}

\allowdisplaybreaks
\setlist[enumerate]{leftmargin=2.2em}
\setlist[itemize]{leftmargin=2.0em}

\newtheorem{theorem}{Theorem}[section]

\newaliascnt{proposition}{theorem}
\newtheorem{proposition}[proposition]{Proposition}
\aliascntresetthe{proposition}

\newaliascnt{lemma}{theorem}
\newtheorem{lemma}[lemma]{Lemma}
\aliascntresetthe{lemma}

\newaliascnt{corollary}{theorem}
\newtheorem{corollary}[corollary]{Corollary}
\aliascntresetthe{corollary}

\newaliascnt{remark}{theorem}
\newtheorem{remark}[remark]{Remark}
\aliascntresetthe{remark}

\newaliascnt{definition}{theorem}
\newtheorem{definition}[definition]{Definition}
\aliascntresetthe{definition}

\newaliascnt{example}{theorem}

\aliascntresetthe{example}

\newaliascnt{warning}{theorem}

\aliascntresetthe{warning}

\newcommand{\Sp}{\mathrm{Sp}}
\newcommand{\Mod}{\mathrm{Mod}}
\newcommand{\Comod}{\mathrm{Comod}}
\newcommand{\GrAb}{\mathrm{GrAb}}
\newcommand{\Spec}{\operatorname{Spec}}
\newcommand{\Tor}{\operatorname{Tor}}
\newcommand{\ann}{\operatorname{ann}}
\newcommand{\res}{\operatorname{res}}
\newcommand{\im}{\operatorname{im}}
\newcommand{\MFG}{\mathcal M_{\mathrm{FG}}}
\newcommand{\bbT}{\mathbb T}
\newcommand{\bbZ}{\mathbb Z}

\newcommand{\tensor}{\otimes}
\newcommand{\lessdotp}{\mathrel{\lessdot_p}}
\newcommand{\Max}{\operatorname{Max}}

\DeclareMathOperator{\Hom}{Hom}

\DeclareMathOperator{\Ext}{Ext}

\title[Equivariant Landweber exactness]{An Equivariant Landweber Exact Functor Theorem for Abelian Compact Lie Groups}
\author{Yingxin Li}
\address{School of Mathematical Sciences, Nankai University, Tianjin, China}
\email{yingxinli@nankai.edu.cn}

\subjclass[2020]{Primary 55N22; Secondary 55N91, 55P91, 14L05, 57R85}
\keywords{equivariant Landweber exactness, equivariant formal group, equivariant Lazard ring, Euler class, exact pair of zero divisors}

\begin{document}

\begin{abstract}
    We prove an equivariant Landweber exact functor theorem for abelian compact Lie groups $G$. For a graded module $N$ over the $G$-equivariant Lazard ring $L_G$, we give necessary and sufficient algebraic conditions on $N$ for the functor
    \[
    X\longmapsto (MU_G)_*(X)\otimes_{L_G} N
    \]
    to define a homology theory on the category $\Sp^G$ of genuine $G$-spectra. Our criterion is obtained by varifying the Landweber exactness on each stratum indexed by a closed subgroup of $G$ and then gluing the local flatness data via Euler classes. As an application, we show that for any non-equivariant Landweber exact ring spectrum $E$, $MU_G\wedge_{MU} E$ is equivariantly Landweber exact, thereby proving a conjecture of Wisdom.
\end{abstract}

\maketitle
\tableofcontents

\section{Introduction}
Given a graded formal group law over a commutative graded ring $R$, whose underlying formal group is classified by a morphism $\Spec R\to\MFG$, it is natural to ask whether it can be realized by a complex-oriented ring spectrum $E$ with the prescribed formal group law and natural isomorphisms
\[
E_*(X)\cong  MU_*(X)\tensor_{ MU_*}R.
\]
The classical Landweber exact functor theorem (LEFT) \cite{landweber1973associated,landweber1976homological} provides an explicit algebraic criterion ensuring that the functor $MU_*(-)\otimes_{MU_*} R$ defines a homology theory. A homology theory obtained in this way is called Landweber exact.

For an abelian compact Lie group $G$, Cole, Greenlees, and Kriz introduced $G$-equivariant formal group laws \cite{cole2000equivariant} and showed that every $G$-equivariant complex-oriented ring spectrum determines a $G$-formal group law. These notions provide a framework for equivariant chromatic homotopy theory, whose foundations---including the coefficient ring of $ MU_G$, the Balmer spectrum of finite $G$-spectra, and the moduli stack $\MFG^G$ of equivariant formal groups---have been studied in \cite{greenlees2001equivariant,cole2000equivariant,strickland2011multicurves,hanke2018equivariant,barthel2020balmer,hu2021equivariant,hausmann2022global,hausmann2023invariant,wisdom2024properties,behrens2024periodic,groenjes2026homology}.
In particular, Hausmann \cite{hausmann2022global} proved the equivariant analogue of Quillen's theorem, identifying the $G$-equivariant Lazard ring $L_G$ with the coefficient ring of $ MU_G$:
\[
L_G\xrightarrow{\;\cong\;}\pi_*^G MU_G.
\]
Then it natural to ask the equivariant version of the realization problem for a $G$-equivariant formal group law over some $L_G$-algebra $R$.

Let $N$ be a graded $L_G$-module. We say that $N$ is \emph{$G$-equivariant Landweber exact} if the functor
\[
h_N\colon \Sp^G\longrightarrow\GrAb, \qquad X\longmapsto  MU_*^G(X)\otimes_{L_G}N
\]
defines a homology theory. A $G$-equivariant formal group law $\mathbb G$ over a commutative graded ring $R$, classified by a graded ring homomorphism $L_G\to R$, is called \emph{Landweber exact} if $R$ is equivariantly Landweber
exact as an $L_G$-module. A complex-oriented $G$-ring spectrum $E$ is called \emph{$G$-equivariant Landweber exact} if there are natural isomorphisms
\[
E_*^G(X)\cong  MU_*^G(X)\otimes_{L_G}\pi_*^G E
\]
for all genuine $G$-spectra $X$. Examples of equivariantly Landweber exact spectra, including $KU_G$ and $BP_G$, are studied in \cite{wisdom2024properties}. However, there is currently no criterion to determine whether an equivariant formal group law arises from a equivariant complex-oriented ring spectrum. In this paper, we prove an equivariant Landweber exact functor theorem for abelian compact Lie groups $G$ that provides an algebraic criterion for equivariant Landweber exactness.

In the nonequivariant setting, the classical Landweber exact functor theorem states that a graded $MU_*$-module $N$ is Landweber exact if and only if the sequence $(p,v_1,v_2,\dots)$ is regular on $N$ for every prime $p$. More recently, Hausmann and Meier \cite{hausmann2023invariant} studied the flat Hopf algebroid $(L_G,\Gamma_G)$ classifying $A$-equivariant formal group laws and strict isomorphisms, identified its associated stack with the moduli stack $\MFG^A$, and constructed equivariant analogues of the classical $v_n$-elements, with chromatic height data encoded by height functions. However, these lifts do not directly yield an equivariant analogue of the classical regular-sequence criterion, as they are annihilated by powers of Euler classes; see \cite[Remark 6.7]{hausmann2023invariant}. Thus, the additional equivariant information cannot be encoded simply by arranging these lifts into a single global regular sequence.

Rather than seeking a single global regular sequence, we establish the flatness of an $L_G$-module $N$ over $\MFG^G$ by gluing its flatness over the Euler-localized substack $\MFG^{G/G}$ with the flatness of $N/e_\chi N$ over the closed substack $\MFG^{\ker(\chi)}$ for each nontrivial character $\chi\in G^\vee$. Here, $e_\chi$ denotes the Euler class associated with $\chi$. A key ingredient in this gluing argument is the notion of an exact pair of zero divisors. Following Henriques and \c{S}ega \cite{bonacho2011free}, a pair $(a,b)$ of elements in a commutative ring $S$ is called an \emph{exact pair of zero divisors} if
\[
\operatorname{ann}_S(a)=bS \qquad\text{and}\qquad \operatorname{ann}_S(b)=aS.
\]
In our setting, we construct a system of such exact pairs on $L_G$ from suitable Euler classes and their companion elements. These pairs give rise to two-periodic free resolutions and, provided that they remain exact on the modules under consideration, yield $\Tor$-vanishing analogous to that associated with non-zero divisors. This vanishing plays a central role in the gluing argument.

\subsection{Statement of main results}
Let $G=\bbT^m\times A$ be an abelian compact Lie group, where $\bbT=S^1$ denotes the circle group and $A$ is a finite abelian group. Let $L_G$ denote the $G$-equivariant Lazard ring and $(L_G,\Gamma_G)$ the $G$-equivariant Lazard Hopf algebroid. For a graded $L_G$-module $N$ and each closed subgroup $H\leq G$, set
\[
N_H=L_H\tensor_{L_G}N,
\qquad
\Phi^H N=N_H[e_\chi^{-1}\mid 1\neq\chi\in H^\vee].
\]
For each prime $p$, we use the subscript $(-)_{(p)}$ to denote $p$-localization.

We first consider the case $m=0$, so that $G=A$ is finite abelian. For each pair of subgroups $K<H\leq A$ with $H/K\cong C_p$ for some prime $p$, choose an identification $H/K\cong C_p\subseteq\bbT$ and let $u\colon H\to\bbT$ be the resulting quotient character. Set
\[
e_{H/K}=e_u,
\qquad
d_{H/K}=\operatorname{Tr}_K^H(1).
\]
By extending $u$ to a non-torsion character $\widetilde u\colon\widetilde H\to\bbT$ of a suitable rank $1$ abelian compact Lie group $\widetilde H$, we obtain a presentation
\[
L_H\cong L_{\widetilde H}/(\widetilde e_{H/K}\widetilde d_{H/K}),
\]
where $\widetilde e_{H/K}$ and $\widetilde d_{H/K}$ are suitable lifts of $e_{H/K}$ and $d_{H/K}$, such that $\widetilde e_{H/K}\widetilde d_{H/K}=e_{p\widetilde u}$ is a nonzero divisor. Using this construction, we prove that $(e_{H/K},d_{H/K})$ is an exact pair of zero divisors on $L_H$. For each subgroup $H\leq A$, we assemble these exact pairs over all maximal subgroups $K<H$ and apply \Cref{lem:finite-gluing} to establish the flatness of $N_H$ over $L_H$ by gluing the flatness over $L_K$ for all maximal $K<H$, together with the flatness over $\Phi^H L$. These gluing data admit a concrete geometric interpretation; see \Cref{lem:maximal-euler-suffices,lem:closed-basechange,rmk:geometric meaning}.

By induction on the order of $A$, we obtain the following equivariant Landweber exact functor theorem for finite abelian groups.

\begin{theorem}[\Cref{thm:finite equivariant-LEFT}]\label{Introthm:finite}
Let $A$ be a finite abelian group, and let $N$ be a graded $L_{A}$-module.  The following are equivalent.
\begin{enumerate}[label=\textup{(\roman*)}]
\item The functors $N\tensor_{L_{A}} MU_*^A(-)$ form a homology theory.
\item $-\tensor_{L_{A}}N$ is exact on graded $(L_{A},\Gamma_{A})$-comodules.
\item For every prime $p$, every $K<H\leq A$ with $H/K\cong \bbZ/p$,
\[
\ker(e_{H/K}:N_{H,(p)}\to N_{H,(p)})=d_{H/K}N_{H,(p)}, \qquad \ker(d_{H/K}:N_{H,(p)}\to N_{H,(p)})=e_{H/K}N_{H,(p)},
\]
and, for every $H\leq A$, and every $n\geq0$,
\[
  v_{p,n}:\Phi^H N_H/I_{p,n}\Phi^H N \longrightarrow \Phi^H N_H/I_{p,n}\Phi^H N
\]
is injective.
\end{enumerate}
If $N=R$ is an evenly graded $L_{A}$-algebra, these conditions are also equivalent to flatness of $\Spec R\to\MFG^A$.
\end{theorem}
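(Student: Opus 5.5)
The proof establishes (i)$\Leftrightarrow$(ii), then (ii)$\Leftrightarrow$(iii) by induction on $|A|$, and finally the stacky reformulation.

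\textbf{(i)$\Leftrightarrow$(ii).} I follow the classical pattern.

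For (ii)$\Rightarrow$(i): since $(L_A,\Gamma_A)$ is a flat Hopf algebroid, $MU^A_*(X)$ is naturally a graded $(L_A,\Gamma_A)$-comodule, and a cofiber sequence of genuine $A$-spectra gives a long exact sequence of comodules. Exactness of $-\otimes_{L_A}N$ on comodules therefore preserves long exact sequences. Wedge and suspension axioms are automatic, including suspension by representation spheres via Thom isomorphisms for $MU_A$.

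For (i)$\Rightarrow$(ii): I realize enough comodules topologically. Every comodule is a filtered colimit of comodules that are finitely presented over $L_A$, and $\Gamma_A$-comodule exact sequences can be detected on sequences of the form $MU^A_*(X)$ for finite $X$. Concretely, I use cofibers of Euler-class maps $S^0\to S^{V}$ and of self-maps realizing $v_{p,n}$, in the style of the Landweber filtration argument. Exactness is then reduced to the vanishing of $\Tor_1^{L_A}(M,N)$ for the generating comodules $M$.

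\textbf{(ii)$\Leftrightarrow$(iii).} This is the core, and I prove it by induction on $|A|$, working one prime at a time after $p$-localization. I reformulate (ii) as flatness of $N$ over the stack $\MFG^A$, i.e.\ $\Tor_{>0}^{L_A}(M,N)=0$ for all comodules $M$.

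The stratification has two pieces.
\begin{itemize}
\item The open piece $\MFG^{A/A}$ is the Euler-localized part. There, $\Phi^A L$ carries a nonequivariant-type formal group law, and the classical Landweber theorem (with the Hausmann--Meier $v_{p,n}$) shows that flatness of $\Phi^A N$ is equivalent to regularity of $(p,v_{p,1},\dots)$.
\item The closed pieces are cut out by the Euler classes $e_\chi$. By \Cref{lem:maximal-euler-suffices} it suffices to use $e_{A/K}$ for maximal $K<A$. By \Cref{lem:closed-basechange}, $L_A/e_{A/K}$ relates to $L_K$, so $N/e_{A/K}N$ is controlled by $N_K$, which is flat by induction.
\end{itemize}

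The two pieces are combined by \Cref{lem:finite-gluing}. Its key input is that the exact pair $(e_{A/K},d_{A/K})$ gives a two-periodic free resolution
\[\cdots\xrightarrow{e}L_A\xrightarrow{d}L_A\xrightarrow{e}L_A\to L_A/e\to0.\]
The kernel conditions in (iii) then say exactly that $\Tor_{>0}^{L_A}(L_A/e_{A/K},N)=0$. With this vanishing, the standard short exact sequences
\[0\to e M\to M\to M/eM\to 0,\]
together with $M\to M[e^{-1}]$, reduce $\Tor$-vanishing for general comodules to the open and closed strata.

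\textbf{Necessity.} For (ii)$\Rightarrow$(iii) I run the argument backwards. Flatness is stable under the base changes $L_A\to L_H$ and $L_H\to\Phi^H L$. The comodule $L_H/e_{H/K}$ then forces the $\Tor_1$ and $\Tor_2$ vanishing, which is precisely the pair of kernel identities. The $v_{p,n}$-injectivity follows from the classical theorem applied to $\Phi^H N$.

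\textbf{Stacky reformulation.} For an evenly graded algebra $R$, flatness of $\Spec R\to\MFG^A$ is equivalent to (ii), since Hausmann--Meier identify the stack of $(L_A,\Gamma_A)$ with $\MFG^A$.

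\textbf{Main obstacle.} I expect the hardest step to be the gluing. The issue is to show that $\Tor$-vanishing on each stratum, together with exactness of the pairs on $N_H$, implies global $\Tor$-vanishing for \emph{all} comodules, not just cyclic ones. The difficulty is that $e_{A/K}$ is a zero divisor, so the usual Koszul or regular-sequence argument is unavailable. My approach is to use the lifts $\widetilde e\,\widetilde d=e_{p\widetilde u}$ in $L_{\widetilde H}$, which is a nonzero divisor. This lets me transfer the question to a setting where the ordinary Euler-class-regular gluing of the torus case applies.
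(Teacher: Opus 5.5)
Your overall architecture matches the paper's: induction on $|A|$, the open stratum handled by the classical theorem applied to $\Phi^A N$, closed strata by induction, $\Tor$-vanishing from the two-periodic resolution, and necessity via base change and invariant ideals. The gap is the gluing step, which you yourself flag as the main obstacle. \Cref{lem:finite-gluing} concludes ring-theoretic flatness, so it cannot be applied to $N$ itself. $N$ need not be flat over $L_A$, and $N/e_iN$ need not be flat over $L_{K_i}$; induction only makes $N_{K_i}$ flat over $\MFG^{K_i}$. The paper instead glues $M_A=N\otimes_{L_A,\eta_L}\Gamma_A$ with its $\eta_R$-module structure. This needs three inputs:
\begin{itemize}
\item $(e_i,d_i)$ is exact on $M_A$ for the $\eta_R$-action. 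This uses flatness of $\Gamma_A$ plus invariance of \emph{both} $(e_i)$ and $(d_i)$ (\Cref{prop:d-invariant}).
\item $M_A/e_iM_A\cong M_{K_i}$ by \Cref{lem:closed-basechange}, which is flat by induction.
\item $M_A[\Delta_A^{-1}]$ is flat over $\Phi^AL$ by (LE).
\end{itemize}

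In your comodule formulation, the sequence $0\to eM\to M\to M/eM\to0$ is not enough; you also need $0\to M[e]\to M\xrightarrow{e}eM\to0$. Once $\Tor^{L_A}_{>0}$ vanishes on comodules killed by $e$ (change of rings from $\Tor^{L_A}_{>0}(L_K,N)=0$, plus induction), the two sequences give injections $\Tor_1(M,N)\hookrightarrow\Tor_1(eM,N)\hookrightarrow\Tor_1(M,N)$. Their composite is multiplication by $e$, so $e$ acts injectively on $\Tor_1$. Inverting $\Delta_A$ then kills $\Tor_1$, exactly as in \Cref{lem:compact-infinite-gluing}. So the zero-divisor difficulty is already resolved by the exact pair, since \Cref{lem:one-gluing} needs only $\Tor$-vanishing, not regularity.

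The fix you propose does not supply this. Over $L_{\widetilde H}$ the module $N$ is annihilated by the non-zero-divisor $e_{p\widetilde u}$, so it is not flat over $\MFG^{\widetilde H}$ and there is nothing regular to glue. The factorization $\widetilde e\,\widetilde d=e_{p\widetilde u}$ is needed elsewhere: it proves that $(e,d)$ is an exact pair on $L_H$ itself (\Cref{prop:universal-edge}). You assume that fact without proof when you write down the periodic resolution.

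Two further points are missing. First, (iii) gives exactness of $(e_{H/K},d_{H/K})$ on $N_{H,(p)}$ only when $[H:K]=p$, but gluing over $L_{H,(p)}$ uses every maximal subgroup. For $[H:K]=q\neq p$ you need \Cref{lem:automatic-edge}. There $d_{H/K}$ restricts to $q$ in $L_K=L_H/(e_{H/K})$, so $a e_{H/K}+b d_{H/K}=1$ in $L_{H,(p)}$, and the pair is exact on every module.

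Second, your route to (i)$\Rightarrow$(ii) is not available. It presupposes an equivariant Landweber filtration of $(L_A,\Gamma_A)$-comodules, which is not established. It also presupposes finite complexes with $v_{p,n}$ self-maps realizing the filtration quotients, and complexes with homology $L/I_{p,n}$ do not exist in general even nonequivariantly. \Cref{prop:homology-implies-flat} avoids both. It realizes an arbitrary map $a\colon F\to F'$ of finitely generated free $L_A$-modules as a map of free $MU_A$-modules and applies $h^N$ to the cofiber. From that it reads off the equational criterion for flatness of $N\otimes_{L_A}\Gamma_A$.
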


For a general abelian compact Lie group $G=\bbT^m\times A$, the closed substacks $\MFG^{\ker(\chi)}\subseteq\MFG^G$ associated with non-trivial characters $\chi\in G^\vee$ form an infinite family whenever $m>0$. We therefore extend the gluing argument to infinite families; see \Cref{lem:compact-infinite-gluing}. In this setting, we treat the torus and finite factors separately, using the fact that the Euler class $e_\tau$ is a non-zero divisor in $L_G$ for every nontrivial character $\tau\in(\bbT^m)^\vee$. Applying the extended gluing argument and proceeding by induction on the pair $(m,|A|)$ in lexicographic order, we obtain the following equivariant Landweber exact functor theorem for abelian compact Lie groups.

\begin{theorem}[Equivariant LEFT for abelian compact Lie groups, \Cref{thm:compact-LEFT}]\label{introthm:compact-LEFT}
Let $G\cong T\times A$, where $T=\bbT^m$ and $A$ is finite. For a graded $L_G$-module $N$, the following are equivalent:
\begin{enumerate}[label=\textup{(\roman*)},leftmargin=3em]
\item The functors $ N\tensor_{L_{G}} MU_*^G(-) $ form a homology theory.
\item The functor
\[
N\otimes_{L_G}- \colon \operatorname{Comod}_{(L_G,\Gamma_G)} \longrightarrow\operatorname{Mod}_R
\]
is exact.
\item $N$ satisfies the conditions as following
\begin{itemize}[leftmargin=3em]
  \item[$(\mathrm{RE})$] For any $r\leq m$ and any linearly independent $\{\tau_i\}_r \subset T^\vee \subset G^\vee$, the sequence $(e_{\tau_1},\cdots, e_{\tau_r})$ is a regular sequence over $N$.
  \item[$(\mathrm{EU})$] For every $p$ and every $K\lessdot_p H\leq A$, $(\pi^* e_{H/K},\pi^* d_{H/K})$ is an exact pair of zero divisors on $N_{T\oplus H,(p)}$ via the pullback along $\pi^*\colon T\oplus H \to H$.
  \item[$\mathrm{(LE)}$] For every prime $p$, every closed subgroup $B\leq G$ and every $n\geq0$, multiplication by $v_{p,n}$ is injective on $\Phi^B N_B/I_{p,n}\Phi^B N_B$.
\end{itemize}
\end{enumerate}
\end{theorem}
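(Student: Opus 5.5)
The proof would follow the same three-step pattern as \Cref{Introthm:finite}, with the finite gluing lemma replaced by its infinite counterpart \Cref{lem:compact-infinite-gluing}.

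\textbf{(i)$\Leftrightarrow$(ii).} This is formal. The Hopf algebroid $(L_G,\Gamma_G)$ is flat, and $MU^G_*(X)$ is naturally a graded comodule for every genuine $G$-spectrum $X$. So (ii) implies that $N\otimes_{L_G}MU^G_*(-)$ carries cofiber sequences to long exact sequences. It also commutes with sums and suspensions, including representation spheres, since $MU_G$ is complex oriented and Thom isomorphisms hold; hence it is a homology theory.

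For the converse we use that every finitely presented comodule is built from comodules of the form $MU^G_*(X)$ with $X$ finite, by taking cokernels of maps between such comodules. This is the standard Landweber--Hovey argument, using Hausmann's identification $L_G\cong\pi^G_*MU_G$ and $\Gamma_G=\pi^G_*(MU_G\wedge MU_G)$. Exactness on all comodules then follows by filtered colimits.

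Thus the content of the theorem is (ii)$\Leftrightarrow$(iii): flatness of $N$ over $\MFG^G$ is characterized by (RE), (EU) and (LE).

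\textbf{(ii)$\Rightarrow$(iii).} Each condition is a $\Tor$-vanishing against a suitable comodule, so it should follow from exactness.
\begin{itemize}
\item (RE): $L_G/(e_{\tau_1},\dots,e_{\tau_r})\cong L_{\ker}$ for the kernel of $(\tau_1,\dots,e_{\tau_r})$, and these Euler classes form a regular sequence on $L_G$ itself. The Koszul complex on this sequence is therefore a comodule resolution, and tensoring it with $N$ stays exact.
\item (EU): base change along the closed substack $\MFG^{T\times H}$ preserves flatness. The two-periodic resolution of $L_{T\times K}$ over $L_{T\times H}$ coming from the exact pair $(\pi^*e_{H/K},\pi^*d_{H/K})$ is a complex of comodules, and exactness of it after tensoring with $N_{T\times H,(p)}$ is precisely the exact-pair condition.
\item (LE): the open substack $\MFG^{B/B}$, obtained by inverting the Euler classes, is equivalent to a nonequivariant moduli of formal groups over $\Phi^B L$. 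Flatness restricts to it, and the classical LEFT then gives regularity of $(p,v_1,\dots)$.
\end{itemize}

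\textbf{(iii)$\Rightarrow$(ii).} We argue by induction on $(m,|A|)$ in lexicographic order; the base case $m=0$ is \Cref{thm:finite equivariant-LEFT}.

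Suppose $m>0$. The plan is to apply \Cref{lem:compact-infinite-gluing} with the family of nontrivial characters $\tau\in T^\vee$, whose Euler classes are nonzero divisors on $L_G$, together with the finite exact pairs from $A$. The gluing lemma should reduce flatness of $N$ to three inputs:
\begin{enumerate}
\item flatness of $N/e_\tau N$ over $L_{\ker\tau}$ for each primitive $\tau$;
\item flatness of $N_{T\times K}$ for $K\lessdot_p H$;
\item flatness of the Euler-localized module $\Phi^G N$.
\end{enumerate}
Here $\ker\tau\cong\bbT^{m-1}\times A'$ with $|A'|$ possibly larger than $|A|$, but it is still smaller in the lexicographic order, so the induction hypothesis applies. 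We must check that (RE), (EU) and (LE) descend to $N/e_\tau N$. For (RE), extend $\tau$ to a basis: regularity of $e_\tau$ together with any independent $\tau_i$ lets us pass to the quotient. For (EU), we need that the quotient by a regular element $e_\tau$ preserves exact pairs, via the $\Tor$ long exact sequence. For (LE), note that $\Phi^B$ of the quotient is $\Phi^B N_B$ for $B\leq\ker\tau$. Input (iii) is (LE) for $B=G$ plus the classical LEFT.

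\textbf{Main obstacle.} Two points carry the difficulty.
\begin{itemize}
\item \emph{Torsion in $\ker\tau$.} When $\tau$ is not primitive relative to the decomposition, $\ker\tau$ acquires new finite torsion not visible in $A$. Its exact pairs $(e,d)$ then come from pulling back along a different splitting $\ker\tau\cong T'\times A'$. To handle this, I would first show that (EU) for $G$, combined with (RE), implies (EU) for every splitting. The tool is the lifting presentation $L_H\cong L_{\widetilde H}/(\widetilde e\,\widetilde d)$, with $\widetilde e\,\widetilde d=e_{p\widetilde u}$ a regular element handled by (RE).
\item \emph{Convergence of the infinite gluing.} We must check that the colimit and compactness hypotheses of \Cref{lem:compact-infinite-gluing} are met. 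Here I expect that every finitely presented comodule is supported on finitely many strata.
\end{itemize}
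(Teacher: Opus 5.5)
Your outline of (ii)$\Leftrightarrow$(iii) follows the paper's route. The paper also runs a lexicographic induction on $(\dim,|\pi_0|)$ and applies \Cref{lem:compact-infinite-gluing} to Euler classes of infinite-order and prime-order characters, whose inversion gives $\Phi^G L$. Your ``main obstacle'' is the content of \Cref{lem:compact exactness condition}, which the paper proves essentially as you suggest, via a factorization $e_{p\lambda}=e_\lambda\psi_p$ of a regular element. The paper proves that lemma once for all closed $H\le G$, and then inducts on flatness of $M_H=N_H\otimes_{L_H}\Gamma_H$ over closed subgroups; this avoids re-verifying (RE)+(EU)+(LE) for each new splitting. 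Note that the gluing is applied to $M_G(N)$, not to $N$, with $M_G/e_\tau M_G\cong M_{\ker\tau}$ from \Cref{lem:closed-basechange}. Your convergence worry is unfounded: \Cref{lem:compact-infinite-gluing} has no finiteness hypothesis, because each $f_\alpha$ acts injectively on $\Tor_1^B(-,M)$ while $S^{-1}\Tor_1^B(-,M)=0$.

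There are two concrete gaps.

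\textbf{(i)$\Rightarrow$(ii).} Your cokernel argument does not justify this step. The presentation of finitely presented comodules by $MU^G_*(X)$'s is not established for $(L_G,\Gamma_G)$. Even granting it, the comodule maps involved need not be induced by maps of $G$-spectra. Moreover, exactness on cofibre sequences only controls geometrically realized sequences, not $\Tor_1^{L_G}(N,C)$ for an arbitrary $C$. The paper's \Cref{prop:homology-implies-flat} instead does the following:
\begin{itemize}
\item it realizes a map $a\colon F\to F'$ of finitely generated free $L_G$-modules by a map of free $MU_G$-modules;
\item it uses $\eta_R$-flatness of $\Gamma_G$ to get a surjection $MU^G_{*+1}(C_a)\twoheadrightarrow\Gamma_G\otimes_{L_G}\ker(a)$;
\item it deduces flatness of $N\otimes_{L_G}\Gamma_G$ from the equational criterion.
\end{itemize}

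\textbf{Descent of (RE).} To apply your induction hypothesis to $T\times K$ with $K\lessdot_p A$, you need (RE) on $N_{T\times K}=N/\pi^*e_{A/K}N$. This is a quotient by a zero divisor, where regularity does not descend in general, and the new-torsion cases need the same thing. The missing idea is \Cref{lem:compact-exact-regular}. If $(e,d)$ is exact on $M$, then $d$ induces an injection $M/eM\hookrightarrow M$, so $M$-regular elements stay regular on $M/eM$. Exactness of $(e,d)$ also survives quotienting by a regular element, which lets you iterate along whole sequences.

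\textbf{A smaller point.} The Koszul and two-periodic complexes you use in (ii)$\Rightarrow$(iii) are not complexes of comodules. Multiplication by $e_\tau$ or $d$ is not a comodule map; only the ideals they generate are invariant. Argue instead via $\Tor$-vanishing against the comodules $L_G/(e)$, computed by those free resolutions, or via short exact sequences of invariant ideals as the paper does.
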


As a corollary, we obtain a criterion for $G$-equivariant Landweber exactness of a complex-oriented $G$-ring spectrum $E$.
\begin{corollary}[\Cref{cor:compact representation}]\label{introcor:spectra level representation}
  Let $E$ be a complex oriented $G$-spectrum $E$. Then 
  \[
  E_*(X) \cong (MU_G)_*(X)\otimes_{L_G} \pi_*^G E
  \]
  if and only if $\pi_*^G E$ is landweber exact, and for any $H\leq G$, $\pi_*^H E \cong L_H\otimes_{L_G} \pi_*^G E$.
\end{corollary}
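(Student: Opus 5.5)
The plan is to reduce the corollary to \Cref{introthm:compact-LEFT} via the natural comparison map of homology theories. Since $E$ is complex oriented, the orientation gives a ring map $MU_G\to E$. This induces a natural transformation
\[
\theta_X\colon (MU_G)_*(X)\otimes_{L_G}\pi_*^G E\longrightarrow E_*^G(X),
\qquad [x]\otimes a\mapsto a\cdot \phi_*(x),
\]
which is compatible with suspension isomorphisms. Evaluating on the orbits $X=\Sigma^\infty_+ G/H$ and using the Wirthmüller/induction isomorphisms, the source becomes $\pi_*^H MU_G\otimes_{L_G}\pi_*^G E\cong L_H\otimes_{L_G}\pi_*^G E$ by Hausmann's theorem $L_H\cong\pi_*^H MU_H$. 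The target becomes $\pi_*^H E$. Under these identifications, $\theta_{G/H}$ is precisely the base-change map $L_H\otimes_{L_G}\pi^G_*E\to\pi^H_*E$.

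For the forward direction, assume $E^G_*(X)\cong (MU_G)_*(X)\otimes_{L_G}\pi_*^G E$ naturally. Then the left side is a homology theory, so condition (i) of \Cref{introthm:compact-LEFT} holds for $N=\pi_*^G E$, and $N$ is Landweber exact. Evaluating at $X=G/H_+$ gives $\pi_*^H E\cong L_H\otimes_{L_G}\pi_*^G E$.

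One subtlety here: the given isomorphism need not a priori be $\theta$. To handle this, I would either interpret the hypothesis as the canonical map being an isomorphism, which is the paper's intended meaning for complex-oriented $E$, or note that any natural isomorphism of $L_G$-linear functors restricted to orbits still yields abstract isomorphisms, which is all the statement requires.

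For the converse, assume $N=\pi_*^G E$ is Landweber exact and the base-change maps are isomorphisms. By \Cref{introthm:compact-LEFT}, $h_N(X)=(MU_G)_*(X)\otimes_{L_G}N$ is a homology theory on $\Sp^G$, and $\theta\colon h_N\to E^G_*$ is a natural transformation of homology theories. By hypothesis, $\theta$ is an isomorphism on every suspension $\Sigma^n G/H_+$, with $n\in\bbZ$. The main step is the standard cellular induction:
\begin{itemize}
\item For finite $G$-CW spectra, the five lemma applied to cofiber sequences gives the isomorphism.
\item Both theories commute with filtered homotopy colimits: $MU_G$-homology does, and tensoring preserves colimits. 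So $\theta$ is an isomorphism on all $G$-CW spectra.
\item Since the genuine $G$-spectra are generated under colimits by the $\Sigma^n G/H_+$ for closed $H\le G$, $\theta$ is an isomorphism on all of $\Sp^G$.
\end{itemize}

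The main obstacle I expect is the identification of $\theta_{G/H}$ with the algebraic base-change map. This requires checking that the restriction $L_G\to L_H$ used to define $N_H$ agrees with the map $\pi^G_*MU_G\to\pi^H_*MU_G$ under Hausmann's isomorphism, compatibly with the $\pi^G_*E$-module structure on $\pi^H_*E$. I would verify this using naturality of the equivariant formal group law under restriction of the orientation, as in \cite{cole2000equivariant,hausmann2022global}.
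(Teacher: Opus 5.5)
Your argument is correct, and it takes a different route from the paper. The paper gives no separate proof of \Cref{cor:compact representation}. Its finite-group analogue \Cref{cor:spectra level representation} is deduced from the LEFT together with \Cref{cor:representation}. That is, Brown representability is applied to the $RO(G)$-graded theory $X\mapsto (MU_G)_0(S^{-V}\wedge X)\otimes_{L_G}N$, producing some spectrum $E'$ with $\pi_*^HE'\cong L_H\otimes_{L_G}N$. This route gives existence of a representing spectrum for every Landweber exact $N$, independently of any given $E$. By itself, however, it never compares $E'$ with the given $E$. For that comparison one needs a map inducing the isomorphisms on $\pi_*^H$ for all closed $H$.

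That map is exactly what you construct. You take the ring map $MU_G\to E$ coming from the orientation (universality of $MU_G$ among complex-oriented $G$-ring spectra for abelian $G$) and form the comparison transformation $\theta$. You identify $\theta$ on orbits with the base-change map, then run the localizing-subcategory argument. This bypasses Brown representability entirely and yields the canonical isomorphism. The price is that you must read the hypothesis $\pi_*^HE\cong L_H\otimes_{L_G}\pi_*^GE$ as saying that the canonical base-change map is an isomorphism. An abstract isomorphism would not feed the five-lemma argument, and you correctly flag this. You must also check that Hausmann's isomorphism is compatible with restriction, as you note.

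One small correction: for positive-dimensional $G/H$ the Wirthm\"uller isomorphism introduces a degree shift,
\[
\pi_n^G(F\wedge G/H_+)\cong\pi^H_{n-\dim(G/H)}F.
\]
Here the tangent representation of $G/H$ is trivial because $G$ is abelian. The same shift occurs on both sides of $\theta_{G/H_+}$, so it does not affect your argument.
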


Finally, we apply this criterion to show that the $G$-equivariant extension $E_G$ of a nonequivariant Landweber exact theory $E$ is itself equivariantly Landweber exact. This proves a conjecture of Wisdom \cite[Conjecture 2.18]{wisdom2024properties}.
\begin{proposition}[\Cref{prop:conjecture-in-Wisdom}]\label{introprop:conjecture-in-Wisdom}
  If $E$ is a Landweber exact spectrum, then $MU_A \wedge_{MU} E$ is an $G$-Landweber exact spectrum.
\end{proposition}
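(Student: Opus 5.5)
We apply \Cref{introcor:spectra level representation} to $E_G:=MU_G\wedge_{MU}E$. Two things must be checked: that $\pi_*^G E_G$ is Landweber exact in the sense of \Cref{introthm:compact-LEFT}, and that $\pi_*^H E_G\cong L_H\otimes_{L_G}\pi_*^G E_G$ for every closed $H\leq G$.

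\textbf{Step 1: the coefficients.} We first compute the coefficients. Write $E\simeq MU\wedge_{MU}E$ and use the non-equivariant Landweber exactness of $E$ over $MU_*$. The $MU$-module structure on $MU_G$ comes from the map $MU\to MU_G$ (inflation, or the unit of the underlying non-equivariant ring). For each $H$ we get a K\"unneth/Tor spectral sequence
\[
\Tor^{MU_*}_{*}(\pi_*^H MU_G,\,E_*)\Rightarrow \pi_*^H E_G .
\]
Landweber exactness of $E_*$ over $MU_*$ means that $E_*$ is flat over $\MFG$. The module $\pi^H_*MU_G=L_H$ is a comodule over $(MU_*,MU_*MU)$ via the $MU$-action, so the higher Tor terms vanish (the classical LEFT flatness statement for comodules). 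Hence
\[
\pi_*^H E_G\cong L_H\otimes_{MU_*}E_*\cong L_H\otimes_{L_G}(L_G\otimes_{MU_*}E_*),
\]
which is exactly the base-change condition of the corollary, with $N:=L_G\otimes_{MU_*}E_*$.

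\textbf{Step 2: the criteria for $N$.} Next we verify (RE), (EU) and (LE) for $N$. The clean route is to avoid checking them by hand and instead use condition (ii) of \Cref{introthm:compact-LEFT}. We must show that $N\otimes_{L_G}M=M\otimes_{MU_*}E_*$ is exact in the $(L_G,\Gamma_G)$-comodule $M$. To do this, observe that every $(L_G,\Gamma_G)$-comodule restricts, along the map of Hopf algebroids $(MU_*,MU_*MU)\to(L_G,\Gamma_G)$ induced by $MU\to MU_G$, to an $(MU_*,MU_*MU)$-comodule. This restriction is exact, since the underlying modules are unchanged. The functor $-\otimes_{MU_*}E_*$ is exact on $MU_*MU$-comodules by classical Landweber exactness. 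Composing the two gives exactness of $N\otimes_{L_G}-$ on $\Comod_{(L_G,\Gamma_G)}$, so $N$ is Landweber exact by \Cref{introthm:compact-LEFT}. (Alternatively, one can check (RE)/(EU) directly: the Euler classes and the exact pairs $(e_{H/K},d_{H/K})$ live in $L_G$, and flat base change over $MU_*$ along comodule directions preserves regular sequences and exact pairs. Condition (LE) follows because $\Phi^B L_B$ is, after localization, a polynomial-type extension of $MU_*$ that retains the $v_{p,n}$.)

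\textbf{Main obstacle.} The hardest step is making the comodule-restriction argument precise. We need a genuine map of Hopf algebroids $(MU_*,MU_*MU)\to(L_G,\Gamma_G)$, i.e.\ a compatible choice of how $\Gamma_G=\pi_*^G(MU_G\wedge MU_G)$ receives $MU_*MU$. We also need to know that the $L_G$-modules $L_H$, and more generally arbitrary $\Gamma_G$-comodules, are $MU_*MU$-comodules for which classical LEFT exactness applies. One subtlety is that the underlying-spectrum map $MU\to MU_G$ is the geometric inclusion of $MU$ as the trivial-action theory, and it induces $MU_*\to L_G$ classifying the $G$-formal group law's underlying (non-equivariant) formal group law. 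I would first verify that this underlying formal-group-law construction is functorial for strict isomorphisms, which gives the Hopf algebroid map. I would then use Hausmann--Meier's identification of $(L_G,\Gamma_G)$ with $\MFG^G$ to see that the forgetful map $\MFG^G\to\MFG$ is the corresponding map of stacks. Flatness of $\Spec E_*\to\MFG$ then pulls back to flatness of $\Spec N\to\MFG^G$, since $N$ is the pullback of $E_*$ along this map.
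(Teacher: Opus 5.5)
Your overall plan matches the paper's: identify $\pi_*^H(MU_G\wedge_{MU}E)$ with $L_H\otimes_L E_*$, show that $N=L_G\otimes_L E_*$ is Landweber exact, and invoke the spectrum-level corollary. The gap is in Step 2, which carries all the content.

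\textbf{Step 2 runs in the wrong direction.} A map of Hopf algebroids $(L,\Gamma)\to(L_G,\Gamma_G)$ lets you base-change $(L,\Gamma)$-comodules to $(L_G,\Gamma_G)$-comodules via $C\mapsto L_G\otimes_L C$, i.e.\ pullback along $\MFG^G\to\MFG$. It does not let you regard an $(L_G,\Gamma_G)$-comodule $M$ as an $(L,\Gamma)$-comodule on the same underlying module. For that you would need to push the coaction $M\to\Gamma_G\otimes_{L_G}M$ into $\Gamma\otimes_L M$, i.e.\ a map $\Gamma_G\to\Gamma\otimes_L L_G$; you only have $\Gamma\otimes_L L_G\to\Gamma_G$.

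Your closing sentence makes the same error. $\Spec N=\Spec E_*\times_{\Spec L}\Spec L_G$ is not the pullback $\Spec E_*\times_{\MFG}\MFG^G$. It maps to that pullback by a base change of the forgetful map $\Spec L_G\to\Spec L\times_{\MFG}\MFG^G$, which remembers only the coordinate on the completion at the identity. That map is not an isomorphism: over $\Phi^GL$ the equivariant formal group is a disjoint union of copies indexed by $G^\vee$. An equivariant coordinate $y_\epsilon$ also records independent unit data on the non-identity copies. Flatness of $\Spec N\to\MFG^G$ along your lines would need flatness of this forgetful map, which is no easier than the statement being proved. So your ``clean route'' assumes its conclusion, as your own ``main obstacle'' paragraph half-acknowledges.

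\textbf{The fallback needs freeness, not flatness.} Your parenthetical names the right conditions, but ``flat base change over $MU_*$'' is not available, because $E_*$ is not flat over $L$. The missing input, which is how the paper proves \Cref{cor:ordinary-implies-equivariant}, is that $L_K$ is free over $L$ for every closed $K\le G$. This gives the following:
\begin{itemize}
\item[(RE)] The sequences $0\to Q_i\xrightarrow{e_{\tau_{i+1}}}Q_i\to Q_{i+1}\to0$, with each $Q_i\cong L_K$ for a suitable closed $K$, split over $L$. So they stay exact after $-\otimes_L E_*$.
\item[(EU)] The two-periodic complex of $(e_{H/K},d_{H/K})$ on $L_{T\times H}$ has $L$-projective kernels. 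Indeed $L_{T\times H}/(e)\cong L_{T\times K}$ is free, so $(e)\cong L_{T\times H}/(d)$ is projective. Hence the complex is split exact over $L$ and survives $-\otimes_L E_*$.
\item[(LE)] This follows from flatness of $\Phi^BL$ over $L$.
\end{itemize}

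\textbf{Step 1 needs the same freeness.} The $MU$-action makes $L_H$ only an $MU_*$-module, not an $(MU_*,MU_*MU)$-comodule, so Landweber exactness of $E_*$ alone does not kill $\Tor^{MU_*}_{>0}(L_H,E_*)$. Freeness of $L_H$ over $L$ does; alternatively, cite May's computation as the paper does. The K\"unneth spectral sequence itself is fine, for instance via $(MU_G\wedge_{MU}E)^H\simeq(MU_G)^H\wedge_{MU}E$.
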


Our results provide a method for constructing $G$-equivariant Landweber exact spectra from $G$-equivariant formal group laws satisfying our criterion. In particular, we hope that this construction will be useful in the study of equivariant elliptic cohomology. More broadly, our gluing strategy may also contribute to the development of a derived theory of equivariant formal groups.

    \subsection{Outline}
    In \Cref{sec:background}, we recall the necessary background on equivariant formal groups and the moduli stack $\MFG^G$. In \Cref{sec:finite}, we first establish the equivalence between the Landweber exactness of a spectrum and algebraic Landweber exactness for certain evenly graded $G$-ring spectra $E$. We then prove the finite gluing lemma (\Cref{lem:finite-gluing}) and verify that $(e_{H/K},d_{H/K})$ is an exact pair on $L_H$ in the finite abelian case. Then we prove \Cref{Introthm:finite}. In \Cref{sec:compact}, we extend the gluing lemma to infinite families (\Cref{lem:compact-infinite-gluing}) and use it to prove \Cref{introthm:compact-LEFT} for abelian compact Lie groups. Finally, we deduce \Cref{introcor:spectra level representation} and \Cref{introprop:conjecture-in-Wisdom} as consequences.

    \subsection*{Acknowledgements}
    I would like to thank my advisors, Xiangjun Wang and Xu-an Zhao, for their guidance throughout this project.

\section{Background of equivariant formal groups}\label{sec:background}
In this section, we briefly review some background on equivariant formal groups. We refer the reader to \cite{strickland2011multicurves,hausmann2023invariant} for a more detailed discussion. 

Let $G$ be an abelian compact Lie group. A $G$-equivariant formal group over $S$ consists of a commutative group object $X$ in the category of formal $S$-schemes together with a group homomorphism $$\phi:S\times G^*\to X$$ such that $X$ is (fpqc locally) the formal neighborhood of the divisor $[\phi(G^*)]$. That is, 
    \begin{itemize}
        \item[\quad (1)] for the composite $\phi_\epsilon:S\overset{id_S\times \epsilon}{\longrightarrow}S\times G^*\overset{\phi}{\longrightarrow}X$, the ideal $I_\epsilon=\ker(R\to k)$ of the induced map is (fpqc locally) on $k$ a free $R$ module of rank $1$ where $R=\mathcal O_X$.
    \item[\quad (2)] The topology on $R$ is generated by finite product of the ideals $I_\alpha=\ker \phi_\alpha$, i.e. $X=\operatorname{Spf}(R)$ where $R=\lim_m R/I^m$ for $I=\prod_{\alpha\in G^*} I_\alpha$.
    \end{itemize}
In studying Landweber exactness, it suffices to consider coordinatizable formal groups over affine schemes. In what follows, we therefore assume that $S=\Spec R$ for some commutative ring $R$. Let $y_\epsilon\in R=\mathcal O_X$ be a function on $X$ whose vanishing locus is the divisor $[\phi(\epsilon)]$. For each $\tau\in G^*$, define the associated Euler class by
\[
e_\tau=\phi_\tau^*(y_\epsilon),
\]
where $\phi_\tau=\phi(\tau)$.  We write
\[
L_G=\pi_*^G MU_G,
\qquad
\Gamma_G=\pi_*^G(MU_G\wedge MU_G).
\]
For each prime $p$, the subscript $(p)$ denotes $p$-localization, that is, tensoring over $\bbZ$ with $\bbZ_{(p)}$. For example,
\[
L_{G,(p)}=L_G\tensor_{\bbZ}\bbZ_{(p)},
\qquad
\Gamma_{G,(p)}=\Gamma_G\tensor_{\bbZ}\bbZ_{(p)}.
\]
Both $L_G$ and $\Gamma_G$ are concentrated in even degrees. Unless otherwise specified, graded rings are understood to be graded-commutative, and flatness refers to flatness of the underlying ungraded module.

The equivariant Lazard ring $L_G$ carries the universal $G$-equivariant formal group law. Moreover, Hausmann and Meier \cite[Proposition~3.2 and Corollary~3.7]{hausmann2023invariant} identify $(L_G,\Gamma_G)$ with the graded Hopf algebroid classifying $G$-equivariant formal group laws and strict isomorphisms. In particular, $\Gamma_G$ is flat over $L_G$ via either unit map. We denote the left and right unit maps of a Hopf algebroid by $\eta_L$ and $\eta_R$, respectively. Let $\MFG^G$ denote the moduli stack of $G$-equivariant formal groups, which is the stack associated with the flat Hopf algebroid $(L_G,\Gamma_G)$. 

\begin{definition}
Let $\alpha\colon H\to G$ be a continuous homomorphism of abelian compact Lie groups, and let $\mathbb G=(H^*\xrightarrow{\phi}X)$ be an $H$-equivariant formal group. The \emph{corestriction} $\alpha_*\mathbb G$ is the $G$-equivariant formal group obtained by completing $X$ along the image of the composite
\[
G^*\xrightarrow{\alpha^*}H^*\xrightarrow{\phi}X.
\]
\end{definition}

This construction induces a homomorphism of equivariant Lazard rings
\[
\alpha^*\colon L_G\longrightarrow L_H.
\]
If $\alpha$ is the inclusion of a closed subgroup, then $\alpha^*$ is surjective, with kernel generated by the Euler classes $e_V$ for all $V\in\ker(G^*\to H^*)$. In particular, if $H=\ker(V)$ for a character $V\colon G\to\bbT$, then there is an exact sequence
\[
L_G\xrightarrow{\,e_V\,}L_G\longrightarrow L_H\longrightarrow 0.
\]
If $V$ is non-torsion, then $e_V$ is a non-zero divisor in $L_G$.

On the other hand, when $q:G\to G/H$ is surjective, we can construct $G$-formal groups of pushout type from $G/H$-equivariant formal groups.
\begin{definition}
    Let $q:G\to G/H$ be a surjective group homomorphism and let $\mathbb G=((G/H)^*\to X)$ be a $(G/H)$-equivariant formal group. We define the coinduction $\alpha^* G$ to be the $G^*$ equivariant group $$(G^*\to X\times_{(G/H)^*}G^*=\sqcup_{\alpha\in G^*/(G/H)^*} X).$$ 
\end{definition}

\begin{proposition}{\cite[Proposition 3.11]{hausmann2023invariant}}\label{prop:substacks}
    Let $H\leq G$ be a closed subgroup of an abelian compact Lie group, and write $\alpha\colon H\hookrightarrow G$ and $q\colon G\to G/H$ for the inclusion and quotient homomorphisms, respectively. 
    \begin{enumerate}[label=\textup{(\roman*)}]
      \item $\alpha_*\colon\MFG^H\to\MFG^G$ is a closed immersion identifying $\MFG^H$ with the common vanishing locus of the Euler classes $e_V$ for all $V\in\ker(G^*\to H^*)$.
      \item $q^*\colon\MFG^{G/H}\to\MFG^G$ is an open immersion identifying $\MFG^{G/H}$ whose image is the common non-vanishing locus of the Euler classes $e_V$ for all $V\notin \im(H^*\to G^*)$.
    \end{enumerate}
\end{proposition}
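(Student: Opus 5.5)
The plan is to work entirely on the level of the flat Hopf algebroids $(L_G,\Gamma_G)$ and to argue via functors of points on coordinatized equivariant formal groups. Recall that a closed substack of the stack of a flat Hopf algebroid $(L,\Gamma)$ corresponds to an \emph{invariant} ideal $I\subseteq L$, meaning $\eta_L(I)\Gamma=\eta_R(I)\Gamma$. The substack is then presented by $(L/I,\Gamma/I\Gamma)$. An open substack is cut out, on $\Spec L$, by the non-vanishing of an invariant family of elements.

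For (i), I would first take $I=(e_V\mid V\in\ker(G^*\to H^*))$, which is the kernel of the surjection $\alpha^*\colon L_G\to L_H$ recalled above. The first step is to show $I$ is invariant. A strict isomorphism $f\colon\mathbb G\to\mathbb G'$ of $G$-equivariant formal groups carries the coordinate $y_\epsilon$ to $y'_\epsilon$ times a unit, since both cut out the same divisor $[\phi(\epsilon)]$. Hence $\eta_R(e_V)=u_V\,\eta_L(e_V)$ for a unit $u_V\in\Gamma_G$, which gives invariance.

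Next I would check that a $G$-formal group $(\phi\colon G^*\to X)$ over $R$ lies in $V(I)$ if and only if it is a corestriction. If $e_V=0$ for every $V$ in the kernel, then $\phi(V)$ lies in the zero divisor, so $\phi(V)=\epsilon$. Thus $\phi$ factors through $G^*/\ker=H^*$, using that $G^*\to H^*$ is surjective because $H$ is closed. Completing along the image of $H^*$ or of $G^*$ gives the same formal scheme, so $\mathbb G\cong\alpha_*\mathbb G_H$ for a unique $H$-formal group $\mathbb G_H$. The converse is immediate. Doing the same argument for pairs of formal group laws with a strict isomorphism identifies $\Gamma_G/I\Gamma_G\cong\Gamma_H$. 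Hence $\alpha_*$ is a closed immersion with the stated image.

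For (ii), I would show that $q^*$ identifies $\MFG^{G/H}$ with the full substack of $G$-formal groups for which $e_V$ is a unit for all $V\notin(G/H)^*\subseteq G^*$. One direction is clear: in the coinduced group $\bigsqcup_{G^*/(G/H)^*}X$, the points $\phi(V)$ for $V\notin(G/H)^*$ lie in components disjoint from the zero section, so $e_V$ is invertible. Conversely, if all such $e_V$ are units, then the ideals $I_V$ and $I_\epsilon$ are comaximal whenever $V\notin(G/H)^*$. By the Chinese remainder theorem, the completion $R=\lim R/I^m$ then splits as a product over cosets of $G^*/(G/H)^*$. Translation by $\phi$ identifies all the factors with the component $X_0$ through the identity, and $X_0$ is the formal neighbourhood of $\phi((G/H)^*)$. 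Therefore $(G/H)^*\to X_0$ is a $(G/H)$-formal group whose coinduction recovers $\mathbb G$. Running the same argument on morphisms gives full faithfulness, so $q^*$ is a monomorphism onto this locus.

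The step I expect to be the main obstacle is \emph{openness} in (ii) when $G^*/(G/H)^*$ is infinite, i.e.\ when $H$ has positive dimension. There the locus is a priori an infinite intersection of non-vanishing loci, and such an intersection need not be open. My first attempt would be to show that it suffices to invert $e_V$ for $V$ ranging over a finite set of coset representatives generating $G^*/(G/H)^*$ (this quotient is finitely generated as a quotient of $G^*$). The idea is to use the formal group law expansion $e_{V+W}=e_V+_F e_W$ in the form $e_{V+W}\equiv e_V\cdot(\text{unit})\bmod e_W$, together with the splitting into components once finitely many of these are units. If that fails, I would fall back on checking openness fpqc-locally and presenting $\MFG^{G/H}$ by the localization $L_G[e_V^{-1}]$, verifying directly that this localization equals $L_{G/H}$ tensored appropriately.
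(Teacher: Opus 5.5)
The paper gives no proof of this statement (it is quoted from Hausmann--Meier), so I am judging your argument on its own terms. Part (i) is fine. In (ii), your identification of the essential image of $q^*$ with the full substack where $e_V$ is a unit for every $V\notin (G/H)^*$ is also fine; you rightly read the index set this way, and the $\im(H^*\to G^*)$ in the statement is a typo.

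\textbf{The gap is openness in (ii), and your proposed repair fails.}

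\emph{The problem is not confined to $\dim H>0$.} The family $\{e_V\mid V\notin(G/H)^*\}$ is infinite whenever $\dim G>0$ and $H\neq 1$, because each nontrivial coset of $(G/H)^*$ is then infinite.

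\emph{Finitely many coset representatives do not suffice.} Inverting $e_V$ for a finite set of representatives does not invert the remaining Euler classes, because invertibility of $e_V$ does not propagate to $e_{nV}$. Take the multiplicative $\mathbb{T}$-equivariant formal group over $R(\mathbb{T})=\mathbb{Z}[t^{\pm1}]$ coming from $KU_{\mathbb{T}}$; there $e_n=1-t^n$ up to units. Base change along $t\mapsto\zeta_p\in\mathbb{Q}(\zeta_p)$ makes $e_1$ a unit while $e_p=0$. This defeats your reduction both for $H=\mathbb{T}$ and, taking $p=3$, for $H=C_2$ with representative $V=1$. Equivalently, in $e_{p\tau}=\psi_p e_\tau$ the factor $\psi_p$ need not be a unit where $e_\tau$ is. The congruence $e_{V+W}\equiv e_V \bmod e_W$ is true, but it only controls $e_{V+W}$ modulo $e_W$. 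Since $e_W$ is not nilpotent, this produces no units.

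\emph{Your fallback does not give openness either.} Presenting the image by $L_G$ with infinitely many Euler classes inverted yields a flat monomorphism, not an open immersion.

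\textbf{In fact openness cannot be proved in this generality, because it is false.} In the same example, pull back the image of $q^*$ along $\Spec\mathbb{Z}[t^{\pm1}]\to\MFG^{\mathbb{T}}$, for $H=\mathbb{T}$ (or for $H=C_2$, restricting to odd $n$). The result is $\bigcap_n D(1-t^n)$. Its complement is the union of the infinitely many distinct irreducible curves cut out by the cyclotomic polynomials. Such a union is not closed in the noetherian scheme $\Spec\mathbb{Z}[t^{\pm1}]$. So (ii), read literally, is correct when the relevant family of Euler classes is finite (e.g.\ $G$ finite, or $H=1$), but it fails already for $G=\mathbb{T}$ with $H=C_2$ or $H=\mathbb{T}$.

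\textbf{What your argument does establish in general.} It shows that $q^*$ is a monomorphism identifying $\MFG^{G/H}$ with the substack presented by the localization of $(L_G,\Gamma_G)$ at all $e_V$ with $V\notin(G/H)^*$. This localization is unambiguous because, as you showed, $\eta_L(e_V)$ and $\eta_R(e_V)$ differ by a unit. That weaker statement is all the paper actually uses. For finite groups the localization is at the single element $\Delta_H$ (\Cref{lem:maximal-euler-suffices}). In the compact Lie case, \Cref{lem:compact-infinite-gluing} only needs flatness of $S^{-1}M$ over $S^{-1}B$, never openness.
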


\section{Landweber exactness for finite abelian groups}\label{sec:finite}

Let $(B,\Gamma)$ be a flat Hopf algebroid and let $f\colon B\to R$ be a ring homomorphism. Hovey-Strickland \cite[Lemma 2.2]{hovey2005comodules} show that the functor
\[
R\otimes_B -\colon \Comod_{B,\Gamma}\longrightarrow \Mod_R
\]
is exact if and only if the map $f\otimes \eta_R$ endows $R\otimes_B\Gamma$ with the structure of a flat $B$-algebra. We say that$R$ is algebraically Landweber exact over $(B,\Gamma)$ if it satisfies these equivalent conditions.

Now let $E$ be an $G$-ring spectrum and let $R$ be a $\pi_*^G E$-module. If $R$ is algebraically Landweber exact over the Hopf algebroid
\[
\bigl(\pi_*^G E,\pi_*^G(E\wedge E)\bigr),
\]
then the functor
\[
X\longmapsto
R\otimes_{\pi_*^G E}E_*^G(X)
\]
defines a homology theory on $\Sp^G$. For suitable flat $G$-ring spectra $E$, we first prove the converse, which is a generalization of \cite{rudyak1986exactness}: if this functor defines a homology theory on $\Sp^G$, then $R$ is algebraically Landweber exact over $\bigl(\pi_*^G E,\pi_*^G(E\wedge E)\bigr)$.

\begin{proposition}\label{prop:homology-implies-flat}
Let $E$ be an associative genuine homotopy commutative $G$-ring spectrum. Put
\[
  L=\pi_*^GE, \qquad \Gamma=\pi_*^G(E\wedge E),
\]
and suppose $L$ is even and $\Gamma$ is flat over $L$ through $\eta_R$. Let $N$ be a graded $L$-module.  If
\[
  h_n^N(X)=\bigl(N\tensor_LE_*^G(X)\bigr)_n
\]
forms a homology theory on $\Sp^G$, then $N\tensor_{L,\eta_L}\Gamma$ is flat over $L$ through $\eta_R$.
\end{proposition}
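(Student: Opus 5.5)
We adapt Rudyak's argument for the converse of Landweber exactness. The aim is to show that for every graded $L$-module $M$ (equivalently, every injection $M'\hookrightarrow M$) the functor $M\mapsto (N\otimes_L\Gamma)\otimes_{\eta_R,L}M$ is exact. The key point is to realize $N\otimes_L\Gamma\otimes_L M$ as the value of the homology theory $h^N$ on a spectrum built from $M$.

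\textbf{Step 1: the case of free modules.} First treat a free graded $L$-module $F=\bigoplus L[n_i]$, realized by the wedge of suspensions $X_F=\bigvee \Sigma^{n_i}E$. Since $E\wedge E$ has homotopy $\Gamma$, and $\Gamma$ is flat over $L$ via $\eta_R$, the Künneth-type identification gives
\[
E_*^G(E\wedge X_F)\cong \Gamma\otimes_{L}F .
\]
More generally, for any $E$-module spectrum $Y$ with $\pi_*^GY$ projective over $L$, we get $E_*^G(Y)\cong\Gamma\otimes_{\eta_R,L}\pi_*^G Y$. This holds for wedges of shifts of $E$, because $E\wedge\Sigma^nE$ has homotopy $\Sigma^n\Gamma$ and wedges commute with homotopy. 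Since $E_*^G(-)$ commutes with wedges, we do not need a general Künneth spectral sequence at this stage.

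\textbf{Step 2: a general module $M$.} Take a free resolution $\cdots\to F_1\to F_0\to M\to 0$. The map $F_1\to F_0$ is realized as an $E$-module map $X_{F_1}\to X_{F_0}$, which is possible because maps of free $E$-modules are determined by $\pi_*^G$. Let $Y$ be its cofiber. The long exact sequence in $\pi_*^G$ shows that $\pi_*^GY$ is an extension of $M$ by a shift of $\ker(F_1\to F_0)$; this is where evenness of $L$ is used to control degrees, as in the classical proof. It is cleaner to work with injections directly: given a monomorphism $M'\hookrightarrow M$, we want $N\otimes\Gamma\otimes M'\to N\otimes\Gamma\otimes M$ to be injective. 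Applying $E_*^G$ to the cofiber sequence $X_{F_1}\to X_{F_0}\to Y$ and using flatness of $\Gamma$ gives
\[
E_*^G(Y)\cong \Gamma\otimes_L\pi_*^GY .
\]
Because $h^N$ is a homology theory, it is exact on the cofiber sequence. Comparing with the right-exact sequence $N\otimes\Gamma\otimes F_1\to N\otimes\Gamma\otimes F_0\to N\otimes\Gamma\otimes M\to 0$, exactness of $h^N$ at the left-hand term forces
\[
\Tor_1^L\bigl(N\otimes_L\Gamma,\,M\bigr)\longrightarrow 0 .
\]
Concretely, the boundary map in $h^N$ takes the kernel of $N\otimes\Gamma\otimes F_1\to N\otimes\Gamma\otimes F_0$ into the image of $h^N_{*+1}(Y)$, which is in turn computed by $N\otimes\Gamma\otimes\pi_{*+1}Y$. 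An induction on a two-step argument, as in Rudyak (equivalently, Landweber's "reduce to cyclic modules"), then shows that $\Tor_1$ vanishes.

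\textbf{Main obstacle.} The hardest part is the bookkeeping in Step 2: identifying $\pi_*^GY$ and the boundary map, so that exactness of $h^N$ genuinely yields injectivity of $N\otimes\Gamma\otimes K\to N\otimes\Gamma\otimes F_1$ for $K=\ker(F_1\to F_0)$, rather than a statement only about $Y$. My first attempt is to use the standard trick of realizing the short exact sequence $0\to K\to F_1\to F_0$ inside the $E$-module category via the $E$-based Adams tower. I would then use flatness of $\Gamma$ via $\eta_R$ to get $E_*^G$ of each stage as $\Gamma\otimes(-)$.

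Homotopy commutativity ensures that $E\wedge X$ is a left $E$-module in the homotopy category. This is what identifies the $L$-module structures through $\eta_L$ versus $\eta_R$ correctly, and that is needed to conclude flatness over $L$ through $\eta_R$.
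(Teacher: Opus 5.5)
Your setup matches the paper's: realize a map $a\colon F_1\to F_0$ of free $L$-modules by a map of free $E$-modules, take the cofiber $Y$, apply $E^G_*$ and $h^N$, and use $\eta_R$-flatness of $\Gamma$. The gap is that the argument never closes. After Step 2 you hand the conclusion to an unspecified ``induction on a two-step argument'' (or an $E$-based Adams tower). In the main-obstacle paragraph you then name the injectivity of $N\tensor\Gamma\tensor K\to N\tensor\Gamma\tensor F_1$, with $K=\ker a$, as the goal. That is the wrong target. Since $\Tor_1^L(N\tensor\Gamma,F_1)=0$, the kernel of this map is
\[
\Tor_1^L(N\tensor\Gamma,\im a)\cong\Tor_2^L(N\tensor\Gamma,\operatorname{coker}a).
\]
Exactness of $h^N$ at $h^N_*(Y)$ does give you this injectivity. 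But $\Tor_2$-vanishing against all modules only says the flat dimension is at most $1$; it does not give flatness.

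What you need is exactness at $h^N_*(X_{F_1})$, which you pointed at in Step 2 but did not finish. It says $\ker(1\tensor 1\tensor a)=\im(N\tensor\partial)$, where $\partial\colon E^G_{*+1}(Y)\to\Gamma\tensor_{L,\eta_R}F_1$ is the boundary for $E^G_*$. By the $E^G_*$ long exact sequence and $\eta_R$-flatness, $\im\partial=\ker(1\tensor a)=\Gamma\tensor K$. So $\partial$ factors through a surjection $E^G_{*+1}(Y)\twoheadrightarrow\Gamma\tensor K$, which stays surjective after tensoring with $N$. Hence $\im(N\tensor\partial)$ is the image of $N\tensor\Gamma\tensor K$, and
\[
\ker(1\tensor1\tensor a)=\im\bigl(N\tensor\Gamma\tensor K\to N\tensor\Gamma\tensor F_1\bigr),
\]
which is exactly $\Tor_1^L(N\tensor_L\Gamma,\operatorname{coker}a)=0$. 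Every (finitely presented) module arises as such a cokernel, so flatness follows. This is the paper's whole argument; it restricts to finitely generated free $F,F'$ and finishes with the graded equational criterion. No induction, Adams tower, or identification $E^G_*(Y)\cong\Gamma\tensor_L\pi^G_*Y$ is needed. Your sentence about $h^N_{*+1}(Y)$ being computed by $N\tensor\Gamma\tensor\pi^G_{*+1}Y$ was one step from this, once you note that $\pi^G_{*+1}Y\twoheadrightarrow K$. Also, evenness of $L$ is used only to make $L$ honestly commutative so that no signs arise, not to control degrees.
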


\begin{proof}
Let $a:F\to F'$ be a degree-zero map between finite generated graded free $L$-modules and let $K=\ker(a)$.  Choose finite free left $E$-modules $\mathbb F_E = \bigvee_i \Sigma^{r_i} E$ and $\mathbb F'_E =\bigvee_j \Sigma^{s_j} E$ realizing the grading shifts and realize $a$ by an $E$-module map
\[
 \mu_a:\mathbb F_E\longrightarrow\mathbb F'_E.
\]
Indeed, for integer shifts $r,s$ the free-module adjunction gives $[\Sigma^rE,\Sigma^sE]_{\Mod_E(\Sp^G)}\cong\pi_{r-s}^GE$, each $(i,j)$-entry $a_{i,j}:\Sigma^{r_i} E \to \Sigma^{s_j} E$ can be realized independently.  Let $C_a$ be its cofiber, regarded also as a genuine $G$-spectrum.  Applying $E_*^G(-)$ gives
\[
 E_{*+1}^G(C_a)\longrightarrow  \Gamma\tensor_{L,\eta_R} F \xrightarrow{1\tensor a} \Gamma\tensor_{L,\eta_R}F'.
\]
Because $\Gamma$ is $\eta_R$-flat, the kernel of the second map is $\Gamma\tensor_{L,\eta_R}K$; hence the boundary factors through a surjection
\[
 E_{*+1}^G(C_a)\twoheadrightarrow \Gamma\tensor_{L,\eta_R}K.
\]
All maps are $L$-linear through $\eta_L$.

Apply the assumed homology theory to the same cofiber sequence.  The free terms become
\[
 h_*^N(\mathbb F_E)=N\tensor_{L,\eta_L}\Gamma\tensor_{L,\eta_R}F,
 \qquad
 h_*^N(\mathbb F'_E)=N\tensor_{L,\eta_L}\Gamma\tensor_{L,\eta_R}F'.
\]
Tensoring the preceding surjection with $N$ remains surjective.  Exactness of the homology sequence therefore identifies
\[
 \ker(1\tensor a:N\tensor_{L,\eta_L}\Gamma\tensor_LF\to N\tensor_{L,\eta_L}\Gamma\tensor_LF')
 =\im(N\tensor_{L,\eta_L}\Gamma\tensor_LK).
\]
Thus, for every degree-zero homomorphism $a\colon F\to F'$ between finitely generated graded free $L$-modules, with $K=\ker(a)$, the sequence
\[
N\tensor_{L,\eta_L}\Gamma\tensor_L K \longrightarrow N\tensor_{L,\eta_L}\Gamma\tensor_L F \xrightarrow{\,1\tensor a\,} N\tensor_{L,\eta_L}\Gamma\tensor_L F'
\]
is exact at the middle term. The result now follows from the graded analogue of the equational criterion for flatness (cf.~\cite[Lemma 10.39.11]{stacks-project}). Here, the evenness of $L$ ensures that its underlying ring is commutative and that no signs arise in the argument.
\end{proof}

\begin{proposition}\label{thm:first equivalent conditions}
Consider the $G$-equivariant complex cobordism spectrum $MU_G$. For a graded $L_G$-module $N$, the following are equivalent.
\begin{enumerate}
\item $\bigl(N\tensor_L MU_*^G(-)\bigr)_n$ is a homology theory.
\item $N\tensor_{L_G,\eta_L}\Gamma_G$ is flat over $L_G$ through $\eta_R$.
\item $-\tensor_LN$ is exact on graded $(L_G,\Gamma_G)$-comodules.
\end{enumerate}
\end{proposition}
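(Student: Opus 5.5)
We prove the cycle (1)$\Rightarrow$(2)$\Rightarrow$(3)$\Rightarrow$(1), taking $L=L_G$ and $\Gamma=\Gamma_G$ throughout.

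\emph{(1)$\Rightarrow$(2).} This follows from \Cref{prop:homology-implies-flat} applied to $E=MU_G$. We must check its hypotheses. $MU_G$ is a homotopy commutative (indeed $E_\infty$) genuine $G$-ring spectrum. $L_G=\pi^G_*MU_G$ is concentrated in even degrees. By the result of Hausmann--Meier recalled in \Cref{sec:background}, $\Gamma_G$ is flat over $L_G$ via either unit, in particular via $\eta_R$. The proposition then gives that $N\tensor_{L_G,\eta_L}\Gamma_G$ is flat over $L_G$ through $\eta_R$.

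\emph{(2)$\Leftrightarrow$(3).} This is the Hovey--Strickland criterion \cite[Lemma 2.2]{hovey2005comodules}, quoted at the start of \Cref{sec:finite}, applied to the flat Hopf algebroid $(L_G,\Gamma_G)$. That lemma is stated for an algebra $f\colon B\to R$, while here $N$ is only a module, so we recall why the argument goes through. For a comodule $M$ there is a natural isomorphism
\[
N\tensor_{L}M\cong (N\tensor_{L,\eta_L}\Gamma)\tensor_{L,\eta_R}\!\!\;_{\Box}\,M,
\]
coming from the cotensor identity $\Gamma\Box_\Gamma M\cong M$. Hence flatness of $N\tensor\Gamma$ over $\eta_R$ gives exactness on comodules. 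Conversely, every $L$-module $P$ gives an extended comodule $\Gamma\tensor_L P$, and
\[
N\tensor_L(\Gamma\tensor_L P)\cong (N\tensor_{L,\eta_L}\Gamma)\tensor_{L,\eta_R}P .
\]
Extension is exact because $\Gamma$ is flat. So exactness of $N\tensor_L-$ on comodules gives flatness. The graded setting needs no extra care, because $L_G$ and $\Gamma_G$ are even.

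\emph{(3)$\Rightarrow$(1).} For any genuine $G$-spectrum $X$, the group $MU_*^G(X)=\pi^G_*(MU_G\wedge X)$ is an $(L_G,\Gamma_G)$-comodule. The coaction is induced by $MU_G\wedge X\to MU_G\wedge MU_G\wedge X$ together with the Künneth isomorphism
\[
\pi_*^G(MU_G\wedge MU_G\wedge X)\cong \Gamma_G\tensor_{L_G}MU^G_*(X).
\]
This isomorphism holds because $MU_G\wedge MU_G$ is a free (or at least flat) $MU_G$-module, with $\Gamma_G$ flat. Cofiber sequences give long exact sequences of comodules, and exactness of $N\tensor_{L_G}-$ on comodules preserves them. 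Wedge axioms and suspension isomorphisms are clear, since tensor product commutes with direct sums.

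The main obstacle is justifying this Künneth/flat-comodule structure on $MU_*^G(X)$ for arbitrary genuine $G$-spectra $X$. The plan is to first establish the isomorphism for $X=MU_G$ and for $X=$ orbits $G/H_+$ and their suspensions. Here one uses that $\pi^G_*(MU_G\wedge G/H_+)=\pi_*^HMU_H=L_H$, and that $L_H$ is an $L_G$-comodule because $\MFG^H\to\MFG^G$ is a closed substack (\Cref{prop:substacks}). One then extends to all $X$ by a cell-induction/colimit argument using the five lemma, since both sides are homology theories in $X$: the right side is one because $\Gamma_G$ is flat over $L_G$.
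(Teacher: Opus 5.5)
Your proposal is correct and follows the paper's own argument: (1)$\Rightarrow$(2) by \Cref{prop:homology-implies-flat}, (2)$\Leftrightarrow$(3) by Hovey--Strickland, and (3)$\Rightarrow$(1) by applying exactness to the comodule-valued long exact sequence of a cofiber sequence. The one difference is that you justify the comodule structure on $MU_*^G(X)$, i.e.\ the K\"unneth isomorphism, which the paper takes for granted; what makes this work is the freeness of $MU_G\wedge MU_G$, or your orbitwise check $\Gamma_G\otimes_{L_G,\eta_R}L_H\cong\Gamma_H$ (which follows from \Cref{lem:closed-basechange} and invariance of the Euler ideals), not flatness of $\Gamma_G$ over $L_G$ alone.
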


\begin{proof}
The implication $(1)\Rightarrow(2)$ is
\cref{prop:homology-implies-flat}, and $(2)\Leftrightarrow(3)$ is
\cite[Lemma 2.2]{hovey2005comodules}.  If (3) holds, apply it to the comodule-valued long exact sequence of a cofiber sequence and then take homogeneous components. Suspension and coproduct axioms are inherited from $MU_*^G(-)$.
\end{proof}

Now it suffices to study the flatness of $N\tensor_{L_G,\eta_L}\Gamma_G$ over $L_G$. In the remainder of this section, let $A$ be a finite abelian group. We firstly consider the flatness for finite abelian groups, for which the geometric interpretation of the gluing argument is more transparent. In the next section, we extend the argument to general abelian compact Lie groups. Except for \Cref{lem:finite-gluing,lem:maximal-euler-suffices}, the results of this section remain valid for abelian compact Lie groups.

\subsection{The gluing lemma}
Let $B$ be a graded-commutative ring and $M$ a graded $B$-module. A pair of
homogeneous elements $(x,y)$ is an \emph{exact pair of zero divisors on $M$}
if
\[
 \ker(x:M\to M)=yM,
 \qquad
 \ker(y:M\to M)=xM.
\]
Equivalently, the complex 
\[
\cdots \longrightarrow M \overset{x}{\longrightarrow} M \overset{y}{\longrightarrow}  M \overset{x}{\longrightarrow} M \to M \overset{y}{\longrightarrow} M \longrightarrow \cdots
\]
is exact.

\begin{lemma}\label{lem:tor-exact-pair}
If $(x,y)$ is an exact pair on both $B$ and $M$, then
\[
 \Tor_j^B(B/xB,M)=0
 \qquad(j>0).
\]
\end{lemma}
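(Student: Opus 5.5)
The plan is to use the two-periodic free resolution of $B/xB$ that exactness of $(x,y)$ on $B$ provides, and then read off $\Tor$ from exactness of $(x,y)$ on $M$.

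\textbf{Step 1: the resolution.} Since $(x,y)$ is an exact pair on $B$, the complex
\[
\cdots \longrightarrow B \xrightarrow{\,y\,} B \xrightarrow{\,x\,} B \xrightarrow{\,y\,} B \xrightarrow{\,x\,} B \longrightarrow B/xB \longrightarrow 0
\]
is exact. The cokernel of the last $x$ is $B/xB$ by definition. Exactness at each interior $B$ is exactly $\ker(x)=yB$ and $\ker(y)=xB$ on $B$. In the graded setting we shift each copy of $B$ by the degrees of $x$ and $y$, so that all maps are homogeneous of degree zero. Since $x,y$ are homogeneous, $B$ is graded-commutative, and multiplication is $B$-linear, each map is a map of graded free $B$-modules. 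We can then drop the sign issue: if $x$ or $y$ has odd degree, left multiplication differs from a module map only by a sign, and signs do not affect kernels or images.

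\textbf{Step 2: tensor with $M$.} Tensoring the deleted resolution with $M$ and using $B\otimes_B M\cong M$ gives
\[
\cdots \longrightarrow M \xrightarrow{\,y\,} M \xrightarrow{\,x\,} M \xrightarrow{\,y\,} M \xrightarrow{\,x\,} M \longrightarrow 0 .
\]
The homology at the position of degree $j\ge 1$ is therefore either $\ker(x|_M)/yM$ or $\ker(y|_M)/xM$, depending on parity. Both vanish because $(x,y)$ is an exact pair on $M$. This gives $\Tor_j^B(B/xB,M)=0$ for $j>0$, with $\Tor_0=M/xM$.

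\textbf{Obstacle.} There is essentially none. The only point needing care is the bookkeeping with grading shifts and signs, so that the complex is genuinely a graded free resolution and $\Tor$ in the graded category agrees with the ungraded one. This holds because graded free modules are projective in both categories. The same argument with the roles of $x$ and $y$ swapped shows $\Tor_j^B(B/yB,M)=0$ as well, which is presumably also used later.
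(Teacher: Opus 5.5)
Your proposal is correct and is essentially the paper's proof: the exact pair on $B$ gives the two-periodic free resolution of $B/xB$, and after tensoring with $M$ the positive-degree homology is $\ker(x|_M)/yM$ or $\ker(y|_M)/xM$, both zero because the pair is exact on $M$. Your remarks on grading shifts and signs are harmless extra bookkeeping; in the paper's application $L_G$ is evenly graded, so no signs arise.
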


\begin{proof}
The exact pair on $B$ gives a free resolution
\[
  \cdots \overset{y}{\longrightarrow} B \overset{x}{\longrightarrow} B
  \overset{y}{\longrightarrow} B \overset{x}{\longrightarrow}B\longrightarrow B/xB\longrightarrow 0.
\]
After tensoring with $M$, the resulting complex is exact because $(x,y)$ is an
exact pair on $M$. Its positive homology groups are the asserted Tor groups.
\end{proof}

\begin{lemma}\label{lem:one-gluing}
Let $B$ be a commutative ring, $x\in B$, and $M$ a $B$-module. Assume
\begin{enumerate}
\item $\Tor_j^B(B/xB,M)=0$ for $j=1,2$;
\item $M/xM$ is flat over $B/xB$;
\item $M[x^{-1}]$ is flat over $B[x^{-1}]$.
\end{enumerate}
Then $M$ is flat over $B$.
\end{lemma}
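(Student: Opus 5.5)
Throughout, by the local criterion for flatness it suffices to show $\Tor_1^B(N,M)=0$ for every $B$-module $N$ (equivalently, for every finitely generated $N$, or for every $B/I$). The plan is to split an arbitrary $B$-module $N$ into its $x$-power torsion part and an $x$-torsion-free part, and to handle each with one of the three hypotheses.

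\emph{Modules killed by $x$.} If $xN=0$, then $N$ is a $B/xB$-module. Using hypothesis (1), the base-change spectral sequence
\[
\Tor_p^{B/xB}\bigl(\Tor_q^B(B/xB,M),N\bigr)\Rightarrow \Tor_{p+q}^B(M,N)
\]
has $\Tor_q^B(B/xB,M)=0$ for $q=1,2$. This gives an injection
\[
\Tor_1^{B}(M,N)\cong \Tor_1^{B/xB}(M/xM,N).
\]
Concretely, I would use the low-degree exact sequence, where $\Tor_2^{B/xB}$ and $\Tor_1^B(B/xB,M)$ enter. By (2) the right-hand side vanishes. So $\Tor_1^B(M,N)=0$ whenever $xN=0$, and likewise $\Tor_2^B(M,N)=0$. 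Only $q=1$ is needed for $\Tor_1$; keeping $q=2$ gives the $\Tor_2$ vanishing, which I would use in the next step.

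\emph{$x$-power torsion modules.} Induction along the filtration $0\subset N[x]\subset N[x^2]\subset\cdots$ gives vanishing of $\Tor_1^B(M,N)$ for $N$ killed by $x^n$. Each graded piece is killed by $x$. For general $x$-power torsion $N$, note that $N=\varinjlim N[x^n]$ and that Tor commutes with filtered colimits.

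\emph{$x$-torsion-free modules.} For $x$-torsion-free $N$ we have the exact sequence
\[
0\to N\to N[x^{-1}]\to N[x^{-1}]/N\to 0,
\]
whose cokernel is $x$-power torsion. We have
\[
\Tor_1^B(M,N[x^{-1}])=\Tor_1^{B[x^{-1}]}(M[x^{-1}],N[x^{-1}])=0
\]
by (3), since localization is flat. So $\Tor_1^B(M,N)$ is a quotient of $\Tor_2^B(M,N[x^{-1}]/N)$. This requires $\Tor_2$ vanishing on $x$-power torsion modules, which the first step and the same dévissage provide.

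\emph{General $N$.} The sequence $0\to\Gamma_x(N)\to N\to N/\Gamma_x(N)\to 0$ combines the two cases: the first term is $x$-power torsion and the last is $x$-torsion-free.

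The main obstacle is the bookkeeping of the first step. One must check that (1) really yields vanishing of both $\Tor_1^B$ and $\Tor_2^B$ against $B/xB$-modules, which uses $\Tor_2^{B/xB}(M/xM,N)=0$ from (2) together with $\Tor_1^B(B/xB,M)=\Tor_2^B(B/xB,M)=0$. I would verify this directly with a free $B$-resolution $P_\bullet\to M$. Hypothesis (1) says $P_\bullet/xP_\bullet$ is a $B/xB$-resolution of $M/xM$ up to degree $2$, and $P_\bullet\otimes_B N=(P_\bullet/xP_\bullet)\otimes_{B/xB}N$.
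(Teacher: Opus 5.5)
Your proof is correct. The paper gives no argument of its own here: it cites the principal-ideal case of Lemma 10.99.17 in the Stacks Project. Your proposal is therefore a self-contained alternative. The closest in-paper argument is the proof of \Cref{lem:compact-infinite-gluing}, whose hypotheses for the one-element family $\{x\}$ are exactly those of this lemma.

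Both routes start from the same input: $\Tor_1^B(N,M)=\Tor_2^B(N,M)=0$ whenever $xN=0$. You obtain it by comparing $P_\bullet\otimes_B N=(P_\bullet/xP_\bullet)\otimes_{B/xB}N$ with a free $B/xB$-resolution of $M/xM$. Exactness of $P_\bullet/xP_\bullet$ in degrees $1,2$ makes the two agree in degrees $\le 3$, which is all $\Tor_2$ sees. The paper instead uses Stacks Lemma 10.99.8 plus dimension shifting along $0\to K\to\bigoplus B/xB\to N\to 0$.

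The routes then diverge. You work with the module $N$:
\begin{itemize}
\item you extend the vanishing to all $x$-power torsion modules via the filtration by $N[x^k]$ and filtered colimits;
\item you split off the $x$-power torsion submodule;
\item you embed the $x$-torsion-free quotient into its localization, whose $\Tor_1$ vanishes by (3), so its $\Tor_1$ is a quotient of a $\Tor_2$ against an $x$-power torsion module.
\end{itemize}
The paper works with the group $\Tor_1^B(N,M)$ itself. The sequences $0\to N[x]\to N\xrightarrow{x}xN\to 0$ and $0\to xN\to N\to N/xN\to 0$ show that $x$ acts injectively on it, while (3) shows it is $x$-power torsion, hence zero.

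The paper's version needs vanishing only against modules killed by $x$ itself, with no d\'evissage or colimits. It also carries over verbatim to arbitrary families of elements, which is why it is the form used in \Cref{sec:compact}. Yours has the by-product that $\Tor_1^B(-,M)$ and $\Tor_2^B(-,M)$ vanish on every $x$-power torsion module.

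Two small wording points:
\begin{itemize}
\item ``$M$ is flat iff $\Tor_1^B(N,M)=0$ for all $N$'' is the Tor criterion, not the local criterion for flatness.
\item Once $\Tor_1^B(B/xB,M)=0$, the comparison map $\Tor_1^B(M,N)\to\Tor_1^{B/xB}(M/xM,N)$ is an isomorphism, not merely an injection.
\end{itemize}
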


\begin{proof}
  This is precisely \cite[Lemma 10.99.17]{stacks-project} in the special case where the ideal is generated by a single element.
\end{proof}

\begin{corollary}[One step gluing]\label{cor:pair-gluing}
If $(x,y)$ is an exact pair on $B$ and on $M$, if $M/xM$ is flat over
$B/xB$, and if $M[x^{-1}]$ is flat over $B[x^{-1}]$, then $M$ is flat over
$B$.
\end{corollary}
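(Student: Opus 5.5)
The corollary follows by combining \Cref{lem:tor-exact-pair} with \Cref{lem:one-gluing}; the only work is matching hypotheses.

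First, since $(x,y)$ is an exact pair of zero divisors on $B$ and on $M$, \Cref{lem:tor-exact-pair} applies and gives $\Tor_j^B(B/xB,M)=0$ for all $j>0$. In particular this holds for $j=1,2$, which is hypothesis (1) of \Cref{lem:one-gluing}. Recall the mechanism: exactness of the pair on $B$ yields the two-periodic free resolution
\[
\cdots\xrightarrow{x}B\xrightarrow{y}B\xrightarrow{x}B\to B/xB\to 0,
\]
and exactness of the pair on $M$ says this resolution stays acyclic in positive degrees after applying $-\otimes_B M$.

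Hypotheses (2) and (3) of \Cref{lem:one-gluing} are exactly the remaining assumptions: $M/xM$ is flat over $B/xB$, and $M[x^{-1}]$ is flat over $B[x^{-1}]$. \Cref{lem:one-gluing} then gives that $M$ is flat over $B$.

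The only point needing care is the grading. The paper's convention is that flatness refers to the underlying ungraded module, and in the cases of interest $B$ is even, so it is genuinely commutative. Thus \Cref{lem:one-gluing}, which is stated for commutative rings, applies verbatim to the underlying ungraded ring and module. Exactness of the pair is a statement about kernels and images, so it is insensitive to forgetting the grading. No real obstacle is expected: the substance lies in the earlier lemma and in the Stacks Project flatness criterion.
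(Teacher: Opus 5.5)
Your proposal is correct and is essentially the paper's own proof. The paper likewise applies \Cref{lem:tor-exact-pair} to obtain $\Tor_j^B(B/xB,M)=0$ for all $j>0$, and then concludes with \Cref{lem:one-gluing}.
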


\begin{proof}
Since $(x,y)$ is an exact pair on $B$ and on $M$, \cref{lem:tor-exact-pair} implies that $\Tor_j^B(B/xB,M)=0$ for every $j>0$. Then \Cref{lem:one-gluing} leads to the statement.
\end{proof}

\begin{lemma}[Finite Euler gluing]\label{lem:finite-gluing}
Let $(x_i,y_i)$, $1\leq i\leq m$, be exact pairs on a commutative ring $B$ and
on a $B$-module $M$. Suppose
\[
 M/x_iM\text{ is flat over }B/x_iB
 \quad\text{for every }i,
\]
and
\[
 M[(x_1\cdots x_m)^{-1}]
 \text{ is flat over }B[(x_1\cdots x_m)^{-1}].
\]
Then $M$ is flat over $B$.
\end{lemma}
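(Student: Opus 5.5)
Argue by induction on $m$. For $m=1$ this is exactly \Cref{cor:pair-gluing}. Assume the statement for $m-1$ pairs. Set $x=x_m$, $y=y_m$, and $B'=B[x^{-1}]$, $M'=M[x^{-1}]$. Since $(x,y)$ is an exact pair on $B$ and on $M$, and $M/xM$ is flat over $B/xB$, \Cref{cor:pair-gluing} reduces the claim to showing that $M'$ is flat over $B'$. This flatness will come from the inductive hypothesis, applied over $B'$ to the remaining pairs $(x_i,y_i)$ with $i<m$.

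First I check that the hypotheses pass to the localization. Localization at $x$ is exact, so the kernel of $x_i$ on $M'$ is $(\ker x_i)[x^{-1}]=(y_iM)[x^{-1}]=y_iM'$. The symmetric equality holds for $y_i$, and the same argument works for $B'$. Hence each $(x_i,y_i)$ with $i<m$ is still an exact pair on $B'$ and on $M'$. The possibility that some $x_i$ becomes a unit or zero in $B'$ does no harm, because the exactness equalities are all that the argument uses.

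Next I check the quotient hypothesis. We have $M'/x_iM'=(M/x_iM)[x^{-1}]$ and $B'/x_iB'=(B/x_iB)[x^{-1}]$. Flatness is preserved by base change along $B/x_iB\to (B/x_iB)[\bar x^{-1}]$. So $M'/x_iM'$ is flat over $B'/x_iB'$ for every $i<m$.

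Finally, the localization hypothesis. We have $M'[(x_1\cdots x_{m-1})^{-1}]=M[(x_1\cdots x_m)^{-1}]$, and the same identity holds for $B$. By assumption this module is flat over the corresponding ring. The inductive hypothesis over $B'$ therefore shows that $M'$ is flat over $B'$. Then \Cref{cor:pair-gluing} applied to $(x_m,y_m)$ gives that $M$ is flat over $B$.

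The only point needing care is the compatibility of exact pairs with localization, and it follows from exactness of localization as shown above. No other obstacle is expected. In the graded setting the same argument works verbatim, with homogeneous localizations, because flatness refers to the underlying ungraded modules.
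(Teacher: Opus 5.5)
Your proposal is correct and follows the paper's proof: the same induction on $m$, localizing at a single $x_i$ (you use $x_m$, the paper uses $x_1$, which is immaterial), passing the exact pairs, quotient flatness and the full localization to $B[x^{-1}]$, and finishing with \Cref{cor:pair-gluing}. You also check explicitly that the remaining pairs stay exact on the localized ring $B'$, not only on $M'$. The paper leaves this implicit, although the inductive hypothesis needs it.
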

\begin{proof}
We prove the proposition by induction on $m$. The case $m=1$ is exactly
\cref{cor:pair-gluing}. Suppose $m>1$ and localize at $x_1$. For
$i\geq2$, $(x_i,y_i)$ is an exact pair of zero divisors on $M[x_1^{-1}]$, and
\[
 \frac{M[x_1^{-1}]}{x_iM[x_1^{-1}]}
 \cong (M/x_iM)[x_1^{-1}]
\]
is flat over $(B/x_iB)[x_1^{-1}]$. Replace $M$ with $M[x_1^{-1}]$, the induction hypothesis shows that $M[x_1^{-1}]$ is flat over $B[x_1^{-1}]$. Then applying \cref{cor:pair-gluing} to $(x_1,y_1)$ shows that $M$ is flat over $B$.
\end{proof}

\subsection{Exact pair of zero divisors in $L_A$}
For each prime $p$, choose a standard $p$-typical sequence
\[
  v_{p,0}=p,v_{p,1},v_{p,2},\ldots
\]
in $L_{(p)}$ and put
\[
 I_{p,n}=(v_{p,0},\ldots,v_{p,n-1})\subseteq L_{(p)},
 \qquad I_{p,0}=(0).
\]
Let $\tau:\bbT\to\bbT$ be the identity character.  The Euler
class $e_{\tau^p}$ is divisible by $e_\tau$ since it restricts to $0$ at the trivial group; write
\[
  e_{\tau^p}=\psi_p \cdot e_\tau, \qquad \res_1^{\bbT}(\psi_p)=p.
\]
Here, $\psi_p$ is precisely the element defined in \cite[Proposition 5.46]{hausmann2022global}.
After $p$-localization, Hausmann--Meier construct for every $n\geq0$ a unique
class
\[
 \psi_p^{(n)}\in L_{\bbT,(p)}/I_{p,n}
\]
such that
\[
  e_{\tau^{p^n}}=\psi_p^{(n)}e_\tau^{p^n},  \qquad  \res_1^{\bbT}(\psi_p^{(n)})=v_{p,n}.
\]
For $n=0$, this is the $p$-localization of $\psi_p$\cite[Remark 5.13]{hausmann2023invariant}.  The construction and restriction calculation occur immediately before \cite[Proposition 5.10]{hausmann2023invariant}.

Write
\[
  K\mathrel{\lessdot_p}H
\]
when $K<H$ and $H/K\cong C_p$.  Choose a quotient $q_{H/K}:H\twoheadrightarrow C_p$ and a faithful character $\bar\tau$ of $C_p$.  Define
\[
  e_{H/K}=e_{q_{H/K}^*\bar\tau}\in L_H
\]
and, after $p$-localization, define
\[
 d_{H/K}^{(p,n)}
 =q_{H/K}^*\bigl(\bar\psi_p^{(n)}\bigr)
 \in L_{H,(p)}/I_{p,n},
\]
where $\bar\psi_p^{(n)}$ is the restriction of $\psi_p^{(n)}$ to $C_p$.
At height zero we use the class
\[
  d_{H/K}=q_{H/K}^*(\bar\psi_p)\in L_H,
\]
where $\bar\psi_p$ denotes the restriction of $\psi_p$ to $C_p$; its $p$-localization is $d_{H/K}^{(p,0)}$. Note that
\[
e_{H/K}=e_{\chi_{H/K}},\qquad
d_{H/K}=\operatorname{tr}_K^H(1),
\]
where $\chi_{H/K}:H\to\mathbb T$ is the resulting quotient character.

\begin{lemma}\label{lem:adapted-group}
Let $H$ be a finite abelian group and let $K\mathrel{\lessdot_p}H$.  There
exist a rank-one abelian compact Lie group $\widetilde H$, an inclusion
$H\hookrightarrow\widetilde H$, and a surjective non-torsion character
$u:\widetilde H\to\bbT$ such that
\[
 \ker(u)=K,
 \qquad
 \ker(pu)=H,
\]
and $u|_H$ is the quotient character $H\to H/K\cong C_p$.
\end{lemma}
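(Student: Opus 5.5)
We construct $\widetilde H$ as a pushout of $H$ with the circle. Write $\chi=\chi_{H/K}\colon H\to\bbT$ for the quotient character, with image $\mu_p\subseteq\bbT$ and kernel $K$. Consider the homomorphism $\iota\colon K\to H\times\bbT$, $k\mapsto (k,1)$. That is the wrong thing to quotient by. Instead we glue $H$ and $\bbT$ along $\mu_p$: since $H/K\cong C_p$, choose $h_0\in H$ mapping to a generator with $\chi(h_0)=\zeta=e^{2\pi i/p}$. Note that $h_0^p\in K$. Set
\[
\widetilde H=(K\times\bbT)\big/\langle (h_0^{p},\,1)^{-1}\cdot(1,1)\rangle
\]
suitably adjusted; more cleanly, define $\widetilde H=(H\times\bbT)/D$ with $D=\{(h,z): h\in K?\}$. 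In practice I would take the cleanest route:
\[
\widetilde H=(H\times\bbT)/\{(h,\chi(h)^{-1}) : h\in H\}^{\perp}.
\]
Concretely, define $\widetilde H=(H\times\bbT)/\Delta$, where $\Delta=\{(h,\chi(h))\}$ is the graph of $\chi$ restricted to $H$. Then $H\to\widetilde H$, $h\mapsto[(h,1)]$ is injective, because $(h,1)\in\Delta$ forces $h=1$. Moreover $\widetilde H$ is a compact abelian Lie group. Its identity component is the image of $\bbT$, so it has rank one.

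Next we define the character by
\[
u[(h,z)]=\chi(h)^{-1}z\cdot w(z),
\]
where $w(z)$ is still to be pinned down. The condition to arrange is that $u$ must kill $\Delta$, so we need $u(h,z)=\lambda(z)\chi(h)^{\pm1}$ with $\lambda(\chi(h))=\chi(h)^{\mp1}$. The obvious attempt $u(h,z)=\chi(h)^{-1}z$ kills $\Delta$, but then $u$ restricted to $H$ is $\chi^{-1}$. To repair this, replace the identification by the antigraph $\Delta'=\{(h,\chi(h)^{-1})\}$ and set $u[(h,z)]=\chi(h)z$. This is well defined, surjective, and non-torsion, since $u|_\bbT=\mathrm{id}$. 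Its restriction to $H$ is $\chi$, as required.

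It remains to compute the kernels, which I expect to be the main obstacle.

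For $\ker u$: $[(h,z)]$ lies in it if and only if $z=\chi(h)^{-1}$, i.e.\ $(h,z)\in\Delta'$. But then the class is trivial. So $\ker u$ is trivial, not $K$.

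I therefore modify the construction: take $\Delta''=\{(k,1):k\in K\}$-free data, and quotient instead by the graph of $\chi$ restricted to a complement. Since $H$ need not split as $K\times C_p$, the robust fix is to shrink $\Delta'$. Set $\widetilde H=(H\times\bbT)/\Delta_p$, where
\[
\Delta_p=\{(h,z):\ \chi(h)z=1,\ z\in\mu_p\}\cap(\text{subgroup generated by }(h_0,\zeta^{-1})).
\]
This is cyclic of order $\operatorname{ord}(h_0)$, and it meets $H\times 1$ in $\langle h_0^{p}\rangle\times 1$. To keep $H$ injective, I would instead choose the generator $(h_0,\zeta^{-1})$ only when $h_0^p$ can be chosen trivial. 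Failing that, I would use the following alternative.

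The alternative is $\widetilde H=K\times\bbT$ with $H\hookrightarrow K\times\bbT$. This works if and only if $H$ embeds so that the projection to $\bbT$ is $\chi$ and the kernel is $K$. That requires $H\to K\times\mu_p$ to be injective, which requires a retraction $H\to K$, i.e.\ a splitting. In the nonsplit case (e.g.\ $C_{p^2}\supset C_p$), take $\widetilde H=(K\times\bbT)/\langle (h_0^{p},\,\zeta_{p^2}\text{-lift})\rangle$. More generally, take $\widetilde H=(K\times\bbT)/\{(k^{-1},\rho(k)) : k\in\langle h_0^p\rangle\}$, where $\rho\colon\langle h_0^p\rangle\to\bbT$ is chosen so that $\rho(h_0^p)=\omega^p$, and embed $h_0\mapsto[(1,\omega)]$.

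With $u[(k,z)]=z^{p}$ and $\omega$ a $p$-th root of $\rho(h_0^p)$, one checks the following:
\begin{itemize}
\item the embedding of $H$ is well defined and injective;
\item $\ker u=H$;
\item $\ker(u')=K$ for $u'=z\cdot(\text{correction})$.
\end{itemize}
So after renaming, the character with kernel $K$ is $u$ and the one with kernel $H$ is $pu$. Verifying this bookkeeping in the nonsplit case is the key step.
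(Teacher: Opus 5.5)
Your final construction, $\widetilde H=(K\times\bbT)/\{(k^{-1},\rho(k)) : k\in\langle h_0^p\rangle\}$ with $h_0\mapsto[(1,\omega)]$, is a sound starting point. It is an explicit pushout realizing the lift that the paper obtains abstractly: the paper dualizes $0\to C_p\to H^\vee\to K^\vee\to 0$, lifts it using $\Ext^2_{\bbZ}(K^\vee,\bbZ)=0$, and sets $\widetilde H=X^\vee$.

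\textbf{The gap.} Your proposal stops at the only nontrivial point, and what it asserts there is wrong.
\begin{itemize}
\item The character $u[(k,z)]=z^p$ descends to $\widetilde H$ only if $\rho(k)^p=1$ on $\langle h_0^p\rangle$. With $\rho$ injective, which you need for $K\hookrightarrow\widetilde H$, this forces $M:=\operatorname{ord}(h_0^p)$ to divide $p$. So for $K=C_{p^2}<H=C_{p^3}$ your $u$ is not defined at all.
\item In your own test case $K=C_p<H=C_{p^2}$ (so $M=p$), $z^p$ is defined, but its kernel is the image of $K$, not $H$.
\item The character with kernel $K$ is left as ``$z\cdot(\text{correction})$''. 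The essential relation, that the character cutting out $H$ is exactly $p$ times the one cutting out $K$ and restricts to the quotient character on $H$, is obtained by ``renaming'' rather than proved. That relation is the whole content of the lemma, so the argument is incomplete.
\item You never require $\rho$ to be injective or $\omega$ to be a primitive $pM$-th root of unity. Both are needed, otherwise $K$, respectively $H$, need not embed. For example, if $p\nmid M$ and $\omega$ has order $M$, then $h_0^M\mapsto 1$ although $h_0^M\neq 1$.
\end{itemize}

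\textbf{The repair.} Choose $h_0$ with $\chi(h_0)=e^{2\pi i/p}$. Put $N=\operatorname{ord}(h_0)=pM$ and $\omega=e^{2\pi i/N}$. Let $\rho$ be the injective character of $\langle h_0^p\rangle$ with $\rho(h_0^p)=\omega^p$, and set $u[(k,z)]=z^{M}$. Then:
\begin{itemize}
\item $u$ kills the relations, since $\rho(k)^M=1$.
\item $u$ is surjective and non-torsion on the identity component.
\item $u(h_0)=\omega^M=\chi(h_0)$ and $u|_K=1$, so $u|_H=\chi$.
\item $\ker u=\{[(k,\rho(c))]\}=\{[(kc,1)]\}$ is the image of $K$, because $\rho$ maps onto $\mu_M$.
\item $\ker(pu)=\{[(k,z)] : z\in\mu_N\}$ is the image of $H$, because $\omega$ generates $\mu_N$.
\end{itemize}

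Delete the abandoned earlier attempts rather than leaving them in. For instance, in $(H\times\bbT)/\Delta$ the class of $(h,1)$ is trivial for every $h\in K$. So the asserted injectivity of $H\to\widetilde H$ there holds only when $K=1$.
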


\begin{proof}
Pontryagin duality turns $K\lessdot_p H$ into an extension
\[
 0\longrightarrow C_p\longrightarrow H^\vee\longrightarrow K^\vee
 \longrightarrow0.
\]
Apply $\Hom_{\bbZ}(K^\vee,-)$ to $0\to\bbZ\xrightarrow{p}\bbZ\to C_p\to0$, here $\bbZ$ is the Pontryagin dual of $\bbT$. A finite abelian group has
projective dimension one over $\bbZ$, so
$\Ext^2_{\bbZ}(K^\vee,\bbZ)=0$ and the extension lifts to
\[
 0\longrightarrow\bbZ\langle u\rangle\longrightarrow X
 \longrightarrow K^\vee\longrightarrow0.
\]
Here $\bbZ\langle u\rangle$ is the infinite cyclic group generated by $u:\bbT \to \bbT$.
Its pushout along $\bbZ\to C_p$ is $H^\vee$.  Putting $\widetilde H=X^\vee$ and dualizing yields the desired construction.
\end{proof}

\begin{proposition}\label{prop:universal-edge}
Let $K\lessdot_p H$ and $n\geq0$.  In
$L_{H,(p)}/I_{p,n}L_{H,(p)}$ one has
\[
  \ann(e_{H/K}^{p^n})=(d_{H/K}^{(p,n)}), \qquad \ann(d_{H/K}^{(p,n)})=(e_{H/K}^{p^n}).
\]
At height zero there is also an integral assertion in $L_H$:
\[
 \ann_{L_H}(e_{H/K})=(d_{H/K}),
 \qquad
 \ann_{L_H}(d_{H/K})=(e_{H/K}).
\]
\end{proposition}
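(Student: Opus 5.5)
We use the adapted group of \Cref{lem:adapted-group} to present $L_H$ as a quotient of a rank-one Lazard ring by a single non-zero divisor that factors as $e\cdot d$; the annihilator statements then follow formally.

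\textbf{Setup.} Choose $\widetilde H$ and $u:\widetilde H\to\bbT$ as in \Cref{lem:adapted-group}, so that $\ker(pu)=H$ and $\ker(u)=K$. By the exact sequence of \Cref{sec:background} for $V=pu$, which is non-torsion, we have
\[
L_H\cong L_{\widetilde H}/(e_{pu}),
\]
and $e_{pu}$ is a non-zero divisor in $L_{\widetilde H}$. Pull back along $u^*\colon L_\bbT\to L_{\widetilde H}$ the factorization $e_{\tau^{p^n}}=\psi_p^{(n)}e_\tau^{p^n}$, which holds in $L_{\bbT,(p)}/I_{p,n}$ (and integrally for $n=0$). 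Set $\widetilde e=e_u$ and $\widetilde d=u^*\psi_p^{(n)}$. In $\widetilde B:=L_{\widetilde H,(p)}/I_{p,n}$ we obtain
\[
e_{p^nu}=\widetilde d\,\widetilde e^{\,p^n}.
\]
Here $\widetilde e$ restricts to $e_{H/K}$ and $\widetilde d$ restricts to $d^{(p,n)}_{H/K}$, because $u|_H$ is the chosen quotient character.

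One subtlety is that for $n>0$ the relevant kernel is $\ker(p^nu)$, a larger finite group, not $H$. I would handle this by noting that modulo $I_{p,n}$ we have $e_{p^n\chi}=v_{p,n}$-multiples of $e_\chi^{p^n}$. Alternatively, one can use that $L_H/I_{p,n}=\widetilde B/(e_{pu})$ and that $e_{pu}$ divides $e_{p^nu}$. The cleanest route is probably to work with the pair $(e_u^{p^n},\widetilde d)$ directly and invoke the Hausmann--Meier factorization for $\ker(pu)$ through $\psi^{(n)}$.

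\textbf{Key formal step.} Let $S$ be a ring with $f=ab$ a non-zero divisor, and suppose $a$ and $b$ are non-zero divisors in $S$ with $(a)\cap(b)=(ab)$. Then in $S/(f)$ one has $\ann(a)=(b)$ and $\ann(b)=(a)$. The proof is short. If $ax=aby$ in $S$, then cancelling $a$ gives $x=by$, so $\ann(a)\supseteq(b)$ holds trivially and the inclusion $\subseteq$ follows. So the work reduces to verifying, in $\widetilde B$:
\begin{enumerate}
\item $\widetilde e$ (hence $\widetilde e^{\,p^n}$) and $\widetilde d$ are non-zero divisors;
\item $e_{pu}$ is a non-zero divisor modulo $I_{p,n}$, i.e.\ the sequence $(p,v_1,\dots,v_{n-1},e_{pu})$ is regular on $L_{\widetilde H,(p)}$.
\end{enumerate}
For $\widetilde e=e_u$ with $u$ non-torsion, non-zero-divisibility is stated in \Cref{sec:background}. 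Reducing modulo $I_{p,n}$ requires knowing that $I_{p,n}$ behaves regularly on $L_{\widetilde H}$. I expect to get this from the structure of $L_{\widetilde H}$ as a free, or flat, module over $L$: Hausmann's computation for rank-one groups, via the pullback square describing $L_{\widetilde H}$ in terms of $L_{\widetilde H/\ker}$ and completed power series over $L$.

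\textbf{Main obstacle.} The hard step is showing that $\widetilde d$ is a non-zero divisor in $\widetilde B$. It is not an Euler class, so the general result does not apply. My plan is to exploit the fact that $\widetilde d\,\widetilde e^{\,p^n}=e_{p^nu}$ is a non-zero divisor: if $\widetilde d\,x=0$, then $e_{p^nu}x=\widetilde e^{\,p^n}\widetilde d\,x=0$, so $x=0$. This requires $e_{p^nu}$ to be a non-zero divisor modulo $I_{p,n}$. I would deduce that from the Euler-class case applied to the non-torsion character $p^nu$, together with the flatness of $L_{\widetilde H}$ over $L$ used in the previous step.

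The remaining input is the coprimality condition $(\widetilde e)\cap(\widetilde d)=(\widetilde e\,\widetilde d)$, which is equivalent to $\widetilde d$ being a non-zero divisor on $\widetilde B/\widetilde e=L_K/I_{p,n}$. Under restriction to $K$, the element $\widetilde d$ maps to the restriction of $\psi_p^{(n)}$ along the trivial character, which is $v_{p,n}$. By Landweber regularity of $L_K/I_{p,n}$ in the underlying nonequivariant part, this is injective. I would check this using the injectivity of $L_K\to\Phi$-type localizations, or again via the pullback-square description of $L_K$. This verification is where I expect the real difficulty to lie.
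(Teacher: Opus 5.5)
Your strategy is the paper's: realize $L_{H,(p)}/I_{p,n}$ as $\widetilde B/(e_{pu})$ with $\widetilde B=L_{\widetilde H,(p)}/I_{p,n}$, factor $e_{pu}=\widetilde d\,\widetilde e^{\,p^n}$, and read off the annihilators formally. However, your Setup uses the wrong factorization and leaves the resulting mismatch unresolved. Modding out by $e_{p^nu}$ gives $L_{\ker(p^nu),(p)}/I_{p,n}$, not $L_{H,(p)}/I_{p,n}$, so for $n\geq 2$ your formal step would prove the identities in the wrong ring. Your first two repairs do not fix this: knowing $e_{pu}\mid e_{p^nu}$ only yields $\widetilde d\,\widetilde e^{\,p^n}=0$ in $\widetilde B/(e_{pu})$, i.e.\ the easy inclusions. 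Only your third option is right. The Hausmann--Meier relation is $e_{\tau^p}=\psi_p^{(n)}e_\tau^{p^n}$ in $L_{\bbT,(p)}/I_{p,n}$. The relation as displayed in the text, with $e_{\tau^{p^n}}$ on the left, is a typo. At $n=0$ it would force $\psi_p^{(0)}=1$ rather than $\psi_p$, and for $n\geq2$ it is incompatible with $\res^{\bbT}_1\psi_p^{(n)}=v_{p,n}$ (compare $[p](x)\equiv v_nx^{p^n}$ modulo $(I_n,x^{p^n+1})$). The paper's proof does pull back $e_{\tau^p}$. Along $u$ this gives $e_{pu}=\widetilde d\,\widetilde e^{\,p^n}$ in $\widetilde B$, which is precisely the element you must quotient by.

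You have also misplaced the difficulty. If $f=ab$ is a non-zero divisor in $S$, then $a$ and $b$ are automatically non-zero divisors: if $az=0$ then $fz=0$, so $z=0$. Your own cancellation argument then gives $\ann_{S/f}(\bar a)=(\bar b)$ and $\ann_{S/f}(\bar b)=(\bar a)$ without ever using $(a)\cap(b)=(ab)$. So your ``main obstacle'' is free and the coprimality check is unnecessary; this formal step is the whole of the paper's argument. The one substantive input is that $e_{pu}$ remains a non-zero divisor in $\widetilde B$, which the paper simply asserts because $pu$ is non-torsion. Your justification for it is not sufficient as stated. Tensoring $0\to L_{\widetilde H}\xrightarrow{e_{pu}}L_{\widetilde H}\to L_H\to 0$ with $L_{(p)}/I_{p,n}$ shows that the kernel of $e_{pu}$ on $\widetilde B$ is the image of $\Tor_1^{L_{(p)}}(L_{(p)}/I_{p,n},L_{H,(p)})$. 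You therefore need this group to vanish, e.g.\ $(p,v_{p,1},\dots,v_{p,n-1})$ regular on $L_{H,(p)}$, or $L_H$ flat over $L$; flatness of $L_{\widetilde H}$ alone does not give it. At height zero nothing of this kind is needed, since $e_{pu}=u^*(\psi_p)e_u$ is a non-zero divisor in $L_{\widetilde H}$ by the background fact.
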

\begin{proof}
Choose $(\widetilde H,u)$ as in \cref{lem:adapted-group} and set
\[
 D_n=L_{\widetilde H,(p)}/I_{p,n}L_{\widetilde H,(p)}.
\]
Pulling $e_{\tau^p}$ back along $u$ gives
\[
 e_{pu}=u^*(\psi_p^{(n)})e_u^{p^n}
 \quad\text{in }D_n.
\]
The character $pu$ is non-torsion, so $e_{pu}$ is a non-zero divisor in $D_n$, and $L_{H,(p)}/I_{p,n}\cong D_n/(e_{pu})$. 
Then the images of $u^*(\psi_p^{(n)})$ and $e_u^{p^n}$ in $D_n/u^*(\psi_p^{(n)})e_u^{p^n}$ satisfy
\[
 \ann(\bar u^*(\psi_p^{(n)}))=(\bar e_u^{p^n}),
 \qquad
 \ann(\bar e_u^{p^n})=(\bar u^*(\psi_p^{(n)})),
\]
The two factors restrict to $e_{H/K}^{p^n}$ and $d_{H/K}^{(p,n)}$. 

For height zero, use the factorization
$e_{pu}=u^*(\psi_p)e_u$. Here $e_{pu}$ is a non-zero divisor and $L_H\cong L_{\widetilde H}/e_{pu}$. The same argument in $L_{\widetilde H}$ therefore proves both annihilator equalities directly.
\end{proof}

As a corollary, the ideal $(d_{H/K})\subset L_H$ and $(d_{H/K}^{(p,n)})\subset L_{H,(p)}/I_{p,n}$ do not depend on the chosen generator of $(H/K)^\vee$ since they are the annihilators of the corresponding powers of $e_{H/K}$.

\begin{lemma}\label{prop:d-invariant}
For every $K\lessdot_p H$ and $n\geq0$, the ideals $(e_{H/K}^{p^n})$ and $(d_{H/K}^{(p,n)})$ are invariant in $L_{H,(p)}/I_{p,n}$.  When $n=0$, $(e_{H/K})$ and $(d_{H/K})$ is invariant in $L_H$.
\end{lemma}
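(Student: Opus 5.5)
Throughout, "invariant" is meant in the Hopf-algebroid sense: an ideal $I\subseteq L_H$ is invariant if $\eta_R(I)\Gamma_H=\eta_L(I)\Gamma_H$. Geometrically, this says the closed substack it cuts out is stable under strict isomorphisms. The plan handles $(e_{H/K}^{p^n})$ by a geometric argument and then gets $(d_{H/K}^{(p,n)})$ from it through the annihilator description of \Cref{prop:universal-edge}.

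\emph{The Euler class ideal.} First consider $(e_{H/K})\subseteq L_H$. By \Cref{prop:substacks}(i), the quotient $L_H/(e_{\chi_{H/K}})=L_K$ corresponds to the closed substack $\MFG^K\subseteq\MFG^H$. A closed substack of the stack of $(L_H,\Gamma_H)$ corresponds to an invariant ideal, so $(e_{H/K})$ is invariant. One can also check this directly. Under a strict isomorphism $f$ of $H$-equivariant formal group laws, the coordinate changes as $y'_\epsilon=y_\epsilon\cdot(\text{unit})$. Pulling back along $\phi_\chi$ then gives $\eta_R(e_\chi)=\eta_L(e_\chi)\cdot(\text{unit in }\Gamma_H)$. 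Since this holds for any character $\chi$, every principal Euler class ideal is invariant, and hence so is every power $(e_\chi^{p^n})$. The ideal $I_{p,n}$ is invariant in $L_{H,(p)}$: it is pulled back from the nonequivariant invariant ideal via $L\to L_H$, using \cite{hausmann2023invariant}. Therefore $(e_{H/K}^{p^n})$ is invariant in $L_{H,(p)}/I_{p,n}$, with respect to the quotient Hopf algebroid $\Gamma_{H,(p)}/I_{p,n}$.

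\emph{The ideal $(d)$ from annihilators.} Write $B=L_{H,(p)}/I_{p,n}$ and $\Gamma_B=\Gamma_{H,(p)}/I_{p,n}$. Since $\Gamma_B$ is flat over $B$ via both $\eta_L$ and $\eta_R$, tensoring the exact sequence
\[
0\to\ann_B(e^{p^n})\to B\xrightarrow{e^{p^n}}B
\]
along $\eta_L$ shows that the annihilator of $\eta_L(e^{p^n})$ in $\Gamma_B$ is $\eta_L(\ann_B(e^{p^n}))\Gamma_B$. Doing the same along $\eta_R$ gives the analogous statement for $\eta_R$. By the previous step, $\eta_R(e^{p^n})$ is a unit multiple of $\eta_L(e^{p^n})$, so these two elements have the same annihilator in $\Gamma_B$. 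Combining the two flat base changes with \Cref{prop:universal-edge}, which identifies $\ann_B(e_{H/K}^{p^n})=(d_{H/K}^{(p,n)})$, yields
\[
\eta_L(d_{H/K}^{(p,n)})\Gamma_B=\eta_R(d_{H/K}^{(p,n)})\Gamma_B,
\]
which is the invariance of $(d_{H/K}^{(p,n)})$. The $n=0$ integral case in $L_H$ is the same argument, using flatness of $\Gamma_H$ and the integral annihilator equalities of \Cref{prop:universal-edge}.

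\emph{Main obstacle.} The delicate step is the unit relation $\eta_R(e_\chi)=u\,\eta_L(e_\chi)$, especially modulo $I_{p,n}$ and for torsion characters, where $e_\chi$ is a zero divisor. To get it, I would use the universal strict isomorphism: it is a map of formal groups preserving $\phi$, so it carries the divisor $[\phi(\epsilon)]$ to itself. Its pullback of $y_\epsilon$ therefore differs from $y_\epsilon$ by a unit, and evaluating at $\phi(\chi)$ produces the required unit in $\Gamma_H$. Because this identity holds universally, it passes to every quotient, including the one by $I_{p,n}$.
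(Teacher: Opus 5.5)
Your proposal is correct and follows essentially the paper's argument. Invariance of $(e_{H/K}^{p^n})$ comes from the closed substack cut out by the Euler class, and invariance of $(d_{H/K}^{(p,n)})$ follows, as in the paper, by comparing $\ann(\eta_L(x))=\eta_L(y)C$ with $\ann(\eta_R(x))=\eta_R(y)C$ in the quotient Hopf algebroid via \Cref{prop:universal-edge}. Your extras, namely the explicit unit relation $\eta_R(e_\chi)=u\,\eta_L(e_\chi)$, the invariance of $I_{p,n}L_{H,(p)}$ via the Hopf algebroid map induced by $L\to L_H$, and the flat base change behind the annihilator formulas, only make explicit what the paper's proof uses tacitly.
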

\begin{proof}
  The principal ideal generated by an Euler class is invariant since their vanishing locus gives rise to a closed substack of $\MFG^H$. Consider Hopf algebroid $$(B,C)=(L_{H,(p)}/I_{p,n},\Gamma_{H,(p)}/\eta_L(I_{p,n})\Gamma_{H,(p)})$$ and put $x=e_{H/K}^{p^n}$ and $y=d_{H/K}^{(p,n)}$.  By \cref{prop:universal-edge},
  \[
  \ann_C(\eta_L(x))=\eta_L(y)C, \qquad \ann_C(\eta_R(x))=\eta_R(y)C.
  \]
  The ideals $\eta_L(x)C$ and $\eta_R(x)C$ agree, hence so do their annihilators.  This proves invariance of $(d_{H/K}^{(p,n)})$.  At height zero the same argument applies directly to $(L_H,\Gamma_H)$, using the $n=0$ part of \cref{prop:universal-edge}.
\end{proof}

\subsection{The gluing data}
For a subgroup $H\leq A$, restriction gives a surjection $L_A\to L_H$.  If $H\hookrightarrow A$, its kernel is generated by Euler classes corresponding to a generating set of $\ker(A^\vee\to H^\vee)$ \cite[Lemma 2.21]{hausmann2023invariant}.  For a graded $L_A$-module $N$, put
\[
  N_H=L_H\tensor_{L_A}N.
\]
Write
\[
  \Phi^HL =L_H[e_\chi^{-1}\mid1\neq\chi\in H^\vee].
\]
Hausmann-Meier\cite[Proposition 2.25]{hausmann2023invariant} calculate
\[
  \Phi^H L \cong  L[(b_0^\chi)^{\pm1},b_i^\chi\mid i>0,\ 1\neq\chi\in H^\vee]
\]
Thus $\Phi^H L$ is faithfully flat over $L$.  Set
\[
  \Phi^H N=\Phi^H L \otimes_{L_H} N_H.
\]
Let
\[
  \Max(H)=\{K < H\mid K\text{ is maximal}\}.
\]
Every $H/K$ is cyclic of prime order.  Choose one quotient Euler class for each $K\in\Max(H)$ and set
\[
  \Delta_H=\prod_{K\in\Max(H)}e_{H/K}, \qquad \Delta_1=1.
\]

\begin{lemma}\label{lem:maximal-euler-suffices}
For every finite abelian group $H$,
\[
 L_H[\Delta_H^{-1}]\cong \Phi^HL.
\]
Consequently, for every $L_H$-module $M$, $M[\Delta_H^{-1}]= \Phi^H L \tensor_{L_H} M$.
\end{lemma}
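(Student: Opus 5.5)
We need to show that inverting $\Delta_H$ already inverts every $e_\chi$ with $1\neq\chi\in H^\vee$. Since $\Delta_H$ divides the product of all such Euler classes, there is a canonical map $L_H[\Delta_H^{-1}]\to\Phi^HL$. It is an isomorphism as soon as each $e_\chi$ with $\chi\neq1$ becomes a unit in $L_H[\Delta_H^{-1}]$. Once that holds, both rings are the localization of $L_H$ at the same multiplicative set, up to saturation. The module statement then follows by tensoring, since $M[\Delta_H^{-1}]=L_H[\Delta_H^{-1}]\tensor_{L_H}M$.

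\textbf{Step 1: units among multiples of a character.} Fix $1\neq\chi\in H^\vee$. I first show that if $e_\chi$ is a unit in some $L_H$-algebra $S$, then so is $e_{\chi'}$ for every $\chi'$ generating the same cyclic subgroup as $\chi$. More basically, I show that $e_{\chi}$ divides $e_{k\chi}$ for every $k$. This comes from pulling back the divisibility $e_{\tau^k}=\psi_k e_\tau$ in $L_{\bbT}$ along $\chi\colon H\to\bbT$; it holds because $e_{\tau^k}$ restricts to $0$ at the trivial group, exactly as for $k=p$ earlier in the paper. Hence if $e_{k\chi}$ is invertible, so is $e_\chi$. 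More generally, if $\chi=k\chi_0$, then $e_{\chi_0}\mid e_\chi$, so invertibility of $e_\chi$ forces invertibility of $e_{\chi_0}$. I need the opposite direction, handled next.

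\textbf{Step 2: reduce to characters of prime order.} Let $\chi$ have order $r>1$, choose a prime $p\mid r$, and set $\chi_1=(r/p)\chi$, which has order $p$. By Step 1, $e_\chi$ divides $e_{\chi_1}$. So if $e_{\chi_1}$ is a unit in $L_H[\Delta_H^{-1}]$, then $e_\chi$ is a unit too, since a divisor of a unit is a unit. It therefore suffices to treat characters of prime order $p$.

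\textbf{Step 3: prime-order characters.} Let $\chi$ have order $p$. Then $K=\ker\chi$ is maximal with $H/K\cong C_p$, so $K\in\Max(H)$. The chosen class $e_{H/K}=e_{\chi'}$ for some faithful character $\chi'$ of $H/K$, so $\chi=k\chi'$ with $p\nmid k$, and $\chi'=k'\chi$ with $kk'\equiv1\pmod p$. By Step 1, $e_{\chi'}$ divides $e_{k\chi'}=e_\chi$ and conversely, so $e_\chi$ and $e_{H/K}$ are associates. Since $e_{H/K}$ divides $\Delta_H$, it is a unit in $L_H[\Delta_H^{-1}]$, and hence so is $e_\chi$. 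For $H=1$ both sides are simply $L_1=L$.

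The main point needing care is Step 1's divisibility $e_\chi\mid e_{k\chi}$ in $L_H$. I would justify it via naturality of Euler classes under $\chi^*\colon L_{\bbT}\to L_H$, using $\chi^*e_{\tau^k}=e_{k\chi}$. Divisibility in $L_{\bbT}$ follows from the exact sequence $L_{\bbT}\xrightarrow{e_\tau}L_{\bbT}\to L_1\to0$ recalled in Section~\ref{sec:background}.
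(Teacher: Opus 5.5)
Your proof is correct and follows essentially the same route as the paper. You reduce a character $\chi$ of order $r$ to the prime-order character $(r/p)\chi$ via the divisibility $e_\chi\mid e_{(r/p)\chi}$, and then match a prime-order character with the chosen quotient class $e_{H/K}$, which divides $\Delta_H$.

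The only difference is in how you justify the divisibilities. You pull back $e_\tau\mid e_{\tau^k}$ from $L_{\bbT}$ along $\chi\colon H\to\bbT$, using naturality of Euler classes. The paper instead uses $\ker(L_H\to L_{\ker\chi})=(e_\chi)$, together with the fact that two generators of the same cyclic subgroup of $H^\vee$ have Euler classes generating the same ideal $\ker(L_H\to L_K)$. Your version needs only the circle case of that kernel description.
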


\begin{proof}
Let $\chi\in H^\vee$ be nontrivial of order $m$.  Choose a prime $q\mid m$ and put $\lambda=(m/q)\chi$, which has order $q$.  Then $K=\ker(\lambda)$ is maximal.  The characters $\lambda$ and the chosen quotient character $q_{H/K}^*\bar\tau$ for $H/K$ generate the same cyclic subgroup of $H^\vee$, so their Euler classes generate the same principal ideal $\ker(L_H\to L_K)$.  Hence $e_\lambda$ becomes invertible after inverting $\Delta_H$.

Put $P=\ker(\chi)$.  The character $\chi$ generates $\ker(H^\vee\to P^\vee)$, and
\[
  \ker(L_H\xrightarrow{\res_P^H}L_P)=(e_\chi).
\]
Since $\lambda|_P$ is trivial, $\res_P^H(e_\lambda)=0$.  It follows that $e_\lambda\in(e_\chi)$; thus $e_\lambda=c_\chi e_\chi$ for some $c_\chi\in L_H$.  The already inverted element $e_\lambda$ makes both factors invertible, and in particular $e_\chi$ is invertible.  Every nontrivial Euler class is therefore invertible in $L_H[\Delta_H^{-1}]$.  The reverse inclusion is immediate, and the module assertion follows by tensoring.
\end{proof}

\begin{lemma}\label{lem:closed-basechange}
For subgroups $K\leq H$ there is a canonical isomorphism
\[
  \Gamma_K\cong L_K\tensor_{L_H,\eta_L}\Gamma_H \tensor_{L_H,\eta_R}L_K.
\]
In particular, if $K=\ker V$ for a character $V\in H^\vee$, $\Gamma_K\cong \Gamma_H/e_V$. 
\end{lemma}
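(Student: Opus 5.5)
The plan is to interpret both sides as rings of functions on moduli of pairs of formal group laws with a strict isomorphism, and to use \Cref{prop:substacks} to identify the closed substack $\MFG^K\subseteq\MFG^H$ with the vanishing locus of the Euler classes $e_V$, $V\in\ker(H^\vee\to K^\vee)$. Since $(L_H,\Gamma_H)$ corepresents pairs of $H$-equivariant formal group laws together with a strict isomorphism (Hausmann--Meier, Proposition~3.2 and Corollary~3.7), the ring $L_K\tensor_{L_H,\eta_L}\Gamma_H\tensor_{L_H,\eta_R}L_K$ corepresents triples $(F,F',f)$ with $F,F'$ $H$-equivariant formal group laws whose underlying $H$-formal groups are corestricted from $K$ (equivalently, $\alpha^*$-images of $K$-laws, since $L_H\to L_K$ is surjective), and $f\colon F\to F'$ a strict isomorphism of $H$-formal group laws. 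The map from $\Gamma_K$ is induced by $\alpha_*$, and the task is to show it is an isomorphism of corepresented functors.

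First I would define the natural map $\Gamma_H\to\Gamma_K$ induced by corestriction; this is compatible with $\eta_L,\eta_R$ and the surjections $L_H\to L_K$, so it factors through the double tensor product. Next I would argue that for $K$-laws $F_0,F'_0$ over $R$, strict isomorphisms $\alpha_*F_0\to\alpha_*F'_0$ of $H$-formal group laws are the same as strict isomorphisms $F_0\to F'_0$ of $K$-formal group laws. The underlying formal schemes agree, since $\alpha_*$ completes along the image of $H^*\to K^*\to X$, which is the same divisor, as $H^*\to K^*$ is surjective for $K\le H$. A homomorphism of $K$-formal groups must respect $\phi\colon K^*\to X$, while an $H$-one respects only its precomposite with $H^*\to K^*$. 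Because $H^*\to K^*$ is surjective, these conditions coincide, and strictness (a condition on the coordinate at the identity section) is preserved. Hence the two functors agree and Yoneda gives the isomorphism. Alternatively, one can argue stack-theoretically: $\MFG^K\to\MFG^H$ is a closed immersion, hence a monomorphism, so $\MFG^K\times_{\MFG^H}\MFG^K\cong\MFG^K$. Pulling back along $\Spec L_K\times\Spec L_K$ gives $\Spec(L_K\tensor\Gamma_H\tensor L_K)\cong\Spec L_K\times_{\MFG^K}\Spec L_K=\Spec\Gamma_K$. I would present this stacky version, noting that the gradings match.

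For the special case $K=\ker V$, we have $L_K=L_H/e_V$, so the double tensor product is $\Gamma_H/(\eta_L(e_V),\eta_R(e_V))$. Invariance of the ideal $(e_V)$ (the vanishing locus is a closed substack, as in the proof of \Cref{prop:d-invariant}) gives $\eta_R(e_V)\Gamma_H=\eta_L(e_V)\Gamma_H$, hence $\Gamma_K\cong\Gamma_H/e_V$.

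The main obstacle is checking the monomorphism property at the level of the graded Hopf algebroid presentation, i.e.\ that "strict" isomorphisms and coordinates transport correctly under corestriction. Equivalently, one must confirm that $\Gamma_K$ corepresents strict isomorphisms between $K$-laws that are $\alpha^*$ of $H$-laws with the same coordinate data. I would handle this by the surjectivity of $H^*\to K^*$ argument above.
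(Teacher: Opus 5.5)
Your proposal is correct and matches the paper's proof: the paper also identifies $\Spec\Gamma_K\cong\Spec L_K\times_{\MFG^K}\Spec L_K\cong\Spec L_K\times_{\MFG^H}\Spec L_K$ via the closed immersion $\MFG^K\to\MFG^H$, and it deduces the case $K=\ker V$ from the invariance of $(e_V)$. The paper asserts the middle isomorphism without comment; your argument that a closed immersion is a monomorphism, equivalently that strict isomorphisms of corestricted laws are the same as strict isomorphisms of the underlying $K$-laws because $H^*\to K^*$ is surjective, supplies that step.
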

\begin{proof}
Note that $\MFG^K\to\MFG^H$ is the closed substack given by the common zero locus of Euler classes for $V\in \ker(H^*\to K^*)$. The pullback diagram
\[
\begin{tikzcd}
    \Spec L_K \arrow[r]\arrow[d] & \Spec L_H \arrow[d]  \\
    \MFG^K \arrow[r] &\MFG^H  
\end{tikzcd}
\]
leads to isomorphisms of affine schemes
\[ 
  \Spec \Gamma_K \cong \Spec L_K \times_{\MFG^K} \Spec L_K \cong \Spec L_K \times_{\MFG^H} \Spec L_K \cong \Spec L_K\tensor_{L_H}\Gamma_H \tensor_{L_H}L_K,
\]
This is equivalent to the isomorphism $\Gamma_K\cong L_K\tensor_{L_H,\eta_L}\Gamma_H \tensor_{L_H,\eta_R}L_K$. When $K\cong \ker V$, the invariance of the ideal $(e_V)\subseteq L_H$ implies that $\Gamma_K\cong\Gamma_H/e_V$.
\end{proof}

\begin{remark}\label{rmk:geometric meaning}
  From the viewpoint of moduli stacks, \Cref{lem:closed-basechange} follows from the affineness of the closed immersion $\MFG^K\hookrightarrow\MFG^H$. For finite abelian $H$ and a maximal subgroup $K<H$, the a quotient character $u:H\to H/K$ generates $\ker(H^\vee\to K^\vee)$, so the vanishing locus of $e_u$ in $\MFG^H$ is precisely the image of $\MFG^K$.Then \Cref{lem:maximal-euler-suffices} identifies the complement of these closed substacks with the open substack $\MFG^{H/H}$:
  \[
  \MFG^H\setminus\bigcup_{i=1}^r\MFG^{K_i}
  \cong \MFG^{H/H}.
  \]
  The gluing data and the inductive steps can therefore be visualized in \Cref{fig:finite-abelian-gluing}.  Here, $K_1,\dots,K_r$ are maximal subgroups of $H$. The blue node represents the open substack $\mathcal{M}_{fg}^{H/H}$. The inner contours indicate the closed substacks $\mathcal{M}_{fg}^{K_i}$, and the dashed lines indicate induction through smaller subgroups.
  \begin{figure}[H]
    \centering
    \begin{tikzpicture}[
    yscale=0.75, 
        line cap=round,
        line join=round,
        vertex/.style={
            circle,
            draw=black,
            line width=0.7pt,
            minimum size=12mm,
            inner sep=1pt,
            font=\large
        },
        wide vertex/.style={
            vertex,
            ellipse,
            minimum width=19mm,
            minimum height=12mm
        },
        enclosure/.style={
            draw=black,
            fill=none,
            line width=0.65pt
        },
        induction/.style={
            draw=black,
            -, 
            dashed,
            line width=0.7pt
        },
        stack label/.style={font=\large}
    ]
        \draw[enclosure, line width=0.8pt]
            (0,0) ellipse[x radius=5.9cm, y radius=4.55cm];
        \node[stack label] at (0,5.10)
            {$\MFG^{H}$};

        \draw[enclosure]
            (-3.80,0.60)
            .. controls (-3.80,1.91) and (-2.20,1.91) .. (-2.20,0.60)
            .. controls (-2.20,-0.50) and (1.25,-1.20) .. (1.25,-2.60)
            .. controls (1.25,-4.00) and (-1.25,-4.00) .. (-1.25,-2.60)
            .. controls (-1.25,-1.50) and (-3.80,-0.50) .. cycle;

        \draw[enclosure]
            (-2.10,0.60)
            .. controls (-2.10,1.91) and (-0.50,1.91) .. (-0.50,0.60)
            .. controls (-0.50,-0.60) and (1.10,-1.50) .. (1.10,-2.60)
            .. controls (1.10,-3.84) and (-1.10,-3.84) .. (-1.10,-2.60)
            .. controls (-1.10,-1.45) and (-2.10,-0.60) .. cycle;

        \draw[enclosure]
            (2.20,0.60)
            .. controls (2.20,1.91) and (3.80,1.91) .. (3.80,0.60)
            .. controls (3.80,-0.55) and (1.20,-1.32) .. (1.20,-2.60)
            .. controls (1.20,-4.16) and (-1.20,-4.16) .. (-1.20,-2.60)
            .. controls (-1.20,-1.25) and (2.20,-0.55) .. cycle;

        \draw[induction] (-3,0.6) -- (0,-2.6);
        \draw[induction] (-1.3,0.6) -- (0,-2.6);
        \draw[induction] (3,0.6) -- (0,-2.6);

        \node[wide vertex, fill=blue!10] (H) at (0,3.0) {$H$};
        \node[vertex, fill=red!10] (K1) at (-3,0.6) {$K_1$};
        \node[vertex, fill=red!10] (K2) at (-1.3,0.6) {$K_2$};
        \node[vertex, fill=red!10] (Kr) at (3,0.6) {$K_r$};
        \node[wide vertex, fill=red!10] (E) at (0,-2.6) {$\{e\}$};

        \node[font=\Large] at (0.9,0.6) {$\cdots$};
        \node[font=\Large] at (0.70,-0.20) {$\vdots$};

        \node[stack label, anchor=west] at (1.25,3.0)
            {$\MFG^{H/H}$};
        \node[stack label, anchor=east] at (-4.05,0.65)
            {$\MFG^{K_1}$};
        \node[stack label] at (-1.3,2.10)
            {$\MFG^{K_2}$};
        \node[stack label, anchor=west] at (4.05,0.65)
            {$\MFG^{K_r}$};
    \end{tikzpicture}
    \caption{Gluing data for a finite abelian group $H$}
    \label{fig:finite-abelian-gluing}
\end{figure}
\end{remark}

\subsection{The equivariant Landweber exact functor theorem}
Let $N$ be a graded $L_{A}$-module. We define the following three conditions for $N$:
\begin{itemize}[leftmargin=4em]
  \item[$(\mathrm{EU}_0)$] Euler exactness: For every $p$ and every $K\lessdot_p H\leq A$, $(e_{H/K},d_{H/K})$ is an exact pair of zero divisors on $N_{H,(p)}$, i.e.,
    \[
      \ann_{N_{H,(p)}}(e_{H/K})=d_{H/K}N_{H,(p)}, \qquad \ann_{N_{H,(p)}}(d_{H/K})=e_{H/K}N_{H,(p)}.
    \]
  \item[$(\mathrm{EU}_{\infty})$] Euler exactness at all heights: For every $p$, and for every $K\lessdot_p H\leq A$ and every $n\geq0$,the pair
    \[
      \bigl(e_{H/K}^{p^n},d_{H/K}^{(p,n)}\bigr)
    \]
  is exact on $N_H/I_{p,n}N_H$.
  \item[$\mathrm{(LE)}$] Classical Landweber exactness: For every $H\leq A$ and every $n\geq0$, multiplication by $v_{p,n}$ is injective on $\Phi^H N_H/I_{p,n}\Phi^H N$.
\end{itemize}

\begin{lemma}\label{lem:automatic-edge}
Let $K\mathrel{\lessdot_q}H$ with $q\neq p$.  After $p$-localization, the
pair $(e_{H/K},d_{H/K})$ is an exact pair on every $L_{H,(p)}$-module.
\end{lemma}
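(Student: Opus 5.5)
The idea is to show that, after inverting $p$ with $p\neq q$, one of the two elements $e_{H/K}, d_{H/K}$ becomes a unit in $L_{H,(p)}$ and the other becomes $0$ in the relevant sense. Then the exact-pair conditions on any module are automatic: if $x$ is a unit and $y=0$, then $\ker(x)=0=yM$ and $\ker(y)=M=xM$. Since $e_{H/K}$ and $d_{H/K}$ annihilate each other in $L_H$ (\cref{prop:universal-edge}), we have $e_{H/K}d_{H/K}=0$. So it suffices to show that $d_{H/K}$ is a unit in $L_{H,(p)}$; then $e_{H/K}=d_{H/K}^{-1}\cdot 0=0$ there.

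To prove $d_{H/K}$ is invertible after $p$-localization, I will use the factorization from the proof of \cref{prop:universal-edge}. Take $(\widetilde H,u)$ as in \cref{lem:adapted-group}, with $q$ in place of $p$, so that
\[
e_{qu}=u^*(\psi_q)\,e_u,
\]
and $d_{H/K}$ is the image of $u^*(\psi_q)$ under $L_{\widetilde H}\to L_H$. It suffices to show that $\psi_q$ is a unit in $L_{\bbT,(p)}/(e_{\tau^q})$, since we can then pull back along $u$ and restrict.

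For this I will use the completion/Euler-class structure. In $L_\bbT/(e_{\tau^q})=L_{C_q}$, the restriction of $\psi_q$ to the trivial group is $q$, which is a unit after $p$-localization. So $\psi_q\equiv q$ modulo $\ker(\res_1^{C_q})=(e_\tau)$. It therefore suffices that $e_\tau$ is nilpotent modulo $\psi_q$ in $L_{C_q,(p)}$, or directly that $\psi_q$ is invertible.

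The cleanest route is to use the image of the transfer: $d=\operatorname{tr}_1^{C_q}(1)$ satisfies $d^2=q\,d$. This follows from the double coset formula, $\res\operatorname{tr}(1)=q$ and Frobenius reciprocity: $d\cdot d=\operatorname{tr}(\res d)=\operatorname{tr}(q)=qd$. Hence $\varepsilon=d/q$ is an idempotent in $L_{C_q,(p)}$. This gives a splitting
\[
L_{C_q,(p)}\cong \varepsilon L\times(1-\varepsilon)L.
\]
Here $d$ is a unit on the first factor and is $0$ on the second, and $e_\tau$ satisfies $e_\tau d=0$, so $e_\tau$ vanishes on the first factor.

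The same splitting pulled back to $H$ decomposes any $L_{H,(p)}$-module $M$ as $\varepsilon M\oplus(1-\varepsilon)M$. On the first summand $d$ is a unit and $e=0$; on the second, $d=0$, and I need $e$ to be a unit there. That last point is the main obstacle. My plan for it is to identify $(1-\varepsilon)L_{C_q,(p)}$ with $\Phi^{C_q}L_{(p)}$, i.e. to show that $e_\tau$ is invertible modulo $d$. For this I will use $L_{C_q}/(d)$ together with the tom Dieck splitting / Hausmann–Meier description $L_{C_q}[e^{-1}]\cong\Phi^{C_q}L$, and argue that $1-\varepsilon$ is the idempotent cutting out the locus where $e_\tau$ is invertible, since $\ann(e)=(d)=(\varepsilon)$.

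Once this is established, $x=e$ and $y=d$ act on the two summands as (unit, $0$) and ($0$, unit). The kernel/image equalities then hold summandwise, so $(e_{H/K},d_{H/K})$ is an exact pair on every module.
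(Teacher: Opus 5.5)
Your proposal has a genuine gap, and it sits exactly at the step you flag as the main obstacle. First, discard the opening reduction, because it is false. $d_{H/K}$ cannot be a unit in $L_{H,(p)}$, and likewise $\psi_q$ cannot be a unit in $L_{\bbT,(p)}/(e_{\tau^q})$. Together with $e_{H/K}d_{H/K}=0$ this would force $e_{H/K}=0$, hence $\Phi^HL_{(p)}=L_{H,(p)}[\Delta_H^{-1}]=0$, which is false. Your idempotent splitting correctly replaces this with the picture ($e=0$, $d$ a unit) on $\varepsilon L_{H,(p)}$ and ($d=0$, $e$ a unit) on $(1-\varepsilon)L_{H,(p)}$. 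But the second half of that picture is left unproved, and it is not a technicality: it is the whole content of the lemma. For $M=(1-\varepsilon)L_{H,(p)}$ one has $\ker(d)=M$, so exactness demands $eM=M$. Your sketched justification does not supply it. The equality $\ann(e)=(d)=(\varepsilon)$ only shows that $e$ is a non-zero divisor on $(1-\varepsilon)L$. For instance, in $R=k\times k[t]$ with $e=(0,t)$ and $\varepsilon=(1,0)$ one has $\ann(e)=(\varepsilon)$, yet $e$ is not a unit on $0\times k[t]$. Identifying $(1-\varepsilon)L$ with $L_{C_q,(p)}[e^{-1}]\cong\Phi^{C_q}L_{(p)}$ presupposes that $e$ is already invertible there, so that step is circular. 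An appeal to the tom Dieck splitting would be an external input you have not carried out.

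The missing idea is that $e$ and $d$ are comaximal once $q$ is inverted, and you already wrote down the fact that gives it: $\psi_q\equiv q \pmod{e_\tau}$. Equivalently, via $L_H/(e_{H/K})\cong L_K$ and $\res^H_K d_{H/K}=q$, one has $d=q+ce$ for some $c\in L_H$. Hence $q^{-1}d-q^{-1}ce=1$ in $L_{H,(p)}$. Reducing modulo $d$ shows $e$ is a unit modulo $d$, which closes your gap. Alternatively, the other half of \cref{prop:universal-edge}, namely $\ann(d)=(e)$, gives $(1-\varepsilon)L_{H,(p)}=\ann(\varepsilon)=eL_{H,(p)}$, which also does it. Once you have $q^{-1}d-q^{-1}ce=1$, however, neither the transfer identity $d^2=qd$ nor the idempotent is needed. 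For any module $M$, $em=0$ gives $m=d(q^{-1}m)$, and $dm=0$ gives $m=e(-q^{-1}cm)$; the reverse inclusions come from $ed=0$. This direct comaximality argument is the paper's proof.
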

\begin{proof}
Put $e=e_{H/K}$ and $d=d_{H/K}$. Under $L_H/eL_H\cong L_K$, the class $d$ restricts to
\[
 \res_1^{C_q}(\bar\psi_q)=q
\]
Hence the image of $d$ modulo $e$ is a unit in $L_{K,(p)}$.  Therefore there are $a,b\in L_{H,(p)}$ with
\[
  ae+bd=1.
\]
For any module $M$ and $m\in\ker(e)$, this identity gives $m=bdm\in dM$; the reverse inclusion follows from $ed=0$.  The same argument with $e$ and $d$ interchanged gives $\ker(d)=eM$. 
\end{proof}

\begin{theorem}\label{thm:finite equivariant-LEFT}
Let $A$ be a finite abelian group, and let $N$ be a graded $L_{A}$-module.  The following are equivalent.
\begin{enumerate}[label=\textup{(\roman*)}]
\item\label{item:p-homology} The functors $N\tensor_{L_{A}} MU_*^A(-)$ form a homology theory.
\item\label{item:p-flat} $N\tensor_{L_{A},\eta_L}\Gamma_{A}$ is flat over $L_{A}$ through $\eta_R$.
\item\label{item:p-comodule} $-\tensor_{L_{A}}N$ is exact on graded $(L_{A},\Gamma_{A})$-comodules.
\item\label{item:p-criterion} $N$ satisfies $\mathrm{EU}_{0}+\mathrm{LE}$. Explicitly, for every $K<H\leq A$ with $H/K\cong \bbZ/p$,
\[
\ker(e_{H/K}:N_{H,(p)}\to N_{H,(p)})=d_{H/K}N_{H,(p)}, \qquad \ker(d_{H/K}:N_{H,(p)}\to N_{H,(p)})=e_{H/K}N_{H,(p)},
\]
and, for every prime $p$, every $H\leq A$, and every $n\geq0$,
\[
  v_{p,n}:\Phi^H N_H/I_{p,n}\Phi^H N \longrightarrow \Phi^H N_H/I_{p,n}\Phi^H N
\]
is injective.
\item\label{item:p-allheight} $N$ satisfies $\mathrm{EU}_{\infty}+\mathrm{LE}$.
\end{enumerate}
If $N=R$ is an evenly graded $L_{A}$-algebra, these conditions are also equivalent to flatness of $\Spec R\to\MFG^A$.
\end{theorem}
\begin{proof}
  We prove the theorem via \ref{item:p-flat} $\Longrightarrow$ \ref{item:p-allheight} $\Longrightarrow$ \ref{item:p-criterion} $\Longrightarrow$ \ref{item:p-flat}. The equivalence of \ref{item:p-flat} and \ref{item:p-comodule} is given by Hovey-Strickland\cite[Lemma 2.2]{hovey2005comodules}, and the equivalence of \ref{item:p-homology} and \ref{item:p-comodule} is proved by \Cref{prop:homology-implies-flat} since $(L_A, \Gamma_A)$ is flat.

  \ref{item:p-flat} $\Longrightarrow$ \ref{item:p-allheight}: After the base change via $\MFG^H \to \MFG^A$, for any $H<A$, tensoring with $N_H$ is exact on $(L_{H},\Gamma_{H})$-comodules. Then tensoring with $\Phi^H N$ is exact on $(\Phi^H L, \Phi^H \Gamma)$-comodules. Since there is an equivalence of Hopf algebroids 
  \[
  (L, \Gamma) \simeq (\Phi^H L, \Phi^H \Gamma)
  \]
  since there is an equivalence of stacks associated to these hopf algebroids $\MFG \simeq \MFG^{H/H}$. Then tensoring with $\Phi^H N$ is exact on $(L, \Gamma)$-comodules and the classical Landweber exact functor theorem implies that $N$ satisfies $(\mathrm{LE})$. For any $K\lessdot_p H$ and $n\geq 0$, since $(d_{H/K}^{(p,n)})$ and $(e_{H/K}^{p^n})$ are invariant, there are short exact sequences of $(L_{H,(p)},\Gamma_{H,(p)})$-comodules
  \[
  0\to (e_{H/K}^{p^n}) \to L_{H,(p)}/I_{p,n} \to L_{H,(p)}/(I_{p,n},e_{H/K}^{p^n}) \to 0,
  \] 
  \[
  0 \to (d_{H/K}^{(p,n)}) \to L_{H,(p)}/I_{p,n}\to L_{H,(p)}/(I_{p,n},d_{H/K}^{(p,n)})\to 0.
  \]
  Tensoring the two short exact sequences with $N_H$ together with the isomorphism $(d_{H/K}^{(p,n)}) \cong L_{H,(p)}/(I_{p,n},e_{H/K}^{p^n})$ imply that $(e_{H/K}^{p^n},d_{H/K}^{(p,n)})$ is an exact pair of zero divisors over $N_H$, i.e., $N$ satisfies $(\mathrm{EU}_\infty)$.

  \ref{item:p-allheight} $\Longrightarrow$ \ref{item:p-criterion} follows directly from the definitions, since the condition $(EU_0)$ is part of $(EU_\infty)$.

  \ref{item:p-criterion} $\Longrightarrow$ \ref{item:p-flat}: We prove this by induction on $|H|$. Assuming that $N$ satisfies $\mathrm{EU}_{0}+\mathrm{LE}$. For each $H\leq A$, put
  \[
  M_H=N_H\tensor_{L_{H},\eta_L}\Gamma_{H},
  \]
  with its $\eta_R$-module structure.  For $H=1$, $\mathrm{LE}$ is the classical Landweber condition.which gives the flatness of $M_1$.

  Let $H\neq 1$ and assume the claim for all proper subgroups of $H$.  List all maximal subgroups as $K_1,\ldots,K_m$, and write
  \[
  q_i=|H/K_i|, \qquad e_i=e_{H/K_i}, \qquad d_i=d_{H/K_i}.
  \]
  Then for any prime $p$, $(e_i,d_i)$ is an exact pair on $N_{H,(p)}$ is by hypothesis $(\mathrm{EU}_{0})$ and \cref{lem:automatic-edge}.  Since $\Gamma_{H,(p)}$ is flat over $L_{H,(p)}$, $(e_i,d_i)$ is an exact pair on $M_{H,(p)}$. Note that for every $i$, $(e_i)L_H \cong \ker(L_H\to L_{K_i})$, then \Cref{lem:closed-basechange} gives 
  \[ 
  M_{H,(p)}/e_iM_{H,(p)}\cong N_{K_i,(p)}\tensor_{L_{K_i,(p)},\eta_L}\Gamma_{K_i,(p)} =M_{K_i,(p)}.
  \]
  The induction hypothesis makes this flat over $L_{K_i}\cong L_{H}/(e_i)$.

  By \cref{lem:maximal-euler-suffices}, localization at $e_1\cdots e_m$ is equivalent to the localization at all non-trivial Euler classes. The condition $(\mathrm{LE})$ and the classical Landweber exact functor theorem show that 
  \[
  M_{H,(p)}[\Delta_H^{-1}] \cong (\Phi^H N_{(p)} \otimes_{L_{(p)}} \Gamma_{(p)}) \otimes_{L_{(p)}} \Phi^H \Gamma_{(p)}\cong \Phi^H M_{(p)}
  \]
  is flat over $\Phi^H L_{(p)}$. Moreover \cref{lem:tor-exact-pair} gives
  \[
  \Tor_j^{L_{H,(p)}}(L_{K_i,(p)},M_{H,(p)})=0 \qquad(j>0)
  \]
  for every $i$.  Then \cref{lem:finite-gluing} implies that $M_{H,(p)}$ is flat over $L_{H,(p)}$ for all prime $p$, thus $M_H$ is flat over $L_H$.  Taking $H=A$ proves \ref{item:p-flat} and completes the proof of the theorem.
\end{proof}
\begin{corollary}\label{cor:representation}
  Every graded $L_A$-module $N$ satisfying $(\mathrm{EU}_0)+(\mathrm{LE})$ admits a representing genuine $A$-spectrum $E_A$ with coefficients
  \[
  \pi_*^H E_A \cong L_H\tensor_{L_A}N.
  \]
\end{corollary}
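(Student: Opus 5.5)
The representing spectrum comes from Brown representability, and the coefficient identification comes from evaluating on orbits.

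By \Cref{thm:finite equivariant-LEFT}, the hypothesis $(\mathrm{EU}_0)+(\mathrm{LE})$ on $N$ implies that
\[
h_N(X)=N\tensor_{L_A}MU_*^A(X)
\]
is a homology theory on $\Sp^A$. It is therefore exact, takes cofiber sequences to long exact sequences, has suspension isomorphisms, and commutes with arbitrary wedges, since $MU_*^A$ does and tensor products commute with direct sums.

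Genuine $A$-spectra form a compactly generated triangulated category, with compact generators the orbits $\Sigma^\infty_+ A/H$ for $H\leq A$. By Brown representability for homology theories on such a category, there is an $A$-spectrum $E_A$ with natural isomorphisms $h_N(X)\cong \pi_*^A(E_A\wedge X)$. I would cite the Neeman/Margolis form for homology functors. The only subtlety is that homological Brown representability determines $E_A$ up to possibly non-unique phantom maps. That affects uniqueness of $E_A$ but not its existence, and only existence is claimed.

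To identify the coefficients, I evaluate on orbits. The Wirthmüller isomorphism gives $\pi_*^A(E_A\wedge A/H_+)\cong\pi_*^H E_A$, and likewise $MU_*^A(A/H_+)\cong \pi_*^H MU_A = L_H$ by Hausmann's theorem applied to $H$. Hence
\[
\pi_*^H E_A\cong N\tensor_{L_A}MU_*^A(A/H_+)\cong N\tensor_{L_A}L_H .
\]
The step that needs checking is that the $L_A$-module structure on $MU_*^A(A/H_+)\cong L_H$ is the one given by restriction $L_A\to L_H$. This holds because the action of $\pi_*^A MU_A$ on $MU_*^A(A/H_+)$ corresponds, under the induction/Wirthmüller adjunction, to restricting classes to $H$.

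I expect the main obstacle to be purely bookkeeping: confirming this module-structure compatibility and the gradings. The substantive input is entirely \Cref{thm:finite equivariant-LEFT}.
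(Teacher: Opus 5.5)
Your proposal is correct and follows essentially the paper's route. The paper also gets a homology theory from \Cref{thm:finite equivariant-LEFT} and then applies Brown representability: it cites May's $RO(A)$-graded version applied to $E_V(X)=(MU_A)_0(S^{-V}\wedge X)\otimes_{L_A}N$, which is equivalent to your $\mathbb{Z}$-graded formulation. Your Wirthm\"uller evaluation on $A/H_+$ makes explicit the coefficient identification $\pi_*^H E_A\cong L_H\otimes_{L_A}N$ that the paper leaves implicit. One precision: homological Brown representability needs more than compact generation, since Neeman's theorem requires the compact objects to form a countable category; this does hold for genuine $A$-spectra with $A$ finite.
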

\begin{proof}
  When $N$ satisfies $(\mathrm{EU}_0)+(\mathrm{LE})$, we can define an $RO(A)$-graded homology theory $E$ by 
  \[
  E_V(X):= (MU_A)_0(S^{-V}\wedge X)\otimes_{L_A} N
  \]
  for all genuine $A$-spectra $X$ and $V\in RO(A)$. The Brown representability theorem \cite[Chapter XIII, Corollary 3.5]{may1996equivariant} gives a genuine $A$-spectrum $E_A$ such that $E_V(X)\cong \pi_V^A(E_A \wedge X)$. 
\end{proof}
\begin{corollary}\label{cor:spectra level representation}
  Let $E$ be a complex oriented $A$-spectrum $E$. Then 
  \[
  E_*(X) \cong (MU_A)_*(X)\otimes_{L_A} \pi_*^A E
  \]
  if and only if $\pi_*^A E$ is landweber exact, and for any $B\leq A$, $\pi_*^B E \cong L_B\otimes_{L_A} \pi_*^A E$.
\end{corollary}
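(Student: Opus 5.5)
We prove the two implications separately, comparing $E$ with the Landweber exact theory built from $N:=\pi_*^A E$.

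\emph{Forward direction.} Assume $E_*(X)\cong (MU_A)_*(X)\otimes_{L_A}\pi_*^A E$ naturally in $X$. Since $E_*(-)$ is a homology theory, so is $N\otimes_{L_A}MU_*^A(-)$. By \Cref{thm:finite equivariant-LEFT} (i)$\Rightarrow$(iv), $N$ is Landweber exact. For the coefficient statement, take $X=\Sigma^\infty_+ A/B$ for $B\leq A$. The left side is
\[
E_*^A(A/B_+)\cong\pi_*^B E
\]
by the Wirthmüller/induction isomorphism. The right side is
\[
(MU_A)_*^A(A/B_+)\otimes_{L_A}N\cong \pi_*^B MU_A\otimes_{L_A}N\cong L_B\otimes_{L_A}N,
\]
using Hausmann's identification $\pi_*^B MU_A\cong L_B$ and the fact that the $L_A$-module structure is given by restriction. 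This proves the forward direction.

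\emph{Reverse direction.} Assume $N$ is Landweber exact and $\pi_*^B E\cong L_B\otimes_{L_A}N$ for every $B\leq A$. Let $E'$ be the spectrum of \Cref{cor:representation}, so that $E'_*(X)=MU_*^A(X)\otimes_{L_A}N$. The complex orientation of $E$ gives a ring map $MU_A\to E$, and hence a natural transformation
\[
\theta_X\colon MU_*^A(X)\otimes_{L_A}N\longrightarrow E_*^A(X),\qquad [x]\otimes n\mapsto n\cdot\bar x .
\]
Here $\bar x$ is the image of $x$ under $MU_A\wedge X\to E\wedge X$, and the product uses the $\pi_*^AE$-module structure of $E_*^A(X)$. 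Both the source and the target of $\theta$ are homology theories on $\Sp^A$; the source is one by Landweber exactness and \Cref{thm:finite equivariant-LEFT}. It is also routine to check that $\theta$ is compatible with the suspension isomorphisms, since it is built from smash products of spectra.

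On the orbits $X=A/B_+$ and their suspensions, $\theta$ is exactly the assumed isomorphism $L_B\otimes_{L_A}N\to\pi_*^BE$. Two steps remain to be checked.
\begin{enumerate}
\item This map must be the given abstract isomorphism. If it is not, we interpret the hypothesis as saying that the canonical map induced by the orientation is an isomorphism.
\item Naturality must hold for the $RO(A)$ or $\mathbb Z$-graded structure on orbits.
\end{enumerate}

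We then extend from orbits to all of $\Sp^A$. Genuine $A$-spectra are generated under cofiber sequences, suspensions (including desuspensions) and filtered colimits by the orbits $\Sigma^\infty_+ A/B$. A natural transformation of homology theories that is an isomorphism on all $\Sigma^{n} A/B_+$, $n\in\mathbb Z$, is therefore an isomorphism on all finite spectra by the five lemma and induction on cells. It is then an isomorphism on all spectra because both sides commute with filtered homotopy colimits; for the source this holds because tensor products commute with colimits.

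The main obstacle is producing $\theta$ as a genuine natural transformation of homology theories that agrees with the hypothesised isomorphism on orbits. The hypothesis only gives abstract isomorphisms $\pi_*^B E\cong L_B\otimes_{L_A}N$, so we must show these are induced by the orientation $MU_A\to E$. We will argue this by noting that the orientation map $L_B=\pi_*^BMU_A\to\pi_*^BE$ is a ring map compatible with restriction, so it factors through $L_B\otimes_{L_A}\pi_*^A E$, and we interpret the hypothesis as this factorization being an isomorphism.
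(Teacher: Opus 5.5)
Your proof is correct, provided the coefficient hypothesis is read as you propose: the canonical map $L_B\otimes_{L_A}\pi_*^AE\to\pi_*^BE$ induced by the orientation $MU_A\to E$ must be an isomorphism. For the two directions to match, the natural isomorphism in the first condition should likewise be the one induced by the orientation. Evaluating it on $A/B_+$ then gives exactly this canonical map.

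Your route differs from the paper's. The paper simply cites \Cref{thm:finite equivariant-LEFT} and \Cref{cor:representation}. The theorem gives Landweber exactness of $\pi_*^AE$, and Brown representability gives a spectrum $E_A$ representing $MU_*^A(-)\otimes_{L_A}\pi_*^AE$ with $\pi_*^HE_A\cong L_H\otimes_{L_A}\pi_*^AE$. That citation is short. By itself, however, it only produces a second spectrum with the same coefficient groups as $E$ and no map between them. So it does not yet identify $E_*^A(-)$ with $MU_*^A(-)\otimes_{L_A}\pi_*^AE$.

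You bypass Brown representability entirely; your $E'$ is never actually used. Instead you build the comparison transformation $\theta$ from the orientation and note that it is the canonical map on orbits. You then conclude because the full subcategory of $\Sp^A$ on which $\theta$ is an isomorphism is localizing and contains the generators $\Sigma^nA/B_+$. It is closed under shifts, under cofibres by the five lemma, and under arbitrary wedges, since both theories satisfy the wedge axiom. This supplies exactly the comparison map that the paper's one-line proof leaves implicit. It also shows that only the Landweber exactness part of the main theorem is needed, not the representability corollary.

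Your item (2) is a non-issue: $\theta$ is natural on all of $\Sp^A$, so compatibility with maps between orbits is automatic.
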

\begin{proof}
  The corollary is followed by \Cref{thm:finite equivariant-LEFT,cor:representation}.
\end{proof}

\section{Landweber exactness for abelian compact Lie groups}\label{sec:compact}
In this section, we prove an equivariant Landweber exact functor theorem for abelian compact Lie groups. Let $G$ be a rank $m$ abelian compact Lie group. Fix a decomposition
\[
G=G^\circ \times \pi_0G = T\times A,\qquad T=\mathbb T^m,
\]
where $A$ is finite abelian. Note that every Euler class $e_\tau$ of $\tau\in (T)^\vee\cong \bbZ^m$ is a non-zero divisor. Then for a $G$-module $N$, it suffices to consider the following additional condition:
\begin{itemize}[leftmargin=4em]
  \item[$(\mathrm{RE})$] Regular Euler classes: For any $r\leq m$ and any linearly independent $\{\tau_i\}_r \subset T^\vee \subset G^\vee$, the sequence $(e_{\tau_1},\cdots, e_{\tau_r})$ is a regular sequence over $N$.
\end{itemize}
Besides, $(\mathrm{EU}_0)$ and $(\mathrm{LE})$ should be replaced by the following condition:
\begin{itemize}[leftmargin=4em]
  \item[$(\mathrm{EU})$] For every $p$ and every $K\lessdot_p H\leq A$, $(\pi^* e_{H/K},\pi^* d_{H/K})$ is an exact pair of zero divisors on $N_{T\oplus H,(p)}$ via the pullback along $\pi^*\colon T\oplus H \to H$.
  \item[$\mathrm{(LE)}$] For every prime $p$, every closed subgroup $B\leq G$ and every $n\geq0$, multiplication by $v_{p,n}$ is injective on $\Phi^B N_B/I_{p,n}\Phi^B N_B$.
\end{itemize}
Note that there are infinitely many closed substacks of $\MFG^G$ and we can't use the same argument as in \Cref{lem:finite-gluing} to gluing them together one by one. In this case, we need the following gluing lemma.

\begin{lemma}[Arbitrary Euler gluing]\label{lem:compact-infinite-gluing}
Let $B$ be a commutative ring, $M$ a $B$-module, and $S$ the multiplicative set generated by an arbitrary family $\{f_\alpha\}_{\alpha\in J}$. Suppose that $S^{-1}M$ is flat over
$S^{-1}B$ and, for every $\alpha\in J$,
\[
M/f_\alpha M\text{ is flat over }B/(f_\alpha),
\qquad
\operatorname{Tor}_i^B(B/(f_\alpha),M)=0\quad(i=1,2).
\]
Then $M$ is flat over $B$.
\end{lemma}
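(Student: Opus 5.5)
The plan is to show $\Tor_1^B(Q,M)=0$ for every $B$-module $Q$. The argument combines two facts: every element of $\Tor_1^B(Q,M)$ is $S$-torsion, because $S^{-1}M$ is flat; and multiplication by each single generator $f_\alpha$ is injective on $\Tor_1^B(Q,M)$, because of the hypotheses along the closed loci. Since every $s\in S$ is a finite product of the $f_\alpha$, the two facts together force $\Tor_1^B(Q,M)=0$. This sidesteps the issue that $S^{-1}B$ is only a filtered colimit of the finite localizations $B[(f_{\alpha_1}\cdots f_{\alpha_k})^{-1}]$. Flatness of $S^{-1}M$ does not obviously descend to any finite stage, so an induction in the style of \Cref{lem:finite-gluing} is not available.

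\emph{Step 1 (torsion).} Localization is exact and commutes with $\Tor$. Hence
\[
S^{-1}\Tor_1^B(Q,M)\cong \Tor_1^{S^{-1}B}(S^{-1}Q,S^{-1}M)=0
\]
by flatness of $S^{-1}M$ over $S^{-1}B$. So every $x\in\Tor_1^B(Q,M)$ is killed by some $s=f_{\alpha_1}\cdots f_{\alpha_k}$.

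\emph{Step 2 (vanishing on the closed loci).} Fix $f=f_\alpha$ and let $Q'$ be any $B/(f)$-module. Use the base-change spectral sequence
\[
E^2_{ij}=\Tor_i^{B/(f)}\bigl(Q',\Tor_j^B(B/(f),M)\bigr)\Rightarrow \Tor_{i+j}^B(Q',M).
\]
The hypothesis gives $\Tor_j^B(B/(f),M)=0$ for $j=1,2$. The $j=0$ row is $\Tor_i^{B/(f)}(Q',M/fM)$, which vanishes for $i>0$ since $M/fM$ is flat over $B/(f)$. Therefore $E^2_{ij}=0$ whenever $i+j\in\{1,2\}$, so $\Tor_1^B(Q',M)=\Tor_2^B(Q',M)=0$ for every $B/(f)$-module $Q'$. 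This is precisely the range in which the hypotheses on $i=1,2$ are needed, as in the proof of \cite[Lemma 10.99.17]{stacks-project}.

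\emph{Step 3 (injectivity of $f$).} For an arbitrary $B$-module $Q$, factor multiplication by $f$ as $Q\twoheadrightarrow fQ\hookrightarrow Q$. The first map has kernel $Q[f]$, which is a $B/(f)$-module, so Step 2 gives $\Tor_1^B(Q[f],M)=0$. The long exact sequence then shows that $\Tor_1^B(Q,M)\to\Tor_1^B(fQ,M)$ is injective. The second map has cokernel $Q/fQ$, again a $B/(f)$-module, so $\Tor_2^B(Q/fQ,M)=0$. Hence $\Tor_1^B(fQ,M)\to\Tor_1^B(Q,M)$ is injective. The composite is multiplication by $f$ on $\Tor_1^B(Q,M)$, by naturality of $\Tor$ in $Q$, so it is injective.

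\emph{Step 4 (conclusion).} For $x$ and $s=f_{\alpha_1}\cdots f_{\alpha_k}$ as in Step 1, repeatedly applying Step 3 to $f_{\alpha_1},\dots,f_{\alpha_k}$ gives $x=0$. Thus $\Tor_1^B(-,M)$ vanishes identically, and $M$ is flat over $B$. The main point requiring care is Step 2, specifically checking the low-degree terms of the spectral sequence so that $\Tor_2$ vanishes as well as $\Tor_1$. If one wants to avoid spectral sequences, the same conclusion follows by resolving $Q'$ by free $B/(f)$-modules and applying the $\Tor$ hypotheses degree-wise via dimension shifting.
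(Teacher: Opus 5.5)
Your proof is correct and follows the same route as the paper. It shows that $\Tor_1^B(-,M)$ and $\Tor_2^B(-,M)$ vanish on $B/(f_\alpha)$-modules, uses the factorization $Q\twoheadrightarrow f_\alpha Q\hookrightarrow Q$ to make each $f_\alpha$ act injectively on $\Tor_1^B(Q,M)$, and combines this with $S^{-1}\Tor_1^B(Q,M)=0$. The only differences are cosmetic: the paper gets the vanishing on $B/(f_\alpha)$-modules from \cite[Lemma 10.99.8]{stacks-project} plus dimension shifting rather than the change-of-rings spectral sequence, and it asserts an isomorphism $\Tor_1^B(f_\alpha Q,M)\cong\Tor_1^B(Q,M)$ where your injectivity already suffices.
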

\begin{proof}
For any $\alpha \in J$, let $N_\alpha$ be an $B/f_\alpha$-module. Choose a short exact sequence
\[
0\to K \to \bigoplus B/f_\alpha \to N \to 0,
\]
then \cite[Lemma 10.99.8]{stacks-project} implies that $\operatorname{Tor}_1^B(N,M)=0$, and 
\[ 
\operatorname{Tor}_2^B(N,M)\cong \operatorname{Tor}_1^B(K,M)\cong 0.
\]
Now for any $B$-module $N$, consider the short exact sequence of $B$-modules
\[
0\longrightarrow N[f_\alpha]\longrightarrow N\xrightarrow{f_\alpha}f_\alpha N
\longrightarrow0,
\qquad
0\longrightarrow f_\alpha N\longrightarrow N\longrightarrow N/f_\alpha N
\longrightarrow0.
\]
The long exact sequences of $\operatorname{Tor}$ associated to these two short exact sequences give a monomorphism followed by an isomorphism
\[
\operatorname{Tor}_1^B(N,M)\lhook\joinrel\longrightarrow
\operatorname{Tor}_1^B(f_\alpha N,M)\xrightarrow{\ \cong\ }\operatorname{Tor}_1^B(N,M).
\]
The composition is induced by the multiplication by $f_\alpha$ on $N$. Consequently every $f_\alpha$, and hence every $s\in S$, acts injectively on $\operatorname{Tor}_1^B(N,M)$. Note that the flat base change for $\operatorname{Tor}$ and the flatness of $S^{-1}M$ give
\[
S^{-1}\operatorname{Tor}_1^B(N,M)\cong
\operatorname{Tor}_1^{S^{-1}B}(S^{-1}N,S^{-1}M)=0.
\]
Thus $\operatorname{Tor}_1^B(N,M)=0$ for every $B$-module $N$, which proves the flatness of $M$ over $B$.
\end{proof}

Now we can define the gluing data for abelian compact Lie groups. For any closed subgroup $K< H \leq G$ such that $H/K$ is cyclic of prime order, define
\[
e_{H/K}=e_{\chi_{H/K}},\qquad
d_{H/K}=\operatorname{tr}_K^H(1),
\]
where $\chi_{H/K}:H\to\mathbb T$ is the resulting quotient character. In particular, for any given $H$, the set 
\[
\{K<H:H/K\text{ is cyclic of prime order}\}
\]
is finite, which we also denote by $\Max(H)$. Then with the same argument, \Cref{lem:adapted-group,prop:universal-edge,prop:d-invariant,lem:closed-basechange,lem:automatic-edge} hold for abelian compact Lie groups $H$ and closed subgroups $K\leq H$. It remains to show that for every subgroup $H\leq G$ and every $K\in\Max(H)$, the pair the pair $(e_{H/K},d_{H/K})$ is exact on $N_{H,(p)}$ for an $L_G$-module $N$ satisfying $(\mathrm{RE})$ and $(\mathrm{EU})$. 

\begin{lemma}\label{lem:compact-exact-regular}
Suppose that $(e,d)$ is an exact pair on a $D$-module $M$.
\begin{enumerate}[label=\textup{(\roman*)},leftmargin=3em]
\item If $f$ is injective on $M$, then $(e,d)$ is exact on $M/fM$.
\item Every $M$-regular sequence is regular on $M/eM$.
\end{enumerate}
\end{lemma}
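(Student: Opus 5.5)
The plan is to prove (i) by a direct element chase that uses only the two kernel identities $\ker(e)=dM$ and $\ker(d)=eM$. Then (ii) follows by induction on the length of the sequence, with (i) supplying the inductive step. Throughout, $D$ is commutative, and we record once that $edM=0$ and $deM=0$, since $dM=\ker(e)$ and $eM=\ker(d)$.

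\emph{Step (i).} We must show $\ker(e\colon M/fM\to M/fM)=d(M/fM)$, and the same with $e,d$ swapped. The inclusion $\supseteq$ is immediate from $edM=0$. For $\subseteq$, suppose $em=fn$ with $m,n\in M$.
\begin{enumerate}[label=\textup{(\alph*)},leftmargin=3em]
\item Multiply by $d$: $f\,dn=d\,em=0$. Since $f$ is injective on $M$, this gives $dn=0$.
\item Hence $n\in\ker(d)=eM$; write $n=en'$.
\item Then $e(m-fn')=0$, so $m-fn'\in\ker(e)=dM$.
\item Therefore $m\in dM+fM$, as required.
\end{enumerate}
The statement with $e$ and $d$ interchanged follows by the symmetric argument, since the hypotheses are symmetric in $(e,d)$.

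\emph{Step (ii).} Let $(x_1,\dots,x_r)$ be $M$-regular. First we show that $x_1$ is injective on $M/eM$.
\begin{enumerate}[label=\textup{(\alph*)},leftmargin=3em]
\item Suppose $x_1m=em'$. Applying $d$ gives $x_1\,dm=d\,em'=0$.
\item Since $x_1$ is injective on $M$, we get $dm=0$.
\item Hence $m\in\ker(d)=eM$, which is the required injectivity.
\end{enumerate}
For the remaining terms, put $M'=M/x_1M$. By (i) with $f=x_1$, the pair $(e,d)$ is exact on $M'$. Moreover $(x_2,\dots,x_r)$ is $M'$-regular, and
\[
(M/eM)/x_1(M/eM)\cong M'/eM'.
\]
Induction on $r$ then shows that $(x_2,\dots,x_r)$ is regular on $(M/eM)/x_1(M/eM)$. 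Combined with the injectivity of $x_1$, this proves (ii).

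The only real point is the trick of multiplying by the partner element $d$ to transport a relation into $\ker(d)=eM$, used in both parts. If the paper's notion of regular sequence also requires $M/(x_1,\dots,x_r)M\neq0$, that condition does not transfer automatically to $M/eM$. In that case we would read ``regular'' here as the injectivity conditions only, which is all that is used later in the gluing argument.
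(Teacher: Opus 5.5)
Your proof is correct and follows the paper's argument. For (i), the paper applies the long exact homology sequence to $0\to\mathcal P(M)\xrightarrow{f}\mathcal P(M)\to\mathcal P(M/fM)\to0$, where $\mathcal P$ is the two-periodic $(e,d)$-complex, and your element chase is the explicit unwinding of that argument; for (ii), the paper obtains injectivity of $x_1$ on $M/eM$ from the injective map $M/eM\to M$, $\bar m\mapsto dm$, and then iterates using (i), exactly as you do. Your caveat about the nonvanishing condition $M/(x_1,\dots,x_r)M\neq0$ is apt, since the paper tacitly uses the injectivity-only notion of regular sequence.
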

\begin{proof}
Let $\mathcal P(M)$ be the chain complex with every term equal to $M$ and differentials alternating between $e$ and $d$. It is exact by hypothesis. If $f$ is injective on $M$, then
\[
0\longrightarrow\mathcal P(M)\xrightarrow{f}\mathcal P(M) \longrightarrow\mathcal P(M/fM)\longrightarrow0
\]
is a short exact sequence of complexes. The long exact sequence of homology groups proves \textup{(i)}.

For \textup{(ii)}, the map
\[
M/eM\longrightarrow M,\qquad \overline m\longmapsto dm,
\]
is injective. Thus the first element of an $M$-regular sequence is injective on $M/eM$. After quotienting $M$ by that element, part \textup{(i)} preserves the exact pair. Iterating this argument proves successive injectivity for the whole sequence on $M/eM$.
\end{proof}

\begin{lemma}\label{lem:compact exactness condition}
Let $N$ be an $L_G$-module satisfying $(\mathrm{RE})$ and $(\mathrm{EU})$. 
\begin{enumerate}[label=\textup{(\roman*)},leftmargin=3em]
  \item For every prime $p$, every closed subgroup $H\leq G$, and every $K\in\Max(H)$, the pair $(e_{H/K},d_{H/K})$ is exact on $N_{H,(p)}$. 
  \item For every closed subgroup $H\leq G$, whenever $\lambda_1,\lambda_2, \dots, \lambda_r\in H^\vee$ are linearly independent in $H^\vee\otimes \mathbb Q$, there Euler classes form a regular sequence on $N_H$.
\end{enumerate}
\end{lemma}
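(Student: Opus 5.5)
Both parts will be proved by passing from $G=T\times A$ to a closed subgroup $H$ one Euler class at a time, using \Cref{lem:compact-exact-regular} to carry exact pairs and regular sequences through each quotient. The basic structural fact is this. For a closed subgroup $H\leq G$, the kernel of $L_G\to L_H$ is generated by Euler classes of a generating set of $\ker(G^\vee\to H^\vee)$. We choose that generating set in a Smith normal form adapted to the splitting $G^\vee\cong T^\vee\oplus A^\vee$. After an automorphism of $G$ (changing the splitting if necessary), $H$ becomes a subgroup of the form $T'\times H'$ with $T'\leq T$ a subtorus. Equivalently, $H$ is cut out by characters $\tau_1,\dots,\tau_s$ that are linearly independent over $\mathbb Q$, together with torsion data coming from a finite group.

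\emph{Step 1 (reducing to the case $H\supseteq T$).} The characters $\tau_1,\dots,\tau_s$ can be chosen with $\tau_i\in T^\vee$, up to modifying the splitting. Condition $(\mathrm{RE})$ then says $(e_{\tau_1},\dots,e_{\tau_s})$ is $N$-regular. Since $e_{\tau_i}$ is a non-zero divisor on the corresponding Lazard rings, the successive quotients of $N$ are the modules $N_{\ker\tau_1}, N_{\ker\tau_1\cap\ker\tau_2},\dots$, by the exact sequence $L_G\xrightarrow{e_V}L_G\to L_{\ker V}\to0$. Next we use $(\mathrm{EU})$: the pair $(\pi^*e_{H'/K'},\pi^*d_{H'/K'})$ is exact on $N_{T\oplus H',(p)}$. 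Applying \Cref{lem:compact-exact-regular}(i) iteratively along the regular sequence $e_{\tau_i}$ transports this exactness to $N_{T'\oplus H',(p)}$. Here the regularity of the remaining $e_{\tau_i}$ on the intermediate module is exactly what \Cref{lem:compact-exact-regular}(ii) provides. So exactness of the pair holds on the $T'\times H'$-level module.

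\emph{Step 2 (part (i)).} Let $H=T'\times H'$ and $K\in\Max(H)$. Then $H/K$ is cyclic of prime order, and the quotient character $\chi_{H/K}$ restricted to $T'$ has finite order, hence is trivial because $T'$ is connected. Therefore $K=T'\times K'$ with $K'\lessdot_p H'$, and $e_{H/K}=\pi^*e_{H'/K'}$, $d_{H/K}=\pi^*d_{H'/K'}$, where $d$ is compatible with pullback as a transfer of $1$. Step 1 then gives exactness of $(e_{H/K},d_{H/K})$ on $N_{H,(p)}$ for $p=|H/K|$. For primes $q\neq p$, exactness is automatic by \Cref{lem:automatic-edge}.

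\emph{Step 3 (part (ii)).} Take $\lambda_1,\dots,\lambda_r\in H^\vee$ that are $\mathbb Q$-independent. Multiply them by the exponent of the torsion part; this turns them into characters pulled back from $T'$. For each $i$, $e_{\lambda_i}$ divides $e_{k\lambda_i}$, with $e_{k\lambda_i}=c\,e_{\lambda_i}$ where $c$ is a product of $\psi$-type factors. A sequence $(x_1y_1,\dots)$ being regular implies $(x_1,\dots)$ is regular on the relevant quotients, so it suffices to show that the Euler classes of $\mathbb Q$-independent characters of $T'$ form a regular sequence on $N_H$.

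The \textbf{main obstacle} is that $(\mathrm{RE})$ is only stated on $N=N_G$, so regularity must be transported to $N_H$. We do this by extending $\{\tau_1,\dots,\tau_s\}$ by lifts of the characters, so that together they form a linearly independent set in $T^\vee$ (possible after the splitting change). This gives a regular sequence on $N$ by $(\mathrm{RE})$, and permutation is harmless once we work gradedly-locally, since regular sequences of positive-degree homogeneous elements in graded modules are permutable. Quotienting by the first $s$ terms yields $N_{T'\times A}$. To pass further to $N_H$ we quotient by the pairs from part (i), using \Cref{lem:compact-exact-regular}(ii) inductively over a chain $H'=A_0\lessdot\cdots\lessdot A$, after $p$-localization at each prime. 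Regularity is a local property, so this suffices. I expect the bookkeeping of the Smith-normal-form change of splitting, and the graded permutability of the regular sequence, to be the delicate points.
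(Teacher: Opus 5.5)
\textbf{There is a genuine gap.} Your opening structural claim is false, and Steps 1--3 all rest on it. It is not true that, after an automorphism of $G$, every closed subgroup $H$ has the form $T'\times H'$ with $T'\leq T$ a subtorus and $H'\leq A$.
\begin{itemize}
\item For $G=\bbT$ (so $A=0$), the subgroup $H=\mu_p$ is not of this form.
\item In $G=\bbT\times C_2$, the cyclic group $H=\langle(i,1)\rangle\cong C_4$ is not of this form for either complement of $T$, by an exponent count.
\end{itemize}
A Smith normal form really gives $T'=\ker\tau_1\cap\dots\cap\ker\tau_s$, which is in general disconnected. Step 2 then fails, because it uses connectedness of $T'$ to force $K\supseteq T'$. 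In effect you only treat those $K\in\Max(H)$ for which $(e_{H/K},d_{H/K})$ is literally a restriction of an $(\mathrm{EU})$-pair. Take $G=\bbT$, $H=\mu_p$, $K=1$. Here $(\mathrm{EU})$ is vacuous, yet the lemma asserts that $(e_{H/K},d_{H/K})$ is exact on $N_{H,(p)}=N_{(p)}/e_{p\tau}N_{(p)}$. Nothing in your proposal produces this pair.

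\textbf{The missing idea} is a second source of exact pairs: factorizations of regular Euler classes. If $xy$ is a non-zero divisor on $M$, then so are $x$ and $y$, and $(x,y)$ is an exact pair on $M/xyM$.

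\emph{Case $A_H=A_K$.} Here $A_H,A_K$ are the projections of $H,K$ to $A$. Put $C=T\times A_H$. Both lattices $\ker(C^\vee\to H^\vee)\subseteq\ker(C^\vee\to K^\vee)$ are torsion-free, since $H$ and $K$ surject onto $A_H$. Choose stacked bases $p\lambda,\lambda_2,\dots,\lambda_r$ and $\lambda,\lambda_2,\dots,\lambda_r$ of these two lattices. Then $e_{p\lambda}=e_\lambda\cdot\lambda^*\psi_p$, and this factorization restricts to $(e_{H/K},d_{H/K})$ on $N_H$. You need $(e_{p\lambda},e_{\lambda_2},\dots,e_{\lambda_r})$ to be regular on $N_C$. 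This requires $(\mathrm{RE})$ for \emph{mixed} characters $\tau+\alpha\notin T^\vee$. It follows from $e_{s\lambda}=e_{s\tau}$ for $sA^\vee=0$, together with the fact that replacing an entry of a regular sequence by a factor keeps it regular. You use this trick in Step 3, but it is needed already in part (i).

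\emph{Diagonal case $A_H/A_K\cong C_p$.} Pass through $C_H=T\times A_H$. Since $\ker(C_H^\vee\to H^\vee)$ is torsion-free, $N_H$ is the quotient of $N_{C_H}$ by a regular sequence of Euler classes of $\mathbb Q$-independent, generally mixed, characters. The $(\mathrm{EU})$-pair for $A_H/A_K$ then restricts to $(e_{H/K},d_{H/K})$ by \Cref{lem:compact-exact-regular}(i).

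\emph{Part (ii).} The same replacement of $T'\times H'$ by $T\times A_H$ is what part (ii) needs for a general $H$.

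\emph{Minor point.} Your appeal to graded permutability of regular sequences is unjustified here: $N$ has no boundedness or finiteness hypothesis, and $L_G$ is not Noetherian. It is also unnecessary, since $(\mathrm{RE})$ is quantified over every ordering of a linearly independent set.
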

\begin{proof}
  (i) For the first statement, it suffices to consider the case $H/K\cong C_p$. let $A_H$ and $A_K$ be the projection of $H$ and $K$ to $A$, respectively. 
  
  (a) If $A_H=A_K=0$, then $K< H\leq T\cong \bbT^m$, and $H$ must be finite. Let 
  \[
  \Lambda_H:=\ker (T^\vee \to H^\vee), \Lambda_K:=\ker ( T^\vee\to K^\vee).
  \]
  We can choose a generator of $\Lambda_H$ given by $\{\lambda_1,\dots, \lambda_r\}$ such that the $r$-tuple $(\lambda_1,\dots,\lambda_r)$ are linearly independent, and $\Lambda_K=\langle p\lambda_1,\lambda_2,\dots, \lambda_r\rangle$. Then
  \[
  N_H \cong N_T/(e_{p\lambda_1},e_{\lambda_2}, \dots, e_{\lambda_r}),\quad N_K \cong N_T/(e_{\lambda_1},e_{\lambda_2}, \dots, e_{\lambda_r})
  \]
  and $e_{H/K}=\operatorname{Res}_H^T e_{\lambda_1}$. Since $e_{p\lambda_1}$ restrict to $0$ in $L_H$, $e_{p\lambda_1}=e_{\lambda_1}\cdot \psi_p$ such that $d_{H/K}=\operatorname{Res}_H^T \psi_p$. Since $e_{\lambda_1}$ is a nonzero divisor in $N_T$. Thus $(e_{\lambda_1}, \psi_p)$ is an exact pair of zero divisor in $N_T/e_{p\lambda_1}$. Note that $(\mathrm{RE})$ implies that $(e_{\lambda_2},\dots, e_{\lambda_r})$ is regular on $N_T/(e_{p\lambda_1})$, part~\textup{(i)} of \Cref{lem:compact-exact-regular} implies that $(e_H/K,d_{H/K})$ is exact on $N_H$. 

  (b) If $A_H=A_K\neq 0$, then $H\cong L\times B$ and $K\cong  W\times B$ for some $W\lessdotp L< T$ and $B\leq A$. For any subgroup $B\leq A$, choose a chain
  \[
  B=B_0<B_1<\cdots<B_r=A
  \]
  in which each $B_i$ is a maximal subgroup of $B_{i+1}$, so that $B_{i+1}/B_i\cong C_{q_i}$ for some prime $q_i$. At each step, restriction from $T\times B_{i+1}$ to $T\times B_i$ kills the Euler class $e_{B_{i+1}/B_i}\in L_{T\times B_{i+1}}$, which belongs to an exact pair. Applying part~\textup{(ii)} of \Cref{lem:compact-exact-regular} successively, we may therefore assume without loss of generality that $B=A$. Let 
  $$\Lambda_{K}:=\ker (G^\vee \to {K}^\vee)=\langle \lambda_1,\lambda_2,\dots,\lambda_r\rangle, \quad \Lambda_{H}=\Lambda_{K}/p\lambda_1.$$ 
  Here $\lambda_i=\tau_i\alpha_i$ for some $\tau_i\in T^\vee$ and $\alpha_i\in A^\vee$. 
  Since $A$ is finite, we can choose $s\in \mathbb Z$ such that $s A^\vee=0$. Then $$e_{s\lambda_i}=e_{s\tau_i}.$$
  Condition $(\mathrm{RE})$ ensures that the sequence $(e_{s\tau_1},\dots,e_{s\tau_r})$ is regular on $N$. For each $i$, we have $e_{s\lambda_i}=e_{\lambda_i}u_i$ for some $u_i\in L_G$, and hence the sequence $(e_{\lambda_1},\dots,e_{\lambda_r})$ is also regular on $N$. As in the case $K<H\leq T\cong\bbT^m$, it then follows from part~\textup{(i)} of \Cref{lem:compact-exact-regular} that $(e_{H/K},d_{H/K})$ is an exact pair on $N_H$.

  (c) Now assume that $A_H/A_K\cong C_p$. If $H\cong \bbT^r\times A$ and $K\cong \bbT^r\times B$ for some $B\lessdotp A$, then $(e_{H/K},d_{H/K})$ is exact on $N_H$, which is followed by $(\mathrm{EU})$ and \Cref{lem:compact-exact-regular}. In general, let $C_H=T\times A_H$ and $C_K=T\times A_K$. Then the pullback of $e_{A_H/A_K}$ belongs to an exact pair in $N_{C_H}$. As the case (b) above, we can choose a regular sequence of Eulers $(e_1,\dots,e_r)$ such that 
  \[
  N_H\cong N_{C_H}/(e_1,\dots, e_r).
  \]
  Since $K=H\cap C_K$, the quotient character $C_H \to A_H/A_K\cong C_p$ restricts to the quotient $H\to H/K$, thus $e_{H/K}$ is the restriction of $e_{C_H/C_K}$ and thus $(e_{H/K},d_{H/K})$ is exact on $N_H$ by part (i) of \Cref{lem:compact-exact-regular}.

  (ii) Let $A_H$ be the projection of $H$ to $A$ and let $C=T\times A_H$. Let 
  \[
  \Lambda_H =\ker (C^\vee\to H^\vee).
  \]
  $\Lambda_H$ is free since an element in $\Lambda_H$ belongs to \(A_H^\vee\) and vanishes on $H$ must be trivial since $H\to A_H$ is surjective. Then we can choose a basis $\chi_1,\dots, \chi_s$ such that $e_i=e_{\chi_i}$ form a regular sequence on $N_C$, and 
  \[
  N_H \cong N_C/(e_1,\dots, e_s).
  \]
  Suppose
  \[
  \lambda_1,\ldots,\lambda_r\in H^\vee
  \]
  are linearly independent in \(H^\vee\otimes\mathbb Q\), and choose lifts
  \[
  \widetilde\lambda_1,\ldots,\widetilde\lambda_r\in C^\vee.
  \]
  Then
  \[
  \widetilde\lambda_1,\ldots,\widetilde\lambda_r, \chi_1,\ldots,\chi_s
  \]
  are linearly independent in $ C^\vee\otimes\mathbb Q\cong T^\vee\otimes\mathbb Q$, and the sequence of Euler classes associated to these characters is regular on $N_C$. Then $e_{\lambda_1}, \dots, e_{\lambda_r}$ is regular on $N_H$. 
\end{proof}
As a result, for any subgroup $H$ and $\lambda\in H$ has infinite order or prime order, 
\[
  \operatorname{Tor}_j^{L_{H,(p)}}
  \bigl(L_{H,(p)}/(e_\tau),N_{H,(p)}\bigr)=0
  \qquad\text{for all }j>0.
\]
Now we can prove the following equivariant Landweber exact functor theorem for abelian compact Lie groups.
\begin{theorem}[Equivariant LEFT for abelian compact Lie groups]\label{thm:compact-LEFT}
Let $G\cong T\times A$, where $T=\bbT^m$ and $A$ is finite. For a graded $L_G$-module $N$, the following are equivalent:
\begin{enumerate}[label=\textup{(\roman*)},leftmargin=3em]
\item The functors $ N\tensor_{L_{G}} MU_*^G(-) $ form a homology theory.
\item The functor
\[
N\otimes_{L_G}- \colon \operatorname{Comod}_{(L_G,\Gamma_G)} \longrightarrow\operatorname{Mod}_R
\]
is exact.
\item $M_G(N):= N\otimes_{L_G,\eta_L}\Gamma_G$ is flat over $L_G$ through $\eta_R$.
\item $N$ satisfies the conditions $$(\mathrm{RE})+(\mathrm{EU})+ (\mathrm{LE}).$$
\end{enumerate}
\end{theorem}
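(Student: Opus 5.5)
The equivalences (i)$\Leftrightarrow$(ii)$\Leftrightarrow$(iii) are formal and follow exactly as in \Cref{thm:first equivalent conditions}. \Cref{prop:homology-implies-flat} gives (i)$\Rightarrow$(iii), since $\Gamma_G$ is flat over $L_G$. \cite[Lemma 2.2]{hovey2005comodules} gives (ii)$\Leftrightarrow$(iii). Applying (ii) to the comodule long exact sequence of a cofiber sequence gives (ii)$\Rightarrow$(i). The real content is therefore (iii)$\Leftrightarrow$(iv).

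For (iii)$\Rightarrow$(iv), I will argue by base change as in the proof of \Cref{thm:finite equivariant-LEFT}. Flatness of $M_G(N)$ pulls back along the closed immersions $\MFG^B\to\MFG^G$ by \Cref{lem:closed-basechange}, and along the open immersions $\MFG^{B/B}\simeq\MFG$, so each $\Phi^BN_B$ is classically Landweber exact. This gives (LE).

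For (RE), note that the ideals $(e_{\tau_1},\dots,e_{\tau_k})$ are invariant, and that $(e_{\tau_1},\dots,e_{\tau_r})$ is regular on $L_G$ because the $\tau_i$ are independent torus characters. Tensoring with $N$ the comodule short exact sequences
\[
0\to L_G/(e_{\tau_1},\dots,e_{\tau_{k-1}})\xrightarrow{e_{\tau_k}}L_G/(e_{\tau_1},\dots,e_{\tau_{k-1}})\to L_G/(e_{\tau_1},\dots,e_{\tau_k})\to0
\]
preserves injectivity, which gives regularity on $N$.

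For (EU), I tensor with $N$ the two short exact sequences of comodules built from the invariant ideals $(\pi^*e_{H/K})$ and $(\pi^*d_{H/K})$ in $L_{T\times H,(p)}$. These ideals are invariant by \Cref{prop:d-invariant} for compact groups. This is verbatim the argument already given in the finite case.

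For (iv)$\Rightarrow$(iii), I prove that $M_H(N)=N_H\otimes_{L_H}\Gamma_H$ is flat over $L_H$ for every closed $H\le G$. The induction runs over closed subgroups, ordered lexicographically by $(\operatorname{rank}H,|\pi_0H|)$. Every $K\in\Max(H)$, and every kernel of a non-torsion character of $H$, is strictly smaller in this order.

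\emph{Base case.} For $H=1$ the claim is (LE).

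\emph{Inductive step.} Fix $p$ and work $p$-locally. Take the family $\{f_\alpha\}$ consisting of the $e_{H/K}$ for $K\in\Max(H)$, together with the Euler classes $e_\lambda$ for primitive non-torsion $\lambda\in H^\vee$. I will apply \Cref{lem:compact-infinite-gluing} to this family and check its three hypotheses in turn.

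\emph{Localization.} The multiplicative set $S$ generated by the family inverts every nontrivial Euler class. This needs an analogue of \Cref{lem:maximal-euler-suffices}. A nontrivial $\chi$ is either torsion, in which case some multiple of it has prime order and the divisibility argument applies. Or it is non-torsion, in which case $\chi=k\lambda$ with $\lambda$ primitive, and then $e_\lambda\mid e_\chi$. Hmm, that divisibility goes the wrong way for inverting $e_\chi$ from $e_\lambda$. So instead I will include all non-torsion $\lambda$ in the family, which is harmless since the family may be arbitrary. With that choice $S^{-1}L_H=\Phi^HL$, and $S^{-1}M_H$ is flat by (LE) and the classical LEFT, as in the finite case.

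\emph{Tor vanishing.} For $f_\alpha=e_{H/K}$, \Cref{lem:compact exactness condition}(i) and \Cref{lem:automatic-edge} give an exact pair on $N_{H,(p)}$. By flatness of $\Gamma_H$, the pair is also exact on $M_H$. \Cref{lem:tor-exact-pair} then kills $\Tor_{>0}$. For non-torsion $\lambda$, \Cref{lem:compact exactness condition}(ii) makes $e_\lambda$ regular on $N_H$, hence on $M_H$. Since $e_\lambda$ is also a non-zero-divisor on $L_H$, $\Tor_{>0}^{L_H}(L_H/e_\lambda,M_H)=0$.

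\emph{Quotients.} I have $M_H/f_\alpha M_H\cong M_{\ker}$ by \Cref{lem:closed-basechange}. This is flat over $L_{\ker}=L_H/f_\alpha$ by induction, once I check that $N_{\ker}$ still satisfies (RE), (EU) and (LE) relative to $\ker$. For (EU) and (RE) this is exactly what \Cref{lem:compact exactness condition} supplies. (LE) is inherited directly, because the condition quantifies over all closed $B\le G$.

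I expect the main obstacle to be making the induction hypothesis genuinely applicable. The theorem is stated for $N$ over $L_G$ with $G=T\times A$ split, but subgroups $\ker\lambda$ and their decompositions are not compatible with the chosen splitting. The cleanest fix is to prove, for a fixed $N$ over $L_G$, the statement ``$M_H(N)$ is flat for all closed $H\le G$''. Its hypotheses should be the conclusions of \Cref{lem:compact exactness condition} for all $H$, together with (LE), so that nothing depends on a splitting of $H$.
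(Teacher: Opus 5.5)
Your proposal is correct and follows the paper's proof essentially step for step. The shared steps are: the formal equivalences of (i)--(iii); base change along closed and open immersions together with the invariant-ideal short exact sequences for (iii)$\Rightarrow$(iv); and, for (iv)$\Rightarrow$(iii), a lexicographic induction on $(\dim H,|\pi_0H|)$ over closed subgroups $H\le G$ that applies \Cref{lem:compact-infinite-gluing} to the Euler classes of prime-order and non-torsion characters, which is exactly the paper's family $\mathcal E_H$. The differences are cosmetic: you derive (RE) directly from comodule exactness rather than from flatness of $M_G(N)$ plus faithful flatness of $\Gamma_G$, and you state explicitly that the induction proves flatness of $M_H(N)$ for all closed $H\le G$ using the conclusions of \Cref{lem:compact exactness condition}, which the paper's terse inductive step uses implicitly.
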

\begin{proof}
  The equivalence of \textup{(i)}, \textup{(ii)}, and \textup{(iii)} follows by the same argument as in the case of finite abelian groups.

  \textup{(iii)} $\Longrightarrow$ \textup{(iv)}: Assuming that $M_G(N)$ is flat over $L_G$, it suffices to verify condition $(\mathrm{RE})$, as the remaining two conditions follow by the same arguments as in the finite case. If a sequence $(e_1,\dots, e_r)$ of Euler classes is regular on $L_G$, then 
  \[
  e_{i+1}: L_G/(e_1,\dots, e_i)\to L_G/(e_1,\dots, e_i)
  \]
  is injective. Tensoring with $M_G(N)$ shows that $(e_1,\dots, e_r)$ is regular on $M_G(N)$ since $(e_i)$ is an invariant ideal. Then $(e_1,\dots, e_r)$ is regular on $N$ since $\Gamma_G$ is faithful flat over $L_G$.

  \textup{(iv)} $\Longrightarrow$ \textup{(iii)}: By \Cref{lem:compact exactness condition}, $(\mathrm{RE})+(\mathrm{EU})$ implies that for any closed subgroup $H\leq G$ and $K\in \Max (H)$, $(e_{H/K},d_{H/K})$ is exact on $N_H$. We now prove that $M_G(N)$ is flat over $L_G$ by induction on the pairs
  \[
  (\dim H,|\pi_0 H|),
  \]
  ordered lexicographically, where $H$ ranges over the closed subgroups of $G$. Every proper closed subgroup $K<H$ has a strictly smaller pair: if $\dim K=\dim H$, then $K^\circ=H^\circ$, and hence $|\pi_0 K|<|\pi_0 H|$. Let 
  \[
  \mathcal E_H := \{e_\tau: \tau\in H^\vee \text{ has infinite order or prime order.}\}
  \]
  Let $S_H$ be their multiplicative closure.  Then
  \[
  S_H^{-1}L_H=\Phi^HL.
  \]
  Indeed, only finite-order characters of composite order remain to be checked. If $\chi$ has order $n>1$, choose a prime $q\mid n$ and set
  \[
  \lambda=(n/q)\chi.
  \]
  Then $\lambda$ has order $q$, so $e_\lambda$ is inverted in $S_H^{-1}L_H$. Since $\lambda$ vanishes on $\ker(\chi)$,  $e_\lambda\in(e_\chi)$. Thus invertibility of $e_\lambda$ forces invertibility of $e_\chi$.

  The inductive step follows by applying \Cref{lem:compact-infinite-gluing} to $\mathcal E_H$, and the same arguments as in the finite case complete the proof.
\end{proof}

\begin{corollary}\label{cor:compact representation}
  Let $E$ be a complex oriented $G$-spectrum $E$. Then 
  \[
  E_*(X) \cong (MU_G)_*(X)\otimes_{L_G} \pi_*^G E
  \]
  if and only if $\pi_*^G E$ satisfies $(\mathrm{RE})+(\mathrm{EU})+ (\mathrm{LE})$, and for any $H\leq G$, $\pi_*^H E \cong L_H\otimes_{L_G} \pi_*^G E$.
\end{corollary}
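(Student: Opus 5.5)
We prove the two implications separately, mirroring \Cref{cor:spectra level representation} but with \Cref{thm:compact-LEFT} in place of \Cref{thm:finite equivariant-LEFT}. Throughout, a complex orientation of $E$ gives a ring map $MU_G\to E$. Hence we get a natural transformation
\[
\theta_X\colon (MU_G)_*(X)\otimes_{L_G}\pi_*^GE\longrightarrow E_*(X),
\]
defined by $x\otimes a\mapsto a\cdot\iota(x)$, where $\iota\colon (MU_G)_*(X)\to E_*(X)$ is induced by $MU_G\to E$. The statement should be read as asserting that $\theta$ is an isomorphism.

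\emph{Necessity.} Suppose $\theta$ is an isomorphism for all $X$. The right-hand side $E_*^G(-)$ is a homology theory on $\Sp^G$, so the functor $X\mapsto N\otimes_{L_G}MU^G_*(X)$ with $N=\pi_*^GE$ is a homology theory. By (i)$\Rightarrow$(iv) of \Cref{thm:compact-LEFT}, $N$ satisfies $(\mathrm{RE})+(\mathrm{EU})+(\mathrm{LE})$. For the second condition, evaluate at $X=\Sigma^\infty_+ G/H$. Then $E^G_*(G/H_+)\cong\pi_*^HE$ and $(MU_G)^G_*(G/H_+)\cong\pi^H_*MU_G\cong L_H$, the latter by Hausmann's theorem applied to $H$. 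Naturality of $\theta$ identifies the resulting isomorphism with $L_H\otimes_{L_G}\pi_*^GE\cong\pi_*^HE$.

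\emph{Sufficiency.} Suppose $N=\pi_*^GE$ satisfies the three conditions. By \Cref{thm:compact-LEFT}, $h_N(X)=(MU_G)_*(X)\otimes_{L_G}N$ is a homology theory. Then $\theta$ is a natural transformation of homology theories on $\Sp^G$, compatible with suspension isomorphisms and long exact sequences, since $\iota$ is induced by a map of spectra. Its value on the generators $\Sigma^{n}G/H_+$ (for $n\in\mathbb Z$ and $H\leq G$ closed) is the composite
\[
L_H\otimes_{L_G}N\longrightarrow\pi_*^HE,
\]
which is an isomorphism by hypothesis. The orbits $G/H_+$ generate $\Sp^G$ as a localizing subcategory, and both theories commute with arbitrary wedges. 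Hence the full subcategory of $X$ on which $\theta_X$ is an isomorphism is localizing: it is closed under cofibers by the five lemma and under sums. So it is all of $\Sp^G$.

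\emph{Main obstacle.} The delicate step is checking that the map $L_H\otimes_{L_G}N\to\pi_*^HE$ appearing in the hypothesis is the same as $\theta_{G/H_+}$, so that the abstract isomorphism can be upgraded to naturality. We handle this by interpreting the hypothesis as asserting that the canonical restriction-induced map is an isomorphism. The identification $(MU_G)^G_*(G/H_+)\cong L_H$ via restriction $L_G\to L_H$ then makes the compatibility a routine diagram chase using the multiplicativity of $MU_G\to E$.

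As an alternative to the localizing-subcategory argument, one could use \Cref{cor:representation} (extended to compact $G$) to represent $h_N$ by a spectrum $E_N$ and compare $E_N$ with $E$ via the orientation. We prefer the direct argument because it avoids Brown representability.
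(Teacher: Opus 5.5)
Your argument is correct, but it takes a different route from the paper. The paper gives no separate proof of the compact case. Its finite analogue, \Cref{cor:spectra level representation}, is deduced in one line from the LEFT together with \Cref{cor:representation}: Brown representability produces a spectrum $E_N$ representing $h_N$ with $\pi^H_*E_N\cong L_H\otimes_{L_G}N$. How the given $E$ is then compared with $E_N$ is left implicit.

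You never build a second spectrum. The orientation $MU_G\to E$ supplies the comparison transformation $\theta$ directly. Checking $\theta$ on the generators $\Sigma^nG/H_+$ and closing up under the localizing subcategory replaces Brown representability.

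Your route has two advantages. It makes precise how the coefficient hypothesis must be read: the canonical map $L_H\otimes_{L_G}\pi^G_*E\to\pi^H_*E$ is the isomorphism, a point the statement leaves ambiguous. It also supplies the comparison step the paper omits. The paper's route gives something yours does not: a representing spectrum for every $N$ satisfying $(\mathrm{RE})+(\mathrm{EU})+(\mathrm{LE})$, which is more than this corollary needs.

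One small correction. When $d=\dim(G/H)>0$, the Wirthmüller isomorphism gives $E^G_n(G/H_+)\cong\pi^H_{n-d}E$ and $(MU_G)^G_n(G/H_+)\cong(L_H)_{n-d}$, not unshifted identifications. The tangent representation of $G/H$ is trivial because $G$ is abelian, so this is an integer shift. The same shift appears on both sides and you use all $\Sigma^nG/H_+$ anyway, so neither direction of your argument is affected.
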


\begin{corollary}[from nonequivariant to equivariant]\label{cor:ordinary-implies-equivariant}
  Let $M$ be a nonequivariant Landweber exact $L$-module. Then $L_G\otimes_L M$ is $G$-equivariant Landweber exact. Consequently
  \[
  X\longmapsto MU_*^G(X)\tensor_LM
  \]
  is a homology theory on genuine $A$-spectra.
\end{corollary}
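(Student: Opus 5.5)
The most direct route bypasses the criterion (iv) of \Cref{thm:compact-LEFT} and checks condition (iii) for $N=L_G\otimes_L M$ directly. Then \Cref{thm:first equivalent conditions} (or the equivalence (i)$\Leftrightarrow$(iii) in \Cref{thm:compact-LEFT}) gives the homology theory statement. The plan is to show that
\[
M_G(N)=(L_G\otimes_L M)\otimes_{L_G,\eta_L}\Gamma_G\cong M\otimes_{L}\Gamma_G
\]
is flat over $L_G$ through $\eta_R$. Here $\Gamma_G$ is viewed as an $L$-module via $L\to L_G\xrightarrow{\eta_L}\Gamma_G$, where $L\to L_G$ is the map classifying the inflation (pullback along $G\to 1$) of the universal formal group law; equivalently it is $\pi_*$ of the unit $MU\to MU_G$ composed with inflation.

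\textbf{Step 1: a Hopf algebroid map.} The key input is a morphism of flat Hopf algebroids $(L,\Gamma)\to(L_G,\Gamma_G)$ inducing the map of stacks $\MFG^G\to\MFG$ that sends $\mathbb G$ to its underlying non-equivariant formal group (the completion at the identity section, i.e.\ corestriction along $1\to G$ composed with the identification $\MFG^1=\MFG$). Write $f\colon\Spec L_G\to\MFG$ for the composite. A standard computation with flat Hopf algebroids identifies
\[
\Spec(M\otimes_L\Gamma_G)\;\cong\;\Spec(M)\times_{\MFG}\Spec\Gamma_G\;\to\;\Spec\Gamma_G\xrightarrow{t}\Spec L_G,
\]
at least after passing to quasi-coherent sheaves. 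Concretely, $M\otimes_{L,\eta_L}\Gamma_G\cong (M\otimes_{L,\eta_L}\Gamma)\otimes_{L,\eta_R}\Gamma_G$ up to the canonical isomorphism arising from the comodule structure on $\Gamma_G$. I will try to make this precise via Hovey's characterization: a comodule map $\Gamma\otimes_L\Gamma_G\cong\Gamma_G\otimes_L\Gamma$-type twist, using the strict isomorphisms in $\Gamma_G$ to move the $L$-structure from the left unit to the right unit.

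\textbf{Step 2: conclude flatness.} Once this identification holds, flatness follows. The functor $M\otimes_L(-)$ is exact on $(L,\Gamma)$-comodules by classical Landweber exactness, and $\Gamma_G$, with its right $L_G$-module structure, is a filtered colimit-friendly object. Concretely, $\Gamma_G$ is an $(L,\Gamma)$-comodule via the Hopf algebroid map. Moreover, for any injection $P\hookrightarrow P'$ of $L_G$-modules, the map $\Gamma_G\otimes_{L_G,\eta_R}P\to\Gamma_G\otimes_{L_G,\eta_R}P'$ is an injection of $(L,\Gamma)$-comodules, because $\Gamma_G$ is $\eta_R$-flat and the coaction lives on the left factor. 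Applying $M\otimes_L(-)$ preserves this injection, which is exactly $\eta_R$-flatness of $M\otimes_L\Gamma_G$.

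\textbf{Main obstacle.} I expect Step 1 to be the main obstacle: checking that the left $\Gamma_G$-coaction induces a genuine $(L,\Gamma)$-comodule structure on $\Gamma_G\otimes_{L_G}P$ compatible with the $\eta_L$-module structure used in $M\otimes_L-$. If this becomes messy, the fallback is to verify (iv) of \Cref{thm:compact-LEFT} directly. By \cite[Proposition 2.25]{hausmann2023invariant}, $\Phi^BN_B$ is a polynomial-Laurent extension of $M$, so (LE) is immediate. For (RE) and (EU), one would show that $L_H$ is flat over $L$ for every $H$; this is plausible via the free-module description of $L_G$ over $L$ from Hausmann's work. Then tensoring the regular sequences in $L_G$ and the exact pairs on $L_H$ from \Cref{prop:universal-edge} with the flat $L$-module $M$ (exactness of $M\otimes_L-$ on $L$-modules is not guaranteed, so this step requires flatness of $M$ or of $L_H$ appropriately) preserves them. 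Because this fallback needs flatness beyond Landweber exactness, the comodule argument is preferred.
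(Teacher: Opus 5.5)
Your preferred route has a genuine gap, and the fallback you set aside is the paper's proof and does work.

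\textbf{The gap.} Your route breaks at the sentence in Step~2 asserting that $\Gamma_G$ (hence $\Gamma_G\otimes_{L_G,\eta_R}P$) is an $(L,\Gamma)$-comodule ``via the Hopf algebroid map.''

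A morphism $(L,\Gamma)\to(L_G,\Gamma_G)$ only transports comodules forward, $C\mapsto L_G\otimes_L C$; this is pullback of sheaves along $\MFG^G\to\MFG$. To regard an $(L_G,\Gamma_G)$-comodule $D$ as an $(L,\Gamma)$-comodule you would need a coaction $D\to\Gamma\otimes_L D$. The morphism only supplies $\Gamma\otimes_L D\to\Gamma_G\otimes_{L_G}D$, which points the wrong way.

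There is also no natural substitute. A coaction on $\Gamma_G$ compatible with its ring structure would be a rule extending a strict change of coordinate on the identity completion $\widehat X_\epsilon$ to a strict change of the global coordinate $y_\epsilon$ on $X$. Such extensions are not unique: over $\MFG^{G/G}$, where $X\cong\coprod_\alpha\widehat X_\alpha$, you may rescale by arbitrary units on the components $\alpha\neq\epsilon$. The naive extension $\theta(y_\epsilon)$ need not converge $I$-adically.

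Step~1 has the same defect: it conflates $\times_{\MFG}$ with $\times_{\Spec L}$. For an $L$-algebra $R$,
$\Spec R\times_{\MFG}\Spec\Gamma_G=\Spec(R\otimes_L\Gamma\otimes_L\Gamma_G)$, not $\Spec(R\otimes_L\Gamma_G)$; the two already differ for $G=1$. Flatness of $\Spec R\times_{\MFG}\Spec L_G\to\Spec L_G$ is automatic from Landweber exactness. It says nothing about flatness of $\Spec(L_G\otimes_L R)\to\MFG^G$, which is what condition (iii) of \Cref{thm:compact-LEFT} asks for.

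\textbf{Why the fallback works.} Your stated reason for dismissing it is mistaken: it needs no flatness of $M$. A short exact sequence of $L$-modules stays exact after $-\otimes_L M$ for \emph{every} $M$ once its cokernel is flat, since then $\Tor_1^L(\text{cokernel},M)=0$. If the cokernel is free, the sequence even splits.

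The input you called ``plausible'' is exactly what is needed: $L_H$ is free over $L$ for every closed $H\le G$ (Comeza\~na's theorem; the paper uses it as ``$Q_{i+1}$ is free over $L$'').

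For (RE), $Q_{i+1}=L_G/(e_{\tau_1},\dots,e_{\tau_{i+1}})\cong L_{H}$ with $H=\bigcap_j\ker\tau_j$. Moreover $e_{\tau_{i+1}}$ is a non-zero divisor on $Q_i$, because $\tau_{i+1}$ stays non-torsion on $\bigcap_{j\le i}\ker\tau_j$. So $0\to Q_i\xrightarrow{e_{\tau_{i+1}}}Q_i\to Q_{i+1}\to 0$ is exact and splits over $L$.

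For (EU), the two-periodic complex of $(e,d)=(e_{H/K},d_{H/K})$ on $L_{T\times H}$ breaks into two short exact sequences:
$0\to eL_{T\times H}\to L_{T\times H}\to L_{T\times K}\to 0$ and $0\to dL_{T\times H}\to L_{T\times H}\to eL_{T\times H}\to 0$.
Their cokernels are free and projective respectively, so the complex is split exact over $L$. It therefore stays exact on $L_{T\times H}\otimes_L M$.

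(LE) follows as you say, from $\Phi^B N_B\cong\Phi^B L\otimes_L M$ with $\Phi^B L$ polynomial-Laurent over $L$. \Cref{thm:compact-LEFT} then finishes the proof.
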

\begin{proof}
  It suffices to verify the conditions $(\mathrm{RE})+(\mathrm{EU})+ (\mathrm{LE})$.

  The condition ($\mathrm{RE}$): Let $\tau_1,\dots,\tau_r\in T^\vee$ be linearly independent. Then $(e_{\tau_1},\dots,e_{\tau_r})$ is a regular sequence on $L_G$. For $0\leq i\leq r$, put
  \[
  Q_i=L_G/(e_{\tau_1},\dots,e_{\tau_i}), \qquad Q_0=L_G.
  \]
  For each $0\leq i<r$, the short exact sequence
  \[
  0\longrightarrow Q_i \xrightarrow{\,e_{\tau_{i+1}}\,} Q_i \longrightarrow Q_{i+1}\longrightarrow 0
  \]
  splits over $L$, as its cokernel $Q_{i+1}$ is free over $L$. Consequently, it remains exact after tensoring over $L$ with $M$. Under the natural identification
  \[
  Q_i\otimes_L M \cong (L_G\otimes_L M)/(e_{\tau_1},\dots,e_{\tau_i}),
  \]
  the resulting map is multiplication by $e_{\tau_{i+1}}$. Thus $(e_{\tau_1},\dots,e_{\tau_r})$ is a regular sequence on $L_G\otimes_L M$.

  The condition ($\mathrm{EU}$): For any $K\lessdotp H\leq\pi_0G$, the pair $(e,d)=(e_{H/K},d_{H/K})$ is exact on $L_{G^\circ\times H}$; equivalently, the complex
  \[
  \cdots \xrightarrow{\,e\,} L_{G^\circ\times H} \xrightarrow{\,d\,} L_{G^\circ\times H} \xrightarrow{\,e\,} L_{G^\circ\times H} \xrightarrow{\,d\,} \cdots
  \]
  is exact. This complex is split exact over $L$ and therefore remains exact after tensoring over $L$ with $M$. It follows that $(e,d)$ is an exact pair on $L_{G^\circ\times H}\otimes_L M$.

  The condition $(\mathrm{LE})$ follows from the Landweber exactness of $M$ and the flatness of $\Phi^H L$ over $L$ for every closed subgroup $H$.
\end{proof}

As a result, we have the following proposition, which provides a construction of equivariant Landweber exact spectra from non-equivariant ones. This resolves \cite[Conjecture 2.18]{wisdom2024properties}.
\begin{proposition}\label{prop:conjecture-in-Wisdom}
  If $E$ is a Landweber exact spectrum, then $MU_A \wedge_{MU} E$ is an $G$-Landweber exact spectrum.
\end{proposition}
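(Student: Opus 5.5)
The plan is to reduce the statement to \Cref{cor:ordinary-implies-equivariant} and \Cref{cor:compact representation}. Put $E_G:=MU_G\wedge_{MU}E$, where $MU_G$ is regarded as an $MU$-module through the inflation of the nonequivariant orientation. The group $G$ is taken to be abelian compact Lie; the notation $MU_A$ in the statement is read as $MU_G$. Set $M=\pi_*E=MU_*\otimes_L\,$-Landweber exact, i.e.\ $M=\pi_*E\cong MU_*\otimes_{MU_*}\pi_*E$, which is a Landweber exact $L$-module.

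The first step is to compute the equivariant homotopy of $E_G$ with a Künneth argument. For any genuine $G$-spectrum $X$ we have $(E_G)_*^G(X)=\pi_*^G(MU_G\wedge X\wedge_{MU}E)$. The functor
\[
Y\longmapsto \pi_*^G(MU_G\wedge X)\otimes_{L}\pi_*E
\]
of the $MU$-module $Y$ (here $MU_G\wedge X$ is viewed as an $MU$-module and we form $(MU_G\wedge X)\wedge_{MU}Y$) is a homology theory on $MU$-modules, because $\pi_*E$ is Landweber exact and hence flat over the Hopf algebroid $(L,\Gamma)$. More precisely, I would write $E$ as a filtered colimit of finite free $MU$-modules using Lazard's theorem applied to the flat comodule $MU_*E$. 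The cleanest approach is to observe that $E\simeq MU\wedge_{MU}E$ and that Landweber exactness gives a natural isomorphism $\pi_*(Z\wedge_{MU}E)\cong\pi_*Z\otimes_{L}M$ for every $MU$-module $Z$ whose homotopy is an $(L,\Gamma)$-comodule, via $Z\wedge_{MU}E\simeq Z\wedge_{MU}MU\wedge E$. Applying this equivariantly to the geometric fixed points, or directly to the categorical $G$-fixed points of $MU_G\wedge X$, which form an $MU$-module, gives
\[
(E_G)_*^G(X)\cong MU_*^G(X)\otimes_{L}M\cong MU_*^G(X)\otimes_{L_G}(L_G\otimes_L M).
\]
Taking $X=G/H_+$ then gives $\pi_*^HE_G\cong L_H\otimes_L M\cong L_H\otimes_{L_G}(L_G\otimes_LM)$, which is the restriction compatibility required in \Cref{cor:compact representation}.

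The second step uses \Cref{cor:ordinary-implies-equivariant}, which says $L_G\otimes_LM$ satisfies $(\mathrm{RE})+(\mathrm{EU})+(\mathrm{LE})$. Together with the first step, \Cref{cor:compact representation} then yields that $E_G$ is $G$-Landweber exact. The spectrum $E_G$ is complex oriented through the unit $MU_G\to E_G$, so that corollary applies.

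The main obstacle I expect is making the Künneth identification of the first step rigorous. The difficulty is that $\pi_*(Z\wedge_{MU}E)\cong\pi_*Z\otimes_{MU_*}\pi_*E$ for an arbitrary $MU$-module $Z$ is not automatic from Landweber exactness alone. The natural route is to note that $Z\mapsto\pi_*Z\otimes_{MU_*}M$ is a homology theory on $MU$-modules. This holds because $\pi_*(Z)\otimes_{MU_*}M\cong\pi_*(Z\wedge_{MU}MU\wedge MU)\otimes\ldots$ reduces via $MU_*MU$-comodule exactness, and the theory agrees with $\pi_*(Z\wedge_{MU}E)$ on $Z=MU$, with the natural transformation between them existing. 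Both sides are homology theories on $\Mod_{MU}$ agreeing on the generator, so they agree. Applying this to $Z=(MU_G\wedge X)^G$, and using that fixed points commute with $-\wedge_{MU}E$ since $E$ carries a trivial action and is a colimit of cells, completes the argument.
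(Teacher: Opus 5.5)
Your reduction is the same as the paper's: compute $\pi_*^H(MU_G\wedge_{MU}E)\cong L_H\otimes_L\pi_*E$ for every closed $H\leq G$, then combine \Cref{cor:ordinary-implies-equivariant} with \Cref{cor:compact representation}. The paper gets the coefficient computation by citing May \cite[Theorem 1.1]{may1998equivariant}. The gap is in the argument you substitute for that citation.

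\textbf{The failing claim.} You assert that $Z\mapsto \pi_*Z\otimes_{MU_*}M$ is a homology theory on all $MU$-modules, and this is false. Landweber exactness makes $-\otimes_{MU_*}M$ exact only on $(L,\Gamma)$-comodules, not on all $MU_*$-modules. The homotopy of an $MU$-module need not carry a compatible comodule structure. Concretely, take $M=KU_*$ via the Todd genus and $x=[\mathbb{CP}^1]^2-[\mathbb{CP}^2]$, which is nonzero in $MU_4$ and has Todd genus $0$. For $Z=MU/x$ one has $\pi_*Z\otimes_{MU_*}KU_*\cong KU_*$, concentrated in even degrees. On the other hand, $Z\wedge_{MU}KU$ is the cofiber of $x\colon\Sigma^4KU\to KU$, which is zero on homotopy, so it has nonzero odd homotopy. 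Hence ``two homology theories on $\Mod_{MU}$ agreeing on $MU$'' is not available. Your qualified version, for $Z$ whose homotopy is an $(L,\Gamma)$-comodule, is neither proved nor checked for $Z=(MU_G\wedge X)^G$.

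\textbf{Step 1 for all $X$ is the whole proposition.} Your Step 1 for arbitrary $X$, namely $(E_G)_*^G(X)\cong MU_*^G(X)\otimes_LM$, is literally $G$-Landweber exactness of $E_G$ once $\pi_*^GE_G\cong L_G\otimes_LM$. If it followed formally, your Step 2 would be superfluous. This is exactly where the content of Wisdom's conjecture sits, so it cannot come from a formal K\"unneth argument.

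\textbf{The repair.} You only need the case $X=G/H_+$, and there a correct argument is easy. For abelian $H$, $\pi_*\bigl((MU_H)^H\bigr)=L_H$ is a free $L$-module by a theorem of Comeza\~na; the paper itself uses this freeness in the proof of \Cref{cor:ordinary-implies-equivariant}. So the K\"unneth spectral sequence
\[
\Tor^{L}_{*,*}(L_H,\pi_*E)\Rightarrow\pi_*\bigl((MU_H)^H\wedge_{MU}E\bigr)
\]
collapses to $L_H\otimes_L\pi_*E$, with no use of Landweber exactness at this stage. Your remark that categorical fixed points commute with $-\wedge_{MU}E$ for inflated $E$ is fine. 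Alternatively, cite May's theorem as the paper does. With that repair, your argument coincides with the paper's.
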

\begin{proof}
  According to \cite[Theorem 1.1]{may1998equivariant}, for all closed subgroup $H\leq G$, 
  $$\pi_*^H MU_G\wedge_{MU} E \cong \pi_*^H MU_H\wedge_{MU} E \cong L_H\otimes_L \pi_*E \cong L_H\otimes_{L_G} (L_G\otimes_L \pi_*E).$$
  Then the proposition follows by \Cref{cor:ordinary-implies-equivariant}.
\end{proof}

\bibliographystyle{alpha}
\bibliography{homotopy.bib}
\end{document}